\newtheorem{thm}{Theorem}[section]
\newtheorem{prop}[thm]{Proposition}
\newtheorem{lemma}[thm]{Lemma}
\newtheorem{cor}[thm]{Corollary}
\theoremstyle{definition}
\newtheorem{example}[thm]{Example}
\newtheorem{definition}[thm]{Definition}
\newtheorem{assumption}[thm]{Assumption}
\newtheorem{rem}[thm]{Remark}
\numberwithin{equation}{section}
\newcommand{\N}{\mathbb{N}}
\newcommand{\R}{\mathbb{R}}
\newcommand{\T}{\mathbb{T}}
\newcommand{\norm}[1]{\left\Vert #1 \right\Vert}
\newcommand{\E}{{\mathbb E}}
\def\tilde{\widetilde}
\begin{document}
\title{Large Noise in Variational Regularization }
\author{Martin Burger\thanks{Institut f\"ur Numerische und Angewandte Mathematik, Westf\"alische Wilhelms-Universit\"at (WWU) M\"unster. Einsteinstr. 62, D 48149 M\"unster, Germany. e-mail: martin.burger@wwu.de } \and Tapio Helin\thanks{Department of Mathematics and Statistics, University of Helsinki (UH). P.O. Box 68 (Gustaf H\"{a}llstr\"{o}min katu 2b)
FI-00014 University of Helsinki, Finland. e-mail: tapio.helin@helsinki.fi } \and Hanne Kekkonen \thanks{Cantab Capital institute, Faculty of Mathematics, University of Cambridge (UC), Wilberforce Road, Cambridge CB3 0WA, United Kingdom. e-mail: hnk22@cam.ac.uk}}
\maketitle
\begin{abstract}    
In this paper we consider variational regularization methods for inverse problems with large noise that is in general unbounded in the image space of the forward operator. We introduce a Banach space setting that allows to define a reasonable notion of solutions for more general noise in a larger space provided one has sufficient mapping properties of the forward operators.

A key observation, which guides us through the subsequent analysis, is that such a general noise model can be understood with the same setting as approximate source conditions (while a standard model of bounded noise is related directly to classical source conditions). Based on this insight we obtain a quite general existence result for regularized variational problems and derive error estimates in terms of Bregman distances. The latter are specialized for the particularly important cases of one- and $p$-homogeneous regularization functionals.

As a natural further step we study stochastic noise models and in particular white noise, for which we derive error estimates in terms of the expectation of the Bregman distance. The finiteness of certain expectations leads to a novel class of abstract smoothness conditions on the forward operator, which can be easily interpreted in the Hilbert space case. We finally exemplify the approach and in particular the conditions for popular examples of regularization functionals given by squared norm, Besov norm and total variation, respectively.

{\bf Keywords: }  Variational Regularization, Error Estimates, Large Noise, White Noise, Bregman Distances 
%
\end{abstract}

\newpage
\tableofcontents

\section{Introduction}
Motivated by stochastic modelling of noise, in particular white noise, the treatment of inverse problems with large noise has received strong attention recently \cite{egger,eggermont,kekkonen14,kekkonen15,mathe}. In this case large noise means that the norm of the data perturbation introduced by the noise is not small or might be even unbounded in the image space of the forward operator. 
Recently several papers have tackled such problems in the setting of linear regularization methods (corresponding to quadratic variational regularization), but also in those approaches some points were restrictive. The work by Eggermont et al. \cite{eggermont} assumes noise potentially large in the image space of the forward operator, but still being an element of this space. This allows to gain some insight, but still excludes white noise, where the latter condition is satisfied with probability zero. Also some difficulties related to the appropriate formulation of the regularized problem with white noise are not appearing in this way. Another line of research restricts to inverse problems with special settings of function spaces, namely some Sobolev spaces \cite{kekkonen14,kekkonen15} or Hilbert scales \cite{MairRuy96,MatheHof08,MathePer03,MatheTau06}. In these works estimates are obtained in weaker norms however and the setting still partly shadows the general structure.

In this paper we directly tackle the issue of large noise variational regularization with convex regularization functionals in Banach spaces. We derive a rather general theory that can be adapted to special homogeneity properties of the regularization functional, in particular to quadratic (Tikhonov) and one-homogeneous regularizations as popularized via total variation methods \cite{rudin1992nonlinear,tvzoo} and sparsity (see e.g. \cite{bruckstein2009sparse,daub07,RamlauTes06}). We consider the linear ill-posed problem
\begin{equation}
	\label{eq:main_eq}
	Ku = f,
\end{equation}
for a continuous linear operator $K: X \rightarrow Y$, where
$X$ and $Y$ are separable Banach and Hilbert space, respectively. 
For our setting of the noise let $(Z, Y, Z^*)$ be a Gelfand triple such that $Z\subset Y$ is a dense subspace with Banach structure and the dual pairing of $Z$ and $Z^*$ is compatible with the inner product of $Y$, i.e., by identifying $Y=Y^*$ we have
\begin{equation*}
	\langle u,v\rangle_{Z\times Z^*} = \langle u,v\rangle_Y
\end{equation*}
whenever $u\in Z \subset Y$ and $v\in Y = Y^* \subset Z^*$. The key assumption we make is that $K:  X \to Z$ is continuous. It directly follows that $K^*$ has a continuous extension $K^* : Z^* \to X^*$.
The noisy data are given by 
\begin{align}\label{eq:main_noisy}
f^\delta= Ku^\dagger + \delta n,
\end{align}
where $n \in Z^*$ and $\delta>0$ models the noise level.
Notice carefully that $f^\delta \in Z^*$ can be unbounded in the norm of $Y$, which yields our setting of large noise. It is crucial that due to the continuous extension property $K^* n$ is bounded in $X^*$.

As usual in variational methods we obtain a regularized solution of \eqref{eq:main_noisy} by computing a minimizer $u_{\alpha}^\delta$ of a weighted sum of the square residual (in the norm of $Y$) and the regularization functional. However, since the (squared) norm of $f^\delta$ is not necessarily finite, it is more appropriate to consider an expansion of the square residual \cite{kekkonen14,kekkonen15} and compute $u_\alpha^\delta$ as a minimizer of
\begin{equation} \label{minimisation}
	J^\delta_\alpha(u) = \frac{1}2 \Vert Ku \Vert_Y^2 - \langle K u, f^\delta \rangle_{Z\times Z^*} + \alpha R(u)
\end{equation}
with a convex regularization functional $R:X \rightarrow \mathbb{R} \cup \{\infty\}$. 

Our main assumptions on $R$ in addition to convexity are
\begin{itemize}
\item[(R1)] the functional $R$ is lower semicontinuous in some topology $\tau$ on X,
\item[(R2)] the sub-level sets $M_\rho = \{ R\leq \rho\}$ are sequentially compact in the topology $\tau$ on $X$ and
\item[(R3)] the convex conjugate $R^\star$ is finite on a ball in $X^*$ centered at zero.
\end{itemize}
The first two are the standard conditions needed for existence proofs and as we shall see below together with (R3) they will also lead to a general existence result for minimizers of $J_\alpha^\delta$ in the case of positive $\alpha$. Note that we assume that $K: X \rightarrow Z$ is continuous in $\tau$ topology. A standard example is $R$ being a power of a norm in a Banach space possessing a predual space. In this case the Banach--Alaoglu theorem yields compactness in the weak-star topology, for which we have genuine lower semicontinuity of the norm. We mention that a major difference to the case of bounded noise is that there is no natural lower bound for $J_\alpha^\delta$ (the lower bound in the case of bounded noise is $-\frac{1}2 \Vert f^\delta \Vert_Y^2 + \alpha R(u_0)$, with $u_0$ being a minimizer of $R$), which is the only complication in the analysis below and needs a suitable approximation of the noise together with (R3). To make some results below more accessible we will further employ the symmetry condition
\begin{itemize}
\item[(R4)] $R(-u)=R(u)$ for all $u \in X$,
\end{itemize}
which is however not essential for the overall line of arguments. 

Our key observation is related to error estimates between $u_\alpha^\delta$ and a solution $u^\dagger$ minimizing $R$ among all possible solutions of $Ku=f$. The usual way to obtain such is starting from the optimality condition for a minimizer  
\begin{equation}\label{optimality}
	K^* ( K u^\delta_\alpha - f^\delta) + \alpha \mu_\alpha^\delta = 0, \qquad \mu_\alpha^\delta \in \partial R(u^\delta_\alpha),
\end{equation}
where 
$$\partial R(u) = \{\mu \in X^* \; | \; R(u) - R(v) \leq \langle \mu, u-v\rangle_{X^*\times X}\; \textrm{for all}\; v\in X\}$$ 
stands for the subdifferential.
Next, the form (\ref{eq:main_noisy}) of $f^\delta$ is inserted and multiples of a subgradient $\mu^\dagger \in \partial R(u^\dagger)$ are added on both sides to arrive at
\begin{align}\label{optimalityrewritten}
 K^*K(u^\delta_\alpha - u^\dagger) + \alpha (\mu_\alpha^\delta - \mu^\dagger) = \delta \eta - \alpha \mu^\dagger, 
 \end{align}
where $\eta=K^*n \in X^*$. The following step is to take a duality product with $u_\alpha^\delta - u^\dagger$ and hence derive error estimates in the Bregman distance \cite{bregman,burgerbregman}. In doing so one can strongly benefit if $\mu^\dagger$ satisfies a source condition, i.e., if $\mu^\dagger = K^* w^\dagger$ for some $w^\dagger \in Y$. Note that in the bounded noise model also $\eta$ satisfies such a condition, which becomes violated in our setting. Since $\eta$ and $\mu^\dagger$ appear in a similar fashion on the right-hand side we see that the unboundedness of the noise in $Y$ leads to a similar technical issue as the violation of the source condition for $\mu^\dagger$. However, the latter is reasonably well understood and has been tackled by the concept of distance functions and approximate source conditions \cite{hein2008convergence,hofmann2006approximate,hofmann2005convergence,schusterbuch}, which are related to the growth rate of $\Vert w^\dagger \Vert_Y$ as $K^*w^\dagger$ approximates $\mu^\dagger$. Due to the analogous role of $\mu^\dagger$ and $\eta$ it is natural to use the same paradigm for approximating the large noise and this is the basic foundation of the analysis in this paper.

Following this idea our key contribution is to 
derive Bregman distance based error estimates between $u^\delta_\alpha$ and $u^\dagger$ for a general convex $R$. Given deterministic noise model, one can derive explicit converge rate results given (a variation of) an approximate source condition on $\mu^\dagger$ and $\eta$. In this paper, 
we prove convergence rates for the special cases of $1$-homogeneous $R(u)=\norm{u}_X$ as well as the $p$-homogeneous $R(u) = \frac 1p \norm{u}^p_X$ for $1<p<\infty$.

For our main motivation, random noise, the approximate source condition needs to be reconsidered in a statistical framework. In this work our interest lies in the frequentist risk between estimator $U^\delta_\alpha = U^\delta_\alpha(\omega)$ and the true unknown $u^\dagger$.
In such paradigm we find that the expected decay rate of the approximate source condition of the noise term is sufficient to guarantee a convergence rate result. Here we study and derive the convergence rate of frequentist risk for three examples: quadratic Tikhonov regularization, Besov norm regularization and total variation regularization. As for the noise we assume the canonical Gaussian white noise model on the Gelfand triplet $(Z,Y,Z^*)$ that has the well-known property that $n$ is almost surely unbounded in $Y$.

Let us shortly discuss some earlier work. After introducing the idea in \cite{burgerosher}, Bregman distances have been frequently used as an error measure for studying convergence rates of regularized solutions in Banach spaces. Convergence rates for the Bregman distance were further developed in e.g. \cite{benning2011error,bur07,grasmair2011linear,hofmann2007convergence,kindermannconvex,lorenz2008convergence,resmerita05,resmerita06}.
Iterative regularization based on Bregman distances were analysed e.g. in \cite{bur07,osh-bur-gol-xu-yin}. The literature on regularization theory in Banach spaces is quite extensive, but throughout the paper we often refer to an excellent textbook on the topic \cite{schusterbuch}. For a recent discussion of Bregman distances we refer to \cite{burgerbregman}. 

The general approach in statistical literature for solving frequentist inverse problems with white noise is typically based  on  obtaining  a  singular  value  decomposition  (SVD)  of  the  forward operator $K$ and then constructing a procedure based on spectral regularization,  see,  e.g. \cite{Abramovich1998, Cavalier2008, Cohen2004, Donoho1995, Goldenshluger2000, Kerkyacharian2000, Knapik2011}. However, in general inverse problems settings the SVD can rarely be computed analytically. Hence our approach which does not require the identification of the SVD basis of $K$ can be applied for wider range of inverse problems. 

The remainder of the paper is organized as follows: in Section \ref{sec:general_estimates} we consider the theory for general convex functional $R$. Main results of this section include the proof of existence of $u^\delta_\alpha$ as well as a related a-priori estimate in Section 2.1. The general error estimates are given in Section 2.4. In Section \ref{sec:convergence_rates} we derive convergence rates for different homogeneous examples of $R$. Next, we turn our focus on random noise in Section \ref{sec:random_noise} and consider examples of regularization by a quadratic Tikhonov functional (Section 4.2), Besov norm  (Section 4.3) and total variation functional (Section 4.4). Finally, we give an outlook to applications of our work to Bayesian inference in Section \ref{sec:outlook}.

\section{General Estimates}
\label{sec:general_estimates}

In the following we discuss the general approach for variational regularization under the assumptions above. We start by establishing the existence of a minimizer of $J_\alpha^\delta$ for $\alpha > 0$, which also yields some a-priori bounds for the solution.

\subsection{Existence and a-priori Estimates}
 
For general noise the existence of $J^\delta_\alpha$ is not clear from standard arguments. While usual lower semicontinuity arguments remains unchanged, the key issue is compactness, which follows from an a-priori estimate on $R$ due to the compactness of sublevel sets. In deriving such an estimate we need to bypass the missing lower bound of $J^\delta_\alpha$.

\begin{prop}\label{minestimate}  
Let $R$ satisfy the assumptions (R1)-(R4), then the functional $J^\delta_\alpha$ has a minimizer. Moreover, any such minimizer $u^\delta_\alpha$ satisfies
\begin{equation}
	R(u^\delta_\alpha) \leq \frac{1+\gamma}{1-\gamma} R(u^\dagger) + \frac{\delta^2}{2\alpha(1-\gamma)}  \Vert w \Vert_Y^2 +\frac{2\gamma}{1-\gamma}R^\star\left(\frac{\delta}{\alpha \gamma} (K^*w-\eta)\right)
\label{aprioriest}
\end{equation}
for any $\gamma \in (0,1)$ and $w \in Y$. Above $\eta=K^*n$.
\end{prop}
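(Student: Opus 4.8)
The plan is to establish the a-priori estimate first and deduce existence from it, since the estimate is exactly what supplies the compactness that the standard direct-method argument is missing.

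The plan is to prove the a-priori estimate first and to read off existence from it, since the estimate is exactly the compactness input that the direct method is missing. First I would insert $f^\delta = Ku^\dagger + \delta n$ into \eqref{minimisation} and complete the square, using $\langle Ku,n\rangle_{Z\times Z^*} = \langle u,\eta\rangle_{X^*\times X}$ with $\eta = K^*n$, to rewrite
\[
J^\delta_\alpha(u) = \tfrac12\norm{K(u-u^\dagger)}_Y^2 - \tfrac12\norm{Ku^\dagger}_Y^2 - \delta\langle u,\eta\rangle + \alpha R(u).
\]
The point of this form is that the only genuinely troublesome term is the linear pairing $-\delta\langle u,\eta\rangle$, whose unboundedness is responsible for the absence of a lower bound.

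For the estimate proper I would start from the minimality inequality $J^\delta_\alpha(u^\delta_\alpha)\le J^\delta_\alpha(u^\dagger)$, which after cancelling $-\tfrac12\norm{Ku^\dagger}_Y^2$ collapses to
\[
\tfrac12\norm{K(u^\delta_\alpha-u^\dagger)}_Y^2 + \alpha R(u^\delta_\alpha) \le \alpha R(u^\dagger) + \delta\langle u^\delta_\alpha - u^\dagger,\eta\rangle.
\]
Now comes the key device suggested in the introduction: for $w\in Y$ split $\eta = K^*w + (\eta - K^*w)$. The first piece pairs against $u^\delta_\alpha - u^\dagger$ through $\langle u^\delta_\alpha - u^\dagger, K^*w\rangle = \langle K(u^\delta_\alpha - u^\dagger), w\rangle_Y$, so a single application of Young's inequality absorbs the residual $\tfrac12\norm{K(u^\delta_\alpha-u^\dagger)}_Y^2$ and leaves the harmless term $\tfrac{\delta^2}2\norm{w}_Y^2$. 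The second, approximate-source piece $\delta\langle u^\delta_\alpha - u^\dagger, \eta - K^*w\rangle$ I would split across $u^\delta_\alpha$ and $u^\dagger$ and bound each by the Fenchel--Young inequality with scaling $\gamma\alpha$, i.e.\ $\langle v,\xi\rangle \le R(v) + R^\star(\xi)$ applied to $\xi = \tfrac{\delta}{\alpha\gamma}(\cdot)$, producing $\gamma\alpha R(u^\delta_\alpha)$, $\gamma\alpha R(u^\dagger)$ and two conjugate contributions. Using the symmetry (R4), which makes $R^\star$ even, both conjugate terms coincide and combine into $2\gamma\alpha R^\star\!\left(\tfrac{\delta}{\alpha\gamma}(K^*w-\eta)\right)$. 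Collecting, moving $\gamma\alpha R(u^\delta_\alpha)$ to the left and dividing by $\alpha(1-\gamma)$ yields precisely \eqref{aprioriest}, valid for every $\gamma\in(0,1)$ and $w\in Y$.

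For existence I would run the usual direct method, with the estimate supplying compactness. Taking a minimizing sequence, each member eventually satisfies $J^\delta_\alpha(u_k)\le J^\delta_\alpha(u^\dagger)$, so the same computation bounds $R(u_k)$ by the right-hand side of \eqref{aprioriest}, provided that right-hand side is finite for some admissible $(\gamma,w)$. This is exactly where (R3) enters: since $\eta = K^*n$ with $n\in Z^*$ and $Y$ is dense in $Z^*$ in the Gelfand triple, while $K^*:Z^*\to X^*$ is continuous, I can choose $w\in Y$ with $\norm{K^*w-\eta}_{X^*}$ as small as desired, in particular so small that $\tfrac{\delta}{\alpha\gamma}(K^*w-\eta)$ lies in the ball on which $R^\star$ is finite. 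With $R(u_k)$ bounded the iterates lie in a fixed sublevel set $M_\rho$, which is $\tau$-sequentially compact by (R2); passing to a convergent subsequence and using that $K:X\to Z\embedded Y$ is $\tau$-continuous (so both the quadratic and the linear data terms are $\tau$-continuous) together with the $\tau$-lower semicontinuity of $R$ from (R1), the limit minimizes $J^\delta_\alpha$. The main obstacle is conceptual rather than computational: because $J^\delta_\alpha$ has no natural lower bound one cannot simply invoke coercivity, and the whole argument hinges on choosing the auxiliary $w$ so that $K^*w$ simultaneously absorbs the residual and approximates $\eta$ well enough that (R3) renders the conjugate term finite; verifying that this approximation is possible is the one place where the Gelfand-triple density of $Y$ in $Z^*$ and the mapping property $K^*:Z^*\to X^*$ are indispensable.
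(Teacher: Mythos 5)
Your proposal is correct and follows essentially the same route as the paper's proof: the same splitting $\eta = K^*w + (\eta - K^*w)$, Young's inequality to absorb the residual $\tfrac12\Vert K(u-u^\dagger)\Vert_Y^2$, the scaled Fenchel--Young inequality applied separately to $u^\delta_\alpha$ and $u^\dagger$ with (R4) merging the two conjugate terms, and the choice of $w$ via density of $Y$ in $Z^*$ together with (R3) to make the conjugate term finite so that (R2) supplies compactness. The only (inessential) difference is organizational: the paper runs the computation for every element of the sublevel set $\{J^\delta_\alpha \leq J^\delta_\alpha(u^\dagger)\}$ at once, obtaining compactness and the a-priori bound simultaneously, whereas you state the bound for a minimizer first and then repeat the same computation along a minimizing sequence.
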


\begin{proof}
Consider the sublevel set $M = \{u\in X~|~J^\delta_\alpha(u) \leq J^\delta_\alpha(u^\dagger)\}$. Clearly, $M$ is non-empty since $u^\dagger \in M$. Further, any $u\in M$ satisfies
\begin{eqnarray*}
	\label{eq:existence_1}
	\frac{1}2 \Vert K (u-u^\dagger) \Vert_Y^2 + \alpha R(u) & \leq & \delta \langle K(u - u^\dagger), n \rangle_{Z\times Z^*} + \alpha R(u^\dagger) \nonumber \\
	& = & \delta \langle u - u^\dagger, \eta \rangle_{X\times X^*} + \alpha R(u^\dagger) \\
	& = & \delta \langle u - u^\dagger, \eta - K^*w \rangle_{X\times X^*} + \delta \langle K(u-u^\dagger),w \rangle_Y + \alpha R(u^\dagger)\\
	& \leq & \alpha \gamma R(u) + 2\alpha \gamma R^\star\left(\frac{\delta}{\alpha \gamma} (K^*w-\eta)\right) + \frac{1}2 \Vert K(u-u^\dagger)\Vert_Y^2  \\&& + \frac{\delta^2}2 \Vert w \Vert_Y^2 + \alpha (1+\gamma) R(u^\dagger),
\end{eqnarray*}
where $0 < \gamma < 1$ and $w \in Y$ is arbitrary. The last inequality follows from using generalized Young's inequality.  For the definition of the convex conjugate $R^\star$ see Appendix \ref{sec:convexconjugates}. Due to assumptions (R2), (R3) and $Y$ being dense in $Z^*$ we can now choose $w \in Y$ such that for a constant $C > 0$
$$ R^\star\left(\frac{\delta}{\alpha \gamma} (K^*w-\eta)\right) \leq C, $$
and hence we obtain
$$
	R(u) \leq \frac{1+\gamma}{1-\gamma} R(u^\dagger) + \frac{\delta^2}{2\alpha(1-\gamma)}  \Vert w \Vert_Y^2 +\frac{2C\gamma}{1-\gamma}
$$	
which implies $M$ is compact due to assumption (R2).   

Now the existence follows by standard arguments.
Without loss of generality we can assume that $\{u_j\}_{j=1}^\infty \subset M$ is a minimizing sequence of $J^\delta_\alpha$.
Since $M$ is compact, there exists a converging subsequence $u_{j_k} \to \tilde u \in X$. Finally, the lower semicontinuity of $J^\delta_\alpha$ yields that $\tilde u$ is a minimizer.
Note that with existence of a minimizer $\tilde u$ we directly obtain the a-priori estimate \eqref{aprioriest}.
\end{proof}

\begin{rem}
We can prove a similar a-prior estimate for $R$ also without the symmetry assumption (R4). In that case we get for the minimizer $u_\alpha^\delta$
\begin{multline*}
R(u^\delta_\alpha) \leq 
\frac{1+\gamma}{1-\gamma} R(u^\dagger) + \frac{\delta^2}{2\alpha(1-\gamma)}  \Vert w \Vert_Y^2 \\ 
+\frac{\gamma}{1-\gamma} \left( R^\star\left(\frac{\delta}{\alpha \gamma} (\eta-K^*w)\right) + R^\star\left(\frac{\delta}{\alpha \gamma} (K^*w-\eta)\right)\right).
\end{multline*}
\end{rem}

\subsection{Basic Ingredients of Error Estimates}

In the following we discuss some basics needed for the derivation of error estimates and the use of the approximate source conditions. The starting point for error estimates is the optimality condition mentioned above. Since the first two terms are linear and quadratic it is straight-forward to verify that they are Frechet-differentiable in our setting. Then the subdifferential of the whole functional equals the sum of the Frechet derivative of the first part and the subdifferential of the regularization functional (cf. \cite{ekeland1976convex}), which immediately implies the following statement:

\begin{prop}
Under the assumptions above, a minimizer $u_\alpha^\delta$ of $J_\alpha^\delta$ satisfies the optimality condition (\ref{optimality}).
\end{prop}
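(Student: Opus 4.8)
The plan is to apply the elementary first-order characterisation of minimisers from convex analysis. Since $u^\dagger$ renders $J_\alpha^\delta$ finite, the functional is proper, convex and minimised at $u_\alpha^\delta$, so Fermat's rule gives $0 \in \partial J_\alpha^\delta(u_\alpha^\delta)$. I would then split $J_\alpha^\delta = F + \alpha R$ with
\[
	F(u) = \frac{1}{2}\Vert Ku\Vert_Y^2 - \langle Ku, f^\delta\rangle_{Z\times Z^*},
\]
and identify the subdifferential of the sum explicitly. Here $F$ is convex: the first term is the composition of the convex map $v\mapsto \tfrac12\Vert v\Vert_Y^2$ with the bounded linear operator $K:X\to Y$ (bounded because $K:X\to Z$ is continuous and $Z\embedded Y$), while the second term is affine; moreover $F$ is finite and continuous on all of $X$.

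The next step is to compute the Fréchet derivative of $F$. Differentiating in a direction $h\in X$, the quadratic term contributes $\langle Ku, Kh\rangle_Y = \langle K^*Ku, h\rangle_{X^*\times X}$ after identifying $Y=Y^*$, while the affine term contributes $-\langle Kh, f^\delta\rangle_{Z\times Z^*} = -\langle h, K^*f^\delta\rangle_{X^*\times X}$. The point to be careful about is the meaning of $K^*f^\delta$: since $f^\delta\in Z^*$ may be unbounded in $Y$, this is read through the continuous extension $K^*:Z^*\to X^*$ furnished by the continuity of $K:X\to Z$, so that $K^*f^\delta\in X^*$ is well defined. The two occurrences of the adjoint are consistent, because on $Y=Y^*\subset Z^*$ the Hilbert-space adjoint and the Banach extension agree by the Gelfand-triple compatibility; hence $Ku - f^\delta\in Z^*$ and
\[
	F'(u) = K^*\!\left(Ku - f^\delta\right)\in X^*.
\]

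Finally I would invoke the sum rule for subdifferentials, $\partial(F+\alpha R) = F' + \alpha\,\partial R$ (Moreau--Rockafellar, cf.\ \cite{ekeland1976convex}). The required constraint qualification is automatic in the present situation, since $F$ is finite and continuous everywhere and therefore continuous at every point of the effective domain of $R$ (e.g.\ at $u^\dagger$, where $R(u^\dagger)<\infty$). Combining with Fermat's rule at $u_\alpha^\delta$ yields $0\in K^*(Ku_\alpha^\delta - f^\delta) + \alpha\,\partial R(u_\alpha^\delta)$, i.e.\ there exists $\mu_\alpha^\delta\in\partial R(u_\alpha^\delta)$ with $K^*(Ku_\alpha^\delta - f^\delta) + \alpha\mu_\alpha^\delta = 0$, which is precisely \eqref{optimality}. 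Honestly, this statement is essentially a bookkeeping exercise once the framework is fixed; the only genuinely non-routine point is tracking the adjoint through the Gelfand triple so that $K^*f^\delta$ makes sense in $X^*$ despite the largeness of the noise, and this has already been secured by the standing assumptions.
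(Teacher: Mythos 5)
Your proposal is correct and takes essentially the same route as the paper, which likewise observes that the quadratic and linear fidelity terms are Fr\'echet differentiable and then applies the subdifferential sum rule of \cite{ekeland1976convex} together with $0\in\partial J_\alpha^\delta(u_\alpha^\delta)$. The extra care you take in checking that the Hilbert-space adjoint and the Gelfand-triple extension $K^*:Z^*\to X^*$ agree on $Y$, and that the continuity of $F$ supplies the constraint qualification, merely fills in details the paper leaves implicit.
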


As mentioned above, error estimates are based on rewriting (\ref{optimality}) and then taking a duality product with $u_\alpha^\delta - u^\dagger$. This naturally leads to estimates in the Bregman distance, whose definition we recall for completeness:

\begin{definition}[Bregman distance]
Let $R:X\to \R\cup\{\infty\}$ be a convex functional. Then for each $\mu_v \in \partial R(v) \subset X^*$ we define generalised Bregman distance between $u$ and $v$ as
\begin{align*}
D_R^{\mu_v}(u,v)=R(u)-R(v)-\langle \mu_v, u-v\rangle_{X^*\times X}.
\end{align*}
Moreover, for $\mu_u\in \partial R(u)$ we define symmetric Bregman distance between $u$ and $v$ as
\begin{align}\label{Bregman1}
D_R^{\mu_u,\mu_v}(u,v)=\langle \mu_u-\mu_v, u-v\rangle_{X^*\times X}.
\end{align}
\end{definition}

Let us now sketch the basic steps in the derivation of error estimates and the standard route in the case of bounded noise. Taking a duality product with (\ref{optimality}) and $u_\alpha^\delta - u^\dagger$ we get $$ \Vert K(u_\alpha^\delta - u^\dagger) \Vert_Y^2 + \alpha D_R^{\mu_\alpha^\delta,\mu^\dagger}(u_\alpha^\delta,u^\dagger) \leq \langle  \delta \eta - \alpha \mu^\dagger, 
u_\alpha^\delta-u^\dagger\rangle_{X^*\times X}.$$

The nice case leading directly to estimates is $\eta=K^* n$ with $n\in Y$ and the additional source condition $\mu^\dagger = K^* w^\dagger \in X^*$ for $w^\dagger \in Y$. Then the right-hand side becomes 
$$  \langle \delta \eta - \alpha \mu^\dagger, u_\alpha^\delta-u^\dagger\rangle_{X^*\times X} = \langle \delta n - \alpha w^\dagger, K (u_\alpha^\delta - u^\dagger) \rangle_{Y}, $$
and Young's inequality implies 
$$ \frac{1}2 \Vert K(u_\alpha^\delta - u^\dagger) \Vert_Y^2 + \alpha D_R^{\mu_\alpha^\delta,\mu^\dagger}(u_\alpha^\delta,u^\dagger) \leq \frac{1}2 \Vert \delta n - \alpha w^\dagger \Vert_Y^2. $$

The problem now becomes more difficult if $\eta$ or $\mu^\dagger$ are not in the range of $K^*$ (if the range is defined as $K^* Y$ and not $K^*$ on a larger space including the noise). Note that with the notation using $\eta$ instead of $K^*n$ it becomes apparent that technically $\eta$ not in the range of $K^*$ is equally difficult as $\mu^\dagger$ not in the range of $K^*$. The latter case is however reasonably well understood, at least in the case of strictly convex functionals $R$. This is discussed in detail in \cite{schusterbuch}. The idea is to use a so-called approximate source condition, quantifying how well $\mu^\dagger$ can be
approximated by elements in the range of $K^*$. Since $\mu^\dagger$ needs to be in the closure of the range, there exists a sequence $w_n$ with $K^* w_n \rightarrow \mu^\dagger$. On the other hand it is not in the range, hence $\Vert w_n \Vert$ necessarily diverges.
Thus, one can measure how well $\mu^\dagger$, respectively in our case $\delta\eta - \alpha \mu^\dagger$ can be approximated by elements $K^* w$ with a given upper bound on $\Vert w \Vert$. The best estimates are then obtained by balancing errors containing the approximation of $\delta\eta-\alpha \mu^\dagger$ and $\Vert w \Vert$. 

In the case of no strict source condition and unbounded noise we will approximate $\mu^\dagger$ and $\eta$ with separate elements $K^* w_1$ and $K^* w_2$ respectively. Then we can write
\begin{multline*}
  \langle \delta \eta - \alpha \mu^\dagger, u_\alpha^\delta-u^\dagger\rangle_{X^*\times X} = \\
\langle \delta (\eta -K^*w_2) - \alpha (\mu^\dagger-K^*w_1), u_\alpha^\delta-u^\dagger\rangle_{X^*\times X}  +
 \langle \delta w_2 - \alpha w_1, K (u_\alpha^\delta - u^\dagger) \rangle_{Y},
\end{multline*}
where $w_1,w_2\in Y$. The second term on the right hand side can now be estimated using Young's inequality as above, while for the first term it is natural to apply the generalized Young's inequality as in the proof of Proposition \ref{minestimate}. We shall estimate the terms multiplied by $\delta$ and $\alpha$ separately and overall study a problem of estimating a term of the form $ \langle \eta, u_\alpha^\delta-u^\dagger\rangle_{X^*\times X}$. For this sake we could separately estimate the duality products with $u_\alpha^\delta$ and $u^\dagger$ as in the proof of Proposition \ref{minestimate}. However, as we are interested mainly in functionals with some homogeneity properties and in particular (R4) we shall see that it is beneficial to use the following direct estimate
\begin{equation}
	 \left\langle \eta, u_\alpha^\delta-u^\dagger\right\rangle_{X^*\times X} = \zeta \left\langle \frac{\eta}\zeta, u_\alpha^\delta-u^\dagger\right\rangle_{X^*\times X} \leq \zeta R\left(u_\alpha^\delta-u^\dagger\right) + 
	\zeta R^\star\left(\frac{\eta}\zeta\right) ,
\end{equation}
which we shall employ further with appropriately chosen $\zeta > 0$. We observe that in proceeding as above we are left with two terms in dependence on $w_1$, namely
$\frac{\alpha^2}2 \Vert w_1 \Vert^2$ and $\alpha \zeta R^\star(\frac{K^* w_1 - \mu^\dagger }\zeta)$. Analogous reasoning holds for $w_2$, with $\alpha$ replaced by $\delta$. This motivates our approach to the approximate source conditions to be detailed in the following.

\subsection{A Variation on Approximate Source Condition}

The standard concept of approximate source condition is to consider the case $R(u) = \Vert u \Vert_X^r$ for some power $r > 1$ (cf. \cite{schusterbuch}). The key concept is the so-called distance function
\begin{equation} \label{distance}  
	d_\rho(\vartheta) := \inf_{w \in Y} \{ \Vert K^*w - \vartheta \Vert_{X^*}~|~ \Vert w \Vert_Y \leq \rho \}, 
\end{equation}
and its asymptotics as $\rho \rightarrow \infty$. Note that in the case of a fulfilled source condition $d_\rho(\vartheta) = 0$ for $\rho$ sufficiently large, while in the really approximate case $d_\rho(\vartheta)$ decays to zero at a finite rate. Hence, the speed of decay of $d_\rho(\vartheta)$ is a natural measure to quantify the approximateness of the source condition. Unfortunately the existing theory employing the approximate source conditions or the even more implicit variational inequalities only works for the special norm-type functionals above (cf. \cite{schusterbuch}) and in addition uses some moduli of strict convexity of the norms. This of course excludes the most interesting cases of one-homogeneous regularizations such as sparsity and total variation. Hence we propose to consider a more general formulation based on convex duality. 

As we have seen above it is crucial to approximate some elements $\vartheta \in X^*$ by $K^*w$ with $w \in Y$ in some kind of Fenchel dual problem defined by $K$ and $R$. More precisely, we are interested in minimal values of the functional
\begin{equation*}
	E_{\alpha,\zeta}(w;\vartheta) =  \zeta R^\star\left(\frac{K^*w-\vartheta}\zeta\right) +  \frac{\alpha}2 \Vert w \Vert_Y^2,
\end{equation*}
which we shall denote as
\begin{equation}\label{eq:sourcecondition}
	e_{\alpha,\zeta}(\vartheta) = \inf_{w \in Y} E_{\alpha,\zeta}(w;\vartheta). 
\end{equation}
In this paper approximated source conditions correspond to determining decay rates for \eqref{eq:sourcecondition}. 

\begin{rem}
Indeed it can be inferred from the Fenchel duality theorem (cf. \cite{ekeland1976convex}) that $E_{\alpha,\zeta}(w;\vartheta)$ is dual (as a functional of $w$) to
\begin{equation*}
	F_{\alpha,\zeta}(v;\vartheta) = \frac{1}{2\alpha}\Vert K v \Vert_Y^2 - \langle \vartheta, v \rangle_{X^*\times X} + \zeta R(v)
\end{equation*}
and it holds that
\begin{equation}\label{eq:source}
	e_{\alpha,\zeta}(\vartheta) = - \inf_{v \in X} F_{\alpha,\zeta}(v;\vartheta).
\end{equation}
Thus, the measure $e_{\alpha,\zeta}$ measures how fast a regularization method approximating $\vartheta$ (related to the noise or source element) diverges and is hence a natural quantity.
For $R^\star(\vartheta)$ being finite, this immediately implies a bound on  $e_{\alpha,\zeta}(\vartheta)$ via the generalized Young inequality
$$ \langle \vartheta, v \rangle_{X^*\times X} \leq \frac{1}{\zeta} R^\star(\vartheta) + {\zeta} R(v). $$
This results into 
\begin{equation*}
	e_{\alpha,\zeta}(\vartheta) \leq \frac{1}{\zeta} R^\star(\vartheta).
\end{equation*}
Obviously, this estimate is not optimal under most conditions since it does not involve the first term in $F_{\alpha,\zeta}$. As we shall see below the bound can be improved under certain conditions, depending also on the homogeneity properties of $R$.
\end{rem} 

In the case of a Hilbert space regularization, $R(u) = \frac{1}2 \Vert u\Vert_X^2$, we have
$$E_{\alpha,\zeta}(w;\vartheta) = \frac{1}{2\zeta} \Vert \vartheta-K^*w \Vert_X^2  +  \frac{\alpha}2 \Vert w \Vert_Y^2 = \alpha E_{1,\zeta \alpha}(w;\vartheta) $$
and the problem of computing the minimizer is a classical Tikhonov regularization problem.
In particular in this example but also in the more general case the minimization of $E_{\alpha,\zeta}$ is closely related to the minimization in the definition of distance functions, roughly it can be understood as some kind of Lagrange multiplier formulation of the constrained problem for computing $d_\rho$. Moreover, it can be related to classical source conditions
$ \vartheta = (K^* K)^\nu w^*$, which are routinely used in the linear theory (cf. \cite{engl1996regularization}). 
We will provide other examples of approximate source conditions and their implications for functionals with a certain degree of homogeneity in Section 3.

We finally mention that we can also rewrite the a-priori estimate from Proposition \ref{minestimate} in terms of the approximate source condition \eqref{eq:sourcecondition}
\begin{equation}\label{eq:apriori_e}
	R(u_\alpha^\delta) \leq  \frac{1+\zeta}{1-\zeta} R(u^\dagger) + \frac{2\delta}{\alpha(1-\zeta)} e_{\frac{\delta}{2},\frac{\alpha \zeta}{\delta}}(\eta)
\end{equation}
with any $\zeta\in(0,1)$. Moreover, the approximate source conditions match well the use of Bregman distances as an error measure. Indeed, using the definition \eqref{Bregman1} of Bregman distance and completely analogous techniques as in the following one can show a conditional well-posedness result in the (symmetric) Bregman distances for all elements $u_1$, $u_2$ and their subgradients satisfying an approximate source condition and a bound
$R(u_1-u_2) \leq \gamma$, i.e. 
\begin{equation}
	D_R^{\mu_1,\mu_2}(u_1,u_2) \leq \varphi( \Vert Ku_1 - K u_2 \Vert),
\end{equation}
with 
$$ \varphi(t) = \inf_{\alpha, \zeta} \left( \frac{t^2}{2\alpha} + \gamma \zeta + e_{\alpha,\zeta}(\mu_1) +  e_{\alpha,\zeta}(\mu_2) \right) .$$

\subsection{Error Estimates}

In order to obtain error estimates we start from the rewritten version of the optimality condition
\eqref {optimalityrewritten}
and take a duality product with $u_\alpha^\delta - u^\dagger$ in the same way as sketched above. Then the right-hand side is estimated as 
\begin{multline}
	\langle \delta \eta -  \alpha \mu^\dagger , u_\alpha^\delta - u^\dagger \rangle_{X^*\times X} \leq \\ (  \alpha \zeta_1 + \delta \zeta_2 ) R(u_\alpha^\delta - u^\dagger) + \frac{1}2 \Vert K(u_\alpha^\delta - u^\dagger) \Vert_Y^2 +  \alpha e_{\alpha,\zeta_1}(\mu^\dagger)+ \delta e_{\delta,\zeta_2}(\eta)
\end{multline}
This immediately leads to the following error estimates:

\begin{prop}  \label{preliminaryestimates}
Let $R$ satisfy (R1)-(R4). Then with the assumptions above we obtain for any positive real numbers $\zeta_1$, $\zeta_2$:
\begin{equation}
	\Vert K(u_\alpha^\delta - u^\dagger) \Vert_Y^2 + 2 \alpha D_R^{\mu_\alpha^\delta,\mu^\dagger}(u_\alpha^\delta,u^\dagger) \leq 2 (  \alpha \zeta_1 + \delta \zeta_2 ) R(u_\alpha^\delta - u^\dagger) + 2
\alpha e_{\alpha,\zeta_1}(\mu^\dagger)+ 2 \delta e_{\delta,\zeta_2}(\eta)
\end{equation}
and furthermore 
\begin{equation}
	\label{eq:bregman_dist_est_first}
	D_R^{\mu_\alpha^\delta,\mu^\dagger}(u_\alpha^\delta,u^\dagger) \leq (   \zeta_1 + \frac{\delta}\alpha \zeta_2 ) R(u_\alpha^\delta - u^\dagger) + 
 e_{\alpha,\zeta_1}(\mu^\dagger)+ \frac{\delta}\alpha e_{\delta,\zeta_2}(\eta).
\end{equation}
\end{prop}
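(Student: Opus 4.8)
The plan is to turn the rewritten optimality condition \eqref{optimalityrewritten} into the two stated inequalities by testing it against $u_\alpha^\delta - u^\dagger$ and then bounding the resulting duality product by means of the functionals $e_{\alpha,\zeta_1}$ and $e_{\delta,\zeta_2}$, exactly along the lines sketched in the two paragraphs preceding the statement. First I would set $v := u_\alpha^\delta - u^\dagger$ and apply $\langle\,\cdot\,, v\rangle_{X^*\times X}$ to \eqref{optimalityrewritten}. The quadratic term gives $\langle K^*Kv, v\rangle_{X^*\times X} = \Vert Kv\Vert_Y^2$, and by the definition \eqref{Bregman1} of the symmetric Bregman distance the subgradient term gives $\alpha\langle\mu_\alpha^\delta - \mu^\dagger, v\rangle_{X^*\times X} = \alpha D_R^{\mu_\alpha^\delta,\mu^\dagger}(u_\alpha^\delta, u^\dagger)$. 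One thus obtains the exact identity $\Vert Kv\Vert_Y^2 + \alpha D_R^{\mu_\alpha^\delta,\mu^\dagger}(u_\alpha^\delta,u^\dagger) = \langle\delta\eta - \alpha\mu^\dagger, v\rangle_{X^*\times X}$, reducing everything to estimating the single right-hand duality product from above.

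Next I would estimate that product by treating the $\eta$-contribution and the $\mu^\dagger$-contribution separately and symmetrically. For arbitrary $w_1, w_2 \in Y$ I split $\delta\langle\eta, v\rangle = \langle\delta(\eta - K^*w_2), v\rangle + \delta\langle w_2, Kv\rangle_Y$ and likewise $-\alpha\langle\mu^\dagger, v\rangle = \langle\alpha(K^*w_1 - \mu^\dagger), v\rangle - \alpha\langle w_1, Kv\rangle_Y$. To each of the two dual-defect products I apply the scaled Fenchel--Young inequality $\langle\vartheta, v\rangle \le \zeta R(v) + \zeta R^\star(\vartheta/\zeta)$ (the generalized Young inequality already used in the proof of Proposition \ref{minestimate}), with $\zeta=\zeta_2$ and $\zeta=\zeta_1$ respectively; here I invoke (R4), equivalently the evenness of $R^\star$, to line up the sign of the argument of $R^\star$ with the convention $R^\star((K^*w-\vartheta)/\zeta)$ built into $E_{\alpha,\zeta}$. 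To the two remaining $Y$-inner products against $Kv$ I apply the ordinary Young inequality, choosing the weights so that the emerging quadratic terms are precisely $\frac{\alpha}{2}\Vert w_1\Vert_Y^2$ and $\frac{\delta}{2}\Vert w_2\Vert_Y^2$.

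With these choices the $w_1$- and $w_2$-dependent parts assemble exactly into $\alpha E_{\alpha,\zeta_1}(w_1;\mu^\dagger)$ and $\delta E_{\delta,\zeta_2}(w_2;\eta)$, while a controlled multiple of $\Vert Kv\Vert_Y^2$ is left over. Since $w_1,w_2 \in Y$ are arbitrary and appear nowhere else, I pass to the infimum over each, replacing $E_{\alpha,\zeta_1}$ by $e_{\alpha,\zeta_1}(\mu^\dagger)$ and $E_{\delta,\zeta_2}$ by $e_{\delta,\zeta_2}(\eta)$. This yields the intermediate bound displayed just before the proposition, $\langle\delta\eta - \alpha\mu^\dagger, v\rangle \le (\alpha\zeta_1 + \delta\zeta_2)R(v) + \frac12\Vert Kv\Vert_Y^2 + \alpha e_{\alpha,\zeta_1}(\mu^\dagger) + \delta e_{\delta,\zeta_2}(\eta)$. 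Inserting this into the identity and absorbing the residual $\frac12\Vert Kv\Vert_Y^2$ into the left-hand $\Vert Kv\Vert_Y^2$ gives, after multiplying by $2$, the first asserted inequality; discarding the nonnegative term $\Vert Kv\Vert_Y^2$ and dividing by $2\alpha$ then gives the second.

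The genuinely structural ingredient is the symmetry (R4), which lets the two oppositely-signed defect terms be subsumed under a single $R^\star$ convention, hence under the single quantities $e_{\alpha,\zeta_1}$ and $e_{\delta,\zeta_2}$; without it one would be left with a sum of two $R^\star$ values, as in the remark following Proposition \ref{minestimate}. The part demanding the most care is the bookkeeping of the Young weights: one must balance the generalized and the ordinary Young inequalities so that the quadratic $\Vert w_i\Vert_Y^2$ terms reproduce exactly the coefficients $\frac{\alpha}{2}$, $\frac{\delta}{2}$ inside $E_{\alpha,\zeta_1}$, $E_{\delta,\zeta_2}$, while the leftover $\Vert Kv\Vert_Y^2$ stays small enough to be absorbed on the left. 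Everything else is the routine convex-duality computation already previewed in the text.
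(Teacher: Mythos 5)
Your proposal is correct and follows essentially the same route as the paper: the paper's own (implicit) proof is exactly the derivation you give, namely testing the rewritten optimality condition \eqref{optimalityrewritten} with $u_\alpha^\delta-u^\dagger$, splitting the right-hand side via auxiliary elements $w_1,w_2\in Y$, applying the generalized Young inequality (with (R4) to fix the sign convention in $R^\star$) to the dual-defect terms and the ordinary Young inequality to the $Y$-inner products, and then passing to the infima defining $e_{\alpha,\zeta_1}(\mu^\dagger)$ and $e_{\delta,\zeta_2}(\eta)$. You even recover verbatim the intermediate estimate displayed just before the proposition, from which both inequalities follow exactly as you describe.
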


In order to obtain meaningful estimates we need to further estimate $R(u_\alpha^\delta - u^\dagger) $, ideally in terms of the Bregman distance, which however strongly depends on the specific scaling properties of the underlying functional $R$. Inspired by $p$-convex functionals (cf. \cite{bonesky2008minimization}), we shall consider the following assumption: There exists $\theta \in [0,1]$  such that
\begin{equation} \label{bregmanscaling}
R(u-v) \leq C_\theta(u,v) \left(D_R^{\mu_u,\mu_v}(u,v)\right)^\theta
\end{equation}
for all $u,v \in X$, $\mu_u \in \partial R(u)$ and $ \mu_v \in \partial R(v)$. Above  the constant $C_\theta$ is bounded on sets where $R(u)$ and $R(v)$ are bounded. The canonical examples to be considered are square norms (leading to $\theta =1$) and one-homogeneous functionals (leading to $\theta =0$).

\begin{example}\label{ex:two-homogeneous}
Let $X$ be a Hilbert space, $L$ a bounded linear operator, and $R(u)=\frac{1}2 \Vert Lu \Vert_X^2$. In consequence, $D_R^{\mu_u,\mu_v}(u,v)= \Vert L(u - v) \Vert_X^2 = 2 R(u-v)$ and inequality \eqref{bregmanscaling} holds with $\theta = 1$ and $C_\theta(u,v) \equiv \frac{1}2$.
\end{example}

\begin{example}
\label{ex:one-homogeneous}
Let $R$ be one-homogeneous, symmetric around zero, and convex. We immediately obtain a triangle inequality
$$ R(u-v) \leq R(u) + R(v), $$
and hence \eqref{bregmanscaling} holds with $\theta = 0$ and $C_0(u,v) = R(u) + R(v)$. It is easy to see that for $R$ of the above form no estimate with $\theta > 0$ can hold. As an example consider $R:\mathbb{R}\rightarrow \mathbb{R}$, $R(u)=|u|$. If $u$ and $v$ differ, but have equal sign, we obtain $|u-v|\neq 0$, but $D_R^{p,q}(u,v) = 0$.
\end{example}

\subsection{Convergence theorems}

With assumption \eqref{bregmanscaling} we can further estimate the right-hand side in the above estimates as
\begin{eqnarray*}
\left(   \zeta_1 + \frac{\delta}\alpha \zeta_2 \right) R(u_\alpha^\delta - u^\dagger) &\leq& 
\left(   \zeta_1 + \frac{\delta}\alpha \zeta_2 \right) C_\theta(u_\alpha^\delta ,u^\dagger) D_R^{\mu_\alpha^\delta,\mu^\dagger}(u_\alpha^\delta,u^\dagger)^\theta \\
&\leq& \theta D_R^{\mu_\alpha^\delta,\mu^\dagger}(u_\alpha^\delta,u^\dagger) + (1-\theta) \left(   \zeta_1 + \frac{\delta}\alpha \zeta_2 \right)^{1/(1-\theta)} C_\theta(u_\alpha^\delta ,u^\dagger)^{1/(1-\theta)},
\end{eqnarray*}
 if $\theta < 1$. In the case $\theta =1$ the first estimate is the only relevant one. This leads to the following result

\begin{thm} \label{deterministicestimates}
Let $R$ satisfy the assumptions of Proposition \ref{preliminaryestimates} and \eqref{bregmanscaling}. Then for, $\theta < 1$ we obtain 
\begin{multline}
\label{eq:deterministicestimates}
	D_R^{\mu_\alpha^\delta,\mu^\dagger}(u_\alpha^\delta,u^\dagger) \leq \inf_{(\zeta_1,\zeta_2) \in \mathbb{R}_+^2} \left\{ \left(   \zeta_1 + \frac{\delta}\alpha \zeta_2 \right)^{1/(1-\theta)} C_\theta(u_\alpha^\delta ,u^\dagger)^{1/(1-\theta)}\right. \\ \left.+ \frac{1}{1-\theta}
 e_{\alpha,\zeta_1}(\mu^\dagger)+ \frac{\delta}{\alpha(1-\theta)} e_{\delta,\zeta_2}(\eta)\right\}.
\end{multline}
For $\theta =1$ the estimate
\begin{equation}
	D_R^{\mu_\alpha^\delta,\mu^\dagger}(u_\alpha^\delta,u^\dagger) \leq \inf_{\zeta_1,\zeta_2 \in \Sigma} \frac{ e_{\alpha,\zeta_1}(\mu^\dagger)+ \frac{\delta}{\alpha } e_{\delta,\zeta_2}(\eta)}{1-(   \zeta_1 + \frac{\delta}\alpha \zeta_2 ) C_1(u_\alpha^\delta ,u^\dagger)}
\end{equation}
holds with 
$$ \Sigma = \left\{~(\zeta_1,\zeta_2) \in \mathbb{R}_+^2~\bigg\vert~ \left(   \zeta_1 + \frac{\delta}\alpha \zeta_2 \right) C_1(u_\alpha^\delta ,u^\dagger) < 1~\right\}. $$
\end{thm}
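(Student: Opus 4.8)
The plan is to start from the error estimate already established in Proposition~\ref{preliminaryestimates}, specifically the Bregman-distance inequality \eqref{eq:bregman_dist_est_first}, and then apply the scaling assumption \eqref{bregmanscaling} to convert the term $R(u_\alpha^\delta - u^\dagger)$ into something controllable by the Bregman distance itself. The whole proof is essentially an absorption argument: we want to move a multiple of $D_R^{\mu_\alpha^\delta,\mu^\dagger}(u_\alpha^\delta,u^\dagger)$ from the right-hand side to the left, and the two cases $\theta<1$ and $\theta=1$ differ precisely in how that absorption is carried out.

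\emph{Case $\theta<1$.} First I would insert the bound $R(u_\alpha^\delta-u^\dagger)\leq C_\theta(u_\alpha^\delta,u^\dagger)\,D^\theta$ (writing $D$ for the symmetric Bregman distance to save space) into \eqref{eq:bregman_dist_est_first}, giving
$$
  D \leq \left(\zeta_1 + \tfrac{\delta}{\alpha}\zeta_2\right) C_\theta(u_\alpha^\delta,u^\dagger)\, D^\theta + e_{\alpha,\zeta_1}(\mu^\dagger) + \tfrac{\delta}{\alpha} e_{\delta,\zeta_2}(\eta).
$$
The key manipulation is Young's inequality in the form $a\,D^\theta \leq \theta D + (1-\theta)a^{1/(1-\theta)}$, which is exactly the estimate displayed just before the theorem with $a=(\zeta_1+\tfrac{\delta}{\alpha}\zeta_2)C_\theta$. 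Applying it produces a term $\theta D$ on the right, which I then subtract from the left; dividing the resulting inequality by $(1-\theta)>0$ yields
$$
  D \leq \left(\zeta_1 + \tfrac{\delta}{\alpha}\zeta_2\right)^{1/(1-\theta)} C_\theta(u_\alpha^\delta,u^\dagger)^{1/(1-\theta)} + \tfrac{1}{1-\theta}\, e_{\alpha,\zeta_1}(\mu^\dagger) + \tfrac{\delta}{\alpha(1-\theta)}\, e_{\delta,\zeta_2}(\eta).
$$
Since $\zeta_1,\zeta_2$ were arbitrary positive reals and the left-hand side does not depend on them, I finally take the infimum over all $(\zeta_1,\zeta_2)\in\mathbb{R}_+^2$ to obtain \eqref{eq:deterministicestimates}.

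\emph{Case $\theta=1$.} Here the power $D^\theta$ is just $D$ itself, so no Young's inequality is needed and the right-hand side term is genuinely linear in $D$. Inserting the scaling bound gives
$$
  D \leq \left(\zeta_1 + \tfrac{\delta}{\alpha}\zeta_2\right) C_1(u_\alpha^\delta,u^\dagger)\, D + e_{\alpha,\zeta_1}(\mu^\dagger) + \tfrac{\delta}{\alpha} e_{\delta,\zeta_2}(\eta).
$$
For the absorption to work I now need the coefficient of $D$ to be strictly less than $1$, which is exactly why the admissible set $\Sigma$ is defined by the constraint $(\zeta_1+\tfrac{\delta}{\alpha}\zeta_2)C_1(u_\alpha^\delta,u^\dagger)<1$. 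On $\Sigma$ I collect the $D$ terms and divide by $1-(\zeta_1+\tfrac{\delta}{\alpha}\zeta_2)C_1$, obtaining the stated quotient, and then take the infimum over $(\zeta_1,\zeta_2)\in\Sigma$.

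\textbf{The main obstacle} is essentially bookkeeping rather than a deep difficulty: one must be careful that $C_\theta(u_\alpha^\delta,u^\dagger)$ is indeed finite (which follows from the hypothesis in \eqref{bregmanscaling} that $C_\theta$ is bounded when $R(u)$ and $R(v)$ are bounded, together with the a-priori bound on $R(u_\alpha^\delta)$ from Proposition~\ref{minestimate}), and that in the $\theta=1$ case the set $\Sigma$ is nonempty so the infimum is meaningful — the latter holds because $\zeta_1,\zeta_2$ may be taken arbitrarily small. The only genuinely content-bearing step is the Young-type splitting in the $\theta<1$ case, but that has already been carried out in the display preceding the theorem, so the proof reduces to invoking it and taking infima.
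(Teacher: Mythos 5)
Your proof is correct and follows exactly the paper's route: the paper establishes the theorem by inserting \eqref{bregmanscaling} into the estimate \eqref{eq:bregman_dist_est_first} of Proposition \ref{preliminaryestimates}, applying the Young-type splitting $aD^\theta \leq \theta D + (1-\theta)a^{1/(1-\theta)}$ displayed just before the theorem for $\theta<1$, and absorbing the linear $D$ term directly on the set $\Sigma$ for $\theta=1$. Your remarks on the finiteness of $C_\theta(u_\alpha^\delta,u^\dagger)$ via the a-priori bound and the nonemptiness of $\Sigma$ match the paper's own comment following Theorem \ref{deterministicestimates2}, so nothing is missing.
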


We finally mention an alternative statement of Theorem \ref{deterministicestimates}, which also takes into account an estimate of the residual. In the subsequent parts of the paper we will not discuss estimates for the residual, but obviously those can be obtained in the same way using the following result:
\begin{thm} \label{deterministicestimates2}
Let $R$ satisfy the assumptions of Proposition \ref{preliminaryestimates} and \eqref{bregmanscaling}. Then for, $\theta < 1$ we obtain 
\begin{multline*}
\|K(u_\alpha^\delta-u^\dagger)\|^2_Y + (2\alpha-\theta) D_R^{\mu_\alpha^\delta,\mu^\dagger}(u_\alpha^\delta,u^\dagger)\\
\leq \inf_{(\zeta_1,\zeta_2) \in (\mathbb{R}^+)^2} \left\{
(1-\theta)(2(\alpha\zeta_1 + \delta\zeta_2) )^{1/(1-\theta)} C_\theta(u_\alpha^\delta ,u^\dagger)^{1/(1-\theta)} 
 + 2\alpha e_{\alpha,\zeta_1}(\mu^\dagger)
+ 2\delta e_{\delta,\zeta_2}(\eta)\right\}
\end{multline*}
\end{thm}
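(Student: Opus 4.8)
The plan is to reproduce the argument that leads to Theorem~\ref{deterministicestimates}, but to retain the residual term by starting one step earlier: from the \emph{undivided} first inequality of Proposition~\ref{preliminaryestimates} rather than from its rescaled form~\eqref{eq:bregman_dist_est_first}. So first I would fix $\zeta_1,\zeta_2>0$ and invoke that first inequality,
\[
\|K(u_\alpha^\delta-u^\dagger)\|_Y^2 + 2\alpha D_R^{\mu_\alpha^\delta,\mu^\dagger}(u_\alpha^\delta,u^\dagger) \leq 2(\alpha\zeta_1+\delta\zeta_2)\, R(u_\alpha^\delta-u^\dagger) + 2\alpha\, e_{\alpha,\zeta_1}(\mu^\dagger) + 2\delta\, e_{\delta,\zeta_2}(\eta),
\]
in which the quadratic residual is still present on the left.

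Abbreviating $D := D_R^{\mu_\alpha^\delta,\mu^\dagger}(u_\alpha^\delta,u^\dagger)$ and $C_\theta := C_\theta(u_\alpha^\delta,u^\dagger)$, I would next apply the scaling assumption~\eqref{bregmanscaling} to the single term $R(u_\alpha^\delta-u^\dagger)$, obtaining $R(u_\alpha^\delta-u^\dagger)\leq C_\theta D^\theta$, so that the first summand on the right is controlled by $2(\alpha\zeta_1+\delta\zeta_2)C_\theta D^\theta$. The key estimate is then the very same Young inequality employed just before Theorem~\ref{deterministicestimates}: with conjugate exponents $1/\theta$ and $1/(1-\theta)$ (whose reciprocals sum to one for $\theta\in(0,1)$), splitting the product by treating $D^\theta$ as the first factor and $2(\alpha\zeta_1+\delta\zeta_2)C_\theta$ as the second gives
\[
2(\alpha\zeta_1+\delta\zeta_2)C_\theta D^\theta \leq \theta D + (1-\theta)\bigl(2(\alpha\zeta_1+\delta\zeta_2)C_\theta\bigr)^{1/(1-\theta)}.
\]
The borderline case $\theta=0$ needs no Young inequality at all: there $D^\theta=1$ and the bound holds with equality, consistent with the claimed formula.

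Substituting this back and absorbing the resulting $\theta D$ onto the left-hand side converts the coefficient $2\alpha$ in front of the Bregman distance into $2\alpha-\theta$, leaving
\[
\|K(u_\alpha^\delta-u^\dagger)\|_Y^2 + (2\alpha-\theta)D \leq (1-\theta)\bigl(2(\alpha\zeta_1+\delta\zeta_2)C_\theta\bigr)^{1/(1-\theta)} + 2\alpha\, e_{\alpha,\zeta_1}(\mu^\dagger) + 2\delta\, e_{\delta,\zeta_2}(\eta).
\]
Since this holds for every admissible pair and the left-hand side is independent of $\zeta_1,\zeta_2$, taking the infimum over $(\zeta_1,\zeta_2)\in(\mathbb{R}^+)^2$ on the right yields exactly the asserted estimate.

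I expect no genuine obstacle: the statement is a recombination of ingredients already in place, differing from Theorem~\ref{deterministicestimates} only in that the normalization by $2\alpha$ is not performed, so the residual survives and the factors appear as $2\alpha,2\delta$ rather than $\zeta_1+\tfrac{\delta}{\alpha}\zeta_2$. The points demanding care are purely bookkeeping: choosing the Young exponents and the factor grouping so that $D^\theta$ cleanly produces a $\theta D$ suitable for absorption, and recalling that $C_\theta(u_\alpha^\delta,u^\dagger)$ is a fixed constant throughout the optimization (the minimizer $u_\alpha^\delta$ and the element $u^\dagger$ being fixed, with $C_\theta$ finite because $R(u_\alpha^\delta)$ is controlled by Proposition~\ref{minestimate} and $R(u^\dagger)<\infty$). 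One should also note that the coefficient $2\alpha-\theta$ need not be positive for very small $\alpha$; the inequality remains formally valid in that case, but it is informative precisely in the regime $2\alpha>\theta$, which in particular always holds when $\theta=0$.
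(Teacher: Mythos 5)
Your proposal is correct and coincides with the paper's own (only implicitly indicated) derivation: one starts from the first, undivided inequality of Proposition \ref{preliminaryestimates}, bounds $R(u_\alpha^\delta-u^\dagger)$ via \eqref{bregmanscaling}, applies Young's inequality with conjugate exponents $1/\theta$ and $1/(1-\theta)$ exactly as in the display preceding Theorem \ref{deterministicestimates}, and absorbs the resulting $\theta D_R^{\mu_\alpha^\delta,\mu^\dagger}(u_\alpha^\delta,u^\dagger)$ into the left-hand side to obtain the coefficient $2\alpha-\theta$. Your side remarks (the trivial $\theta=0$ case, the fixed finite constant $C_\theta(u_\alpha^\delta,u^\dagger)$ via Proposition \ref{minestimate}, and the possible non-positivity of $2\alpha-\theta$) are all accurate and do not alter the argument.
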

Note that the constant $C_\theta(u_\alpha^\delta,u^\dagger)$ above depends on $R(u_\alpha^\delta)$ and hence also on the corresponding a-priori estimate.

\section{Convergence Rates for Homogeneous Regularizations}
\label{sec:convergence_rates}

Let us shortly introduce some notation. Throughout the following sections we denote $f \lesssim g$ for two functions if there exists a universal constant $C>0$ such that $f \leq Cg$ as functions. Moreover, if functions $f$ and $g$ are equivalent we write $f\simeq g$. Notice that if a random variable $X$ has a probability distribution $\pi$, we write $X \sim \pi$.  

\subsection{Regularization by one-homogeneous functionals}    
  
Let us directly proceed to the case of a one-homogeneous functional $R$ such as Besov-one norms or total variation.   
We assume that $X$ is a suitable space such that $R$ has a trivial nullspace (note that the nullspace of a one-homogeneous convex functional is always a linear space, and if it is finite-dimensional this component can be eliminated via similar arguments as in the total variation case detailed in \cite{tvzoo}).

In this case we can define a dual "norm" $S$ on $X^*$ via
\begin{equation}
	S(q) = \sup_{R(u) \leq 1} \langle q, u \rangle_{X^* \times X}.    \label{eq:Sdefinition}
\end{equation}
Note that $S$ is again one-homogeneous.
The one-homogeneity of $R$ implies
\begin{equation}
	\langle  q, u \rangle_{X^* \times X} \leq R(u) ~S(q)
\end{equation}
for all $ u \in X$ and $q \in X^*.$
In the case of one-homogeneous $R$ we can relate $R^\star$ and $S$ as follows:

\begin{lemma}\label{lemma:p=1}
Let $R: X \rightarrow \R \cup \{\infty \} $ be convex, non-negative and one-homogeneous and let $S: X^* \rightarrow \R \cup \{\infty \} $ be defined by \eqref{eq:Sdefinition}. Then for any $c \in \R^+$, we have
\begin{equation}
	R^\star(cq) = \left\{ \begin{array}{ll} 0 & \text{if } S(q) \leq \frac{1}c \\  +\infty & \text{else.} \end{array}  \right. 
\end{equation}
\end{lemma}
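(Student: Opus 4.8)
The plan is to compute the convex conjugate $R^\star$ directly from its definition, exploiting one-homogeneity to reduce everything to the behaviour of the linear functional $u \mapsto \langle cq, u\rangle$ on the sublevel set $\{R(u)\le 1\}$. Recall that
\[
R^\star(cq) = \sup_{u \in X}\left\{ \langle cq, u\rangle_{X^*\times X} - R(u)\right\}.
\]
First I would observe that $R^\star(cq)\ge 0$ always, since $u=0$ gives $R(0)=0$ (by one-homogeneity $R(0)=R(0\cdot 0)=0$) and $\langle cq,0\rangle=0$, so the supremum is at least $0$.

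Next I would exploit the homogeneity by a scaling argument. For any fixed $u\ne 0$ with $R(u)<\infty$, consider the ray $t\mapsto tu$ for $t>0$. Along this ray the objective becomes $t\langle cq,u\rangle - tR(u) = t\big(\langle cq,u\rangle - R(u)\big)$ by one-homogeneity of $R$. This is linear in $t$, so the supremum over $t>0$ is either $0$ (attained in the limit $t\to 0$) if $\langle cq,u\rangle - R(u)\le 0$, or $+\infty$ if $\langle cq,u\rangle - R(u)>0$. Hence $R^\star(cq)$ is either $0$ or $+\infty$, and it equals $+\infty$ precisely when there exists some $u$ with $\langle cq,u\rangle > R(u)$.

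It then remains to translate the dichotomy "$\langle cq,u\rangle\le R(u)$ for all $u$" into the condition $S(q)\le 1/c$. Here I would use the inequality $\langle q,u\rangle \le R(u)\,S(q)$ recorded just before the lemma, together with the definition $S(q)=\sup_{R(u)\le 1}\langle q,u\rangle$. If $S(q)\le 1/c$, then for any $u$ with $R(u)\le 1$ we have $\langle cq,u\rangle = c\langle q,u\rangle \le cS(q)\le 1 = R(u)$ when $R(u)=1$, and for general $u$ with $0<R(u)<\infty$ the homogeneity gives $\langle cq,u\rangle = R(u)\langle cq, u/R(u)\rangle \le R(u)\cdot cS(q)\le R(u)$; the case $R(u)=0$ forces $\langle q,u\rangle=0$ since otherwise scaling $u$ makes $S(q)$ infinite, so the inequality holds there too. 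Conversely, if $S(q)>1/c$, then by definition of $S$ there is some $u$ with $R(u)\le 1$ and $c\langle q,u\rangle > 1\ge R(u)$, giving a point where the objective is strictly positive and hence $R^\star(cq)=+\infty$.

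The main obstacle I anticipate is the careful handling of the nullspace $\{R=0\}$ (directions where $R(u)=0$ but $u\ne 0$): one must check that finiteness of $S(q)$ already forces $\langle q,u\rangle=0$ on this set, so that the two regimes $S(q)\le 1/c$ and $S(q)>1/c$ genuinely partition all possibilities. The lemma statement does not assume the trivial-nullspace hypothesis made earlier in the section, so I would verify that the argument goes through using only convexity, non-negativity, and one-homogeneity, relying on the fact that if $\langle q,u\rangle\ne 0$ for some $u$ with $R(u)=0$ then $S(q)=+\infty$, in which case the condition $S(q)\le 1/c$ is vacuously false and the value is correctly $+\infty$.
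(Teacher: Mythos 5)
Your proof is correct, but it takes a different route from the paper: the paper in fact gives no argument of its own for this lemma. It only observes that a convex, non-negative, one-homogeneous $R$ (with the convention $R(cu)=|c|R(u)$) is sublinear, and then cites the general theory of sublinear functionals in \cite[Section V]{hiriart2013convex}, where one finds the classical fact that the conjugate of a sublinear function is the indicator function of the closed convex set $\{q \in X^* : \langle q,u\rangle_{X^*\times X} \leq R(u) \text{ for all } u\in X\} = \partial R(0)$. Your ray-scaling argument is precisely an elementary, self-contained proof of that fact, and your translation of the membership condition into $S(q)\leq \frac 1c$ is the identification that the paper leaves implicit; what the paper's route buys is brevity, while yours makes the lemma independent of external machinery and, usefully, verifies that the trivial-nullspace hypothesis stated earlier in the section is not needed. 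Two minor points to tighten. First, in the nullspace step, positive scaling alone only forces $\langle q,u\rangle \leq 0$ whenever $R(u)=0$ and $S(q)<\infty$; the full vanishing $\langle q,u\rangle=0$ additionally uses the symmetry $R(-u)=R(u)$, which is available here since the paper's homogeneity convention is $R(cu)=|c|R(u)$ (making $\{R=0\}$ a linear space). But note the one-sided inequality $\langle cq,u\rangle \leq 0 = R(u)$ is all your argument actually requires, so nothing breaks either way. Second, your assertion $R(0)=0$ via $R(0\cdot 0)$ involves an implicit $0\cdot\infty$ convention; cleaner is to note that homogeneity gives $R(0)=tR(0)$ for all $t>0$, so $R(0)\in\{0,\infty\}$, and properness of $R$ together with the limit $t\to 0^+$ along any ray with $R(u)<\infty$ already yields $R^\star(cq)\geq 0$ without evaluating at $u=0$ at all.
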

Note that under the convexity condition and the homogeneity $R(cu)=|c|R(u)$, that is, the  regularisation functional $R$ is sublinear. Hence, the proof follows from general results on sublinear functionals in \cite[Section V]{hiriart2013convex}. 
Next we formulate an alternative approximate source condition for the unknown and the noise term in one-homogeneous case. 

\begin{assumption}\label{assumption:p=1}
We assume to have an approximate source condition of order $r_1\geq 0$ for the unknown, that is we require 
\begin{align}\label{eq:1source}
\inf_{w \in Y} \left\{ \norm{w}_Y^2 ~\bigg\vert~S(\mu^\dagger -K^*w) \leq \beta\right\}=C_1\beta^{-r_1}
\end{align}
when $\beta>0$ small enough. We also require similar condition of order $r_2\geq0$ for the noise term and assume
\begin{align} \label{eq:1sourcenoise}
\inf_{w \in Y} \left\{ \norm{w}_Y^2 ~\bigg\vert~S(\eta -K^*w) \leq \beta \right\}=C_2\beta^{-r_2}.
\end{align}
\end{assumption}
Notice carefully that in the case when we do not have strict source condition the corresponding parameter $r_j$ must be strictly positive. Before proceeding, let us record the following technical lemma:

\begin{lemma}\label{lem:aux}    
The minimum of a problem $$M=\inf_{\zeta\in\R_+} (a \zeta^s + b \zeta^{-t})$$ for $a,b,s,t>0$ is achieved at 
\begin{equation}
	\label{lem:aux_min_value}
	\zeta = \left(\frac {bt}{as}\right)^{\frac 1{s+t}}
\end{equation}
yielding a minimum 
\begin{equation*}
	M \simeq a^{\frac t{s+t}} b^{\frac s{s+t}}.
\end{equation*}
\end{lemma}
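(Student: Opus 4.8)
The plan is to treat this as an elementary one-variable optimization, with one small caveat about convexity. I would set $g(\zeta) = a\zeta^s + b\zeta^{-t}$ on $\R_+$ and first observe that $g$ is continuous and strictly positive, with $g(\zeta) \to \infty$ both as $\zeta \to 0^+$ (driven by $b\zeta^{-t}$, using $b,t>0$) and as $\zeta \to \infty$ (driven by $a\zeta^s$, using $a,s>0$). Hence the infimum is finite and attained at an interior point of $\R_+$ rather than on its boundary.

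Next I would locate that point by differentiating, $g'(\zeta) = as\zeta^{s-1} - bt\zeta^{-t-1}$, and solving $g'(\zeta)=0$, which gives $as\zeta^{s+t} = bt$, i.e. the unique positive root $\zeta = (bt/as)^{1/(s+t)}$, exactly \eqref{lem:aux_min_value}. The one point worth flagging is that $g$ need \emph{not} be globally convex when $s<1$, since $\zeta^s$ is then concave; so I would not argue via convexity. Instead I would note that $\zeta \mapsto \zeta^{s+t}$ is strictly increasing, so $g'(\zeta)=0$ has a \emph{unique} positive solution, and combined with the boundary blow-up from the first step this single critical point is forced to be the global minimizer.

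Finally I would substitute $\zeta$ back into $g$. Writing $\zeta^s = (bt/as)^{s/(s+t)}$ and $\zeta^{-t} = (as/bt)^{t/(s+t)}$ and collecting the powers of $a$ and $b$, both terms factor as $a^{t/(s+t)} b^{s/(s+t)}$ times a factor depending only on $s,t$ (namely $(t/s)^{s/(s+t)}$ and $(s/t)^{t/(s+t)}$, respectively). Summing yields
$$ M = a^{t/(s+t)} b^{s/(s+t)} \left( (t/s)^{s/(s+t)} + (s/t)^{t/(s+t)} \right), $$
and since the bracketed factor is independent of $a$ and $b$, this is precisely $M \simeq a^{t/(s+t)} b^{s/(s+t)}$ in the paper's notation. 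The only genuinely non-routine step is the convexity caveat in the second paragraph; the rest is bookkeeping of exponents.
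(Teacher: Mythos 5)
Your proof is correct and follows essentially the same route as the paper: solve $g'(\zeta)=as\zeta^{s-1}-bt\zeta^{-t-1}=0$ for the unique critical point \eqref{lem:aux_min_value} and substitute back to read off $M \simeq a^{t/(s+t)}b^{s/(s+t)}$. The paper's proof is terser (it simply invokes ``variational calculus'' and writes $M = a(bt/(as))^{s/(s+t)}(1+s/t)$, which matches your expression), while you additionally justify that the infimum is attained at the interior critical point via the boundary blow-up and uniqueness of the root --- a worthwhile caveat, since as you note $g$ need not be convex for $s<1$.
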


\begin{proof}
Variational calculus yields 
\begin{equation*}
	a s \zeta^{s-1} - b t \zeta^{-t-1} = 0
\end{equation*}
at the minimum and hence \eqref{lem:aux_min_value} holds. Moreover, we obtain
\begin{equation*}
	M = a\left(\frac{bt}{as}\right)^{\frac s{s+t}} \left(1 + \frac st\right)
	\simeq a^{\frac t{s+t}} b^{\frac s{s+t}}.
\end{equation*}
\end{proof}

\begin{thm} \label{onehomthm1}
Let $X$ be a Banach space and  $R(u)=\|u\|_X$. Suppose that Assumption \ref{assumption:p=1} is satisfied with some orders $r_1,r_2\geq 0$. For the choice $\alpha\simeq\delta^\kappa$ where 
\begin{equation*}
\kappa = 
\begin{cases}
\frac{(1+r_1)(2+r_2)}{(2+r_1)(1+r_2)} & {\rm for}\; r_1 \leq r_2\; {\rm and}\\
1 & {\rm for}\; r_2<r_1,
\end{cases}
\end{equation*}
we have that
\begin{equation*}
	D_R^{\mu_\alpha^\delta,\mu^\dagger}(u_\alpha^\delta,u^\dagger) 
\lesssim
\begin{cases}
\delta^{\frac{2+r_2}{(2+r_1)(1+r_2)}} & {\rm for}\; r_1 \leq r_2\; {\rm and}\\
\delta^\frac{1}{1+r_1} & {\rm for}\; r_2<r_1.
\end{cases}
\end{equation*}
\end{thm}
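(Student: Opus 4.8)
The plan is to specialise the general bound of Theorem~\ref{deterministicestimates} to the one-homogeneous case, where $\theta=0$ and, by Example~\ref{ex:one-homogeneous}, the constant is $C_0(u_\alpha^\delta,u^\dagger)=R(u_\alpha^\delta)+R(u^\dagger)$. The first task is therefore to turn $C_0$ into a genuine ($\delta$- and $\alpha$-independent) constant, since it still involves the unknown minimiser. For this I would invoke the a-priori estimate in the form~\eqref{eq:apriori_e}: as long as the ratio $\delta/\alpha$ stays bounded, i.e. $\alpha\gtrsim\delta$, one obtains $R(u_\alpha^\delta)\lesssim R(u^\dagger)$ and hence $C_0\lesssim 1$. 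Keeping $\alpha$ at least of order $\delta$, i.e. restricting the exponent to $\kappa\le 1$, is precisely what secures this uniform bound, and — as will be seen — it is also the mechanism producing the two regimes in the statement.

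Next I would evaluate the two source-condition quantities on the right-hand side of Theorem~\ref{deterministicestimates}. By Lemma~\ref{lemma:p=1}, the term $\zeta R^\star((K^*w-\vartheta)/\zeta)$ equals $0$ when $S(K^*w-\vartheta)\le\zeta$ and $+\infty$ otherwise, so that
\begin{equation*}
	e_{\alpha,\zeta}(\vartheta)=\frac{\alpha}2\inf_{w\in Y}\left\{\,\norm{w}_Y^2~\big|~S(\vartheta-K^*w)\le\zeta\,\right\}.
\end{equation*}
Inserting the orders from Assumption~\ref{assumption:p=1} (with $\beta=\zeta$, legitimate because the optimal $\zeta$ chosen below tends to $0$) gives $e_{\alpha,\zeta_1}(\mu^\dagger)\simeq\alpha\,\zeta_1^{-r_1}$ and $e_{\delta,\zeta_2}(\eta)\simeq\delta\,\zeta_2^{-r_2}$.

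Substituting these into the $\theta=0$ form of Theorem~\ref{deterministicestimates} and using $C_0\lesssim 1$, the right-hand side decouples into two one-dimensional problems,
\begin{equation*}
	\inf_{\zeta_1>0}\left(C_0\,\zeta_1+\alpha\,\zeta_1^{-r_1}\right)+\frac{\delta}\alpha\inf_{\zeta_2>0}\left(C_0\,\zeta_2+\delta\,\zeta_2^{-r_2}\right),
\end{equation*}
each of the shape handled by Lemma~\ref{lem:aux}. This yields a first contribution of order $\alpha^{1/(1+r_1)}$ and a second of order $\delta^{(2+r_2)/(1+r_2)}/\alpha$, so that $D_R^{\mu_\alpha^\delta,\mu^\dagger}(u_\alpha^\delta,u^\dagger)\lesssim \alpha^{1/(1+r_1)}+\delta^{(2+r_2)/(1+r_2)}/\alpha$.

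It remains to optimise $\alpha\simeq\delta^\kappa$ under the constraint $\kappa\le 1$. Balancing the two contributions gives the unconstrained optimum $\kappa_{\mathrm{bal}}=\frac{(1+r_1)(2+r_2)}{(2+r_1)(1+r_2)}$, and one checks that $\kappa_{\mathrm{bal}}\le 1$ precisely when $r_1\le r_2$. In that regime the balance point is admissible, both terms equal $\delta^{(2+r_2)/((2+r_1)(1+r_2))}$, and the first rate follows. When $r_2<r_1$ one has $\kappa_{\mathrm{bal}}>1$, the constraint $\alpha\gtrsim\delta$ becomes active, and the constrained minimum is attained at the boundary $\kappa=1$; there the two contributions are $\delta^{1/(1+r_1)}$ and $\delta^{1/(1+r_2)}$, and since $r_2<r_1$ the former dominates, giving $\delta^{1/(1+r_1)}$. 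I expect the genuine obstacle to be the opening step — converting the implicit, solution-dependent $C_0$ into a true constant through~\eqref{eq:apriori_e} and recognising that the $\alpha\gtrsim\delta$ coupling it demands is exactly what splits the result into the two stated cases; once $C_0$ is controlled, Lemmas~\ref{lemma:p=1} and~\ref{lem:aux} reduce everything to bookkeeping with exponents.
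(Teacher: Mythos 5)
Your proposal is correct and follows essentially the same route as the paper's proof: specialise Theorem~\ref{deterministicestimates} with $\theta=0$, use Lemma~\ref{lemma:p=1} to turn $e_{\alpha,\zeta}$ into the constrained form, control $C_0(u_\alpha^\delta,u^\dagger)$ via the a-priori estimate (the paper carries the factor $1+\delta^{r_3}$ with $r_3=2-\kappa+r_2(1-\kappa)>0$ for $\kappa\le 1$, which is exactly your $C_0\lesssim 1$ observation), insert Assumption~\ref{assumption:p=1}, and optimise via Lemma~\ref{lem:aux}. Your balancing of $\kappa$ and the boundary analysis at $\kappa=1$ for $r_2<r_1$ reproduce the paper's case distinction and rates exactly.
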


\begin{proof}
Using Lemma \ref{lemma:p=1} we can write 
\begin{equation}
	\label{eq:one_homog_edelta}
	e_{\delta,\zeta}(\eta) = \frac \delta 2 \inf_{w \in Y} \left\{ \norm{w}_Y^2 ~\bigg\vert~S(\eta -K^*w) \leq \zeta \right\}.
\end{equation}
Recall from Example \ref{ex:one-homogeneous} that the one-homogeneous case corresponds to parameter $\theta=0$ in condition \eqref{bregmanscaling} and $C_0(u,v) = R(u)+R(v)$. The a priori estimate in Proposition \ref{aprioriest} gives us
\begin{align*}
R(u_\alpha^\delta)\leq \frac{1+\gamma}{1-\gamma}R(u^\dagger)+\frac{\delta^2}{2\alpha(1-\gamma)}\inf_{w\in Y}\{\|w\|_Y^2\ |\ S(K^*w-\eta)\leq\frac{\alpha\gamma}{\delta}\}
\end{align*} 
for any $\gamma\in[0,1)$ and $w\in Y$.
Now it follows from Theorem \ref{deterministicestimates} that
\begin{eqnarray}
	\label{eq:one_homog_D_R_est}
	D_R^{\mu_\alpha^\delta,\mu^\dagger}(u^\delta_\alpha,u^\dagger)
	& \leq & \inf_{\zeta_1,\zeta_2 \in \R_+^2}\left(\zeta_1 C_0(u^\delta_\alpha,u^\dagger)
	+ e_{\alpha,\zeta_1}(\mu^\dagger) +
	\frac \delta \alpha \zeta_2 C_0(u^\delta_\alpha,u^\dagger)
	+\frac{\delta}{\alpha} e_{\delta,\zeta_2}(\eta)\right) \nonumber \\
	&\lesssim &  M_1+M_2.
\end{eqnarray}
where 
\begin{align*}
M_1 = \inf_{\zeta_1\in \R_+}\bigg\{ \zeta_1\bigg(1+\frac{\delta}{\alpha} e_{\delta,\frac{\alpha\gamma}{\delta}}(\eta) \bigg)+e_{\alpha,\zeta_1}(\mu^\dagger)\bigg\}
\end{align*}
and 
\begin{align*}
M_2 = \inf_{\zeta_2\in \R_+}\bigg\{ \frac{\zeta_2\delta}{\alpha}\bigg(1+\frac{\delta}{\alpha} e_{\delta,\frac{\alpha\gamma}{\delta}}(\eta) \bigg)+\frac{\delta}{\alpha} e_{\delta,\zeta_2}(\eta)\bigg\}.
\end{align*}
From assumption \ref{assumption:p=1} we get the following estimates:
\begin{align*}
e_{\delta,\frac{\alpha\gamma}{\delta}}(\eta) & \lesssim \delta^{1+r_2}\alpha^{-r_2}\gamma^{-r_2},\\
e_{\alpha,\zeta_1}(\mu^\dagger) & \lesssim \alpha\zeta_1^{-r_1}, \\
e_{\delta,\zeta_2}(\eta) & \lesssim \delta\zeta_2^{-r_2}. 
\end{align*}
By assuming that $\alpha\simeq\delta^\kappa$ with some $\kappa>0$ we can write 
\begin{align*}
D_R^{\mu_\alpha^\delta,\mu^\dagger}(u^\delta_\alpha,u^\dagger) \leq & \inf_{\zeta_1 \in \R_+} \bigg\{(1+\delta^{r_3})\zeta_1+\delta^\kappa\zeta_1^{-r_1}\bigg\}+\inf_{\zeta_2 \in \R_+} \bigg\{\delta^{1-\kappa}(1+\delta^{r_3})\zeta_2+\delta^{2-\kappa}\zeta_2^{-r_2}\bigg\}.
\end{align*}
Above $r_3=2-\kappa+r_2(1-\kappa)>0$ when $\kappa\leq1$. 
Now by Lemma \ref{lem:aux} we get estimate 
\begin{align*}
M_1+M_2\simeq \delta^{\frac{\kappa}{1+r_1}}+\delta^{\frac{(1-\kappa)r_2+2-\kappa}{1+r_2}}.
\end{align*}
Optimizing the above we get $\kappa=\frac{(1+r_1)(2+r_2)}{(2+r_1)(1+r_2)}$ when $r_1\leq r_2$ which gives us the convergence rate 
\begin{align*}
D_R^{\mu_\alpha^\delta,\mu^\dagger}(u^\delta_\alpha,u^\dagger) 
 \lesssim \delta^{\frac{2+r_2}{(1+r_2)(2+r_1)}}.
\end{align*}
In the case $r_1\geq r_2$ we choose $\kappa=1$ to get 
\begin{align*}
D_R^{\mu_\alpha^\delta,\mu^\dagger}(u^\delta_\alpha,u^\dagger) 
 \lesssim \delta^{\frac{1}{1+r_1}}.
\end{align*}
\end{proof}

\begin{cor}
If in addition to the assumptions of Theorem \ref{onehomthm1} we assume the exact source condition for the unknown $u^\dagger$, i.e., $r_1=0$, we get a convergence rate
\begin{align*}
D_R^{\mu_\alpha^\delta,\mu^\dagger}(u^\delta_\alpha,u^\dagger) \simeq \delta^\frac{2+r_2}{2(1+r_2)}.
\end{align*}
\end{cor}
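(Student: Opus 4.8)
The plan is to specialize Theorem~\ref{onehomthm1} to the case $r_1 = 0$, which corresponds to an \emph{exact} source condition for $u^\dagger$. Under this hypothesis the source-condition term for $\mu^\dagger$ becomes trivial, so the whole error is driven by the noise term, and the optimization over $\kappa$ simplifies considerably. My strategy is to revisit the two convergence rates produced in the proof of Theorem~\ref{onehomthm1} under $r_1 = 0$, check which branch of the piecewise formula applies, and verify that both the upper and matching lower rates coincide so that the estimate is an equivalence $\simeq$ rather than merely $\lesssim$.

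\medskip

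First I would substitute $r_1 = 0$ into the exponent $\kappa$ from Theorem~\ref{onehomthm1}. Since $r_2 \geq 0 = r_1$, we are always in the regime $r_1 \leq r_2$, so the relevant branch gives
\begin{equation*}
\kappa = \frac{(1+r_1)(2+r_2)}{(2+r_1)(1+r_2)} \bigg|_{r_1 = 0} = \frac{2+r_2}{2(1+r_2)},
\end{equation*}
which is strictly positive and at most $1$. Plugging this same value of $r_1 = 0$ into the corresponding rate exponent yields
\begin{equation*}
\frac{2+r_2}{(2+r_1)(1+r_2)} \bigg|_{r_1 = 0} = \frac{2+r_2}{2(1+r_2)},
\end{equation*}
which is exactly the claimed exponent. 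Thus the upper bound $D_R^{\mu_\alpha^\delta,\mu^\dagger}(u_\alpha^\delta,u^\dagger) \lesssim \delta^{(2+r_2)/(2(1+r_2))}$ follows \emph{immediately} from Theorem~\ref{onehomthm1} with no new work beyond the arithmetic substitution.

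\medskip

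The genuinely new content is upgrading $\lesssim$ to $\simeq$, i.e.\ establishing a matching \emph{lower} bound. For this I would return to the structure $M_1 + M_2$ in the proof of Theorem~\ref{onehomthm1}. With $r_1 = 0$ and the exact source condition, $e_{\alpha,\zeta_1}(\mu^\dagger)$ contributes no blow-up as $\zeta_1 \to 0$, so the term $M_1$ degenerates; the dominant contribution is $M_2 \simeq \delta^{((1-\kappa)r_2 + 2 - \kappa)/(1+r_2)}$, and at $\kappa = (2+r_2)/(2(1+r_2))$ both surviving rates collapse to the single exponent $(2+r_2)/(2(1+r_2))$. I expect the main obstacle to be that Theorem~\ref{onehomthm1} and Lemma~\ref{lem:aux} were stated only as upper bounds, so to obtain a true equivalence one must argue that the approximate-source-condition Assumption~\ref{assumption:p=1} furnishes \emph{equalities} (it is stated with $=$, not $\lesssim$), and that the generalized Young inequality used to derive \eqref{eq:bregman_dist_est_first} is tight in the homogeneous case via Lemma~\ref{lemma:p=1}. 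Granting these exact relations, the balance in Lemma~\ref{lem:aux} is achieved simultaneously in both directions, so the minimizing $\zeta_2$ realizes the rate from below as well as above, giving the asserted $\simeq$.
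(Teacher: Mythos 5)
Your first half is exactly the paper's (implicit, one-line) argument: with $r_1=0$ one always sits in the branch $r_1\leq r_2$, the choice $\kappa=\frac{(1+0)(2+r_2)}{(2+0)(1+r_2)}=\frac{2+r_2}{2(1+r_2)}$ applies, and the rate exponent $\frac{2+r_2}{(2+r_1)(1+r_2)}$ collapses to the same value $\frac{2+r_2}{2(1+r_2)}$. That substitution is all the paper does, and it is correct; the corollary is an immediate specialization of Theorem \ref{onehomthm1}.

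The second half, where you try to upgrade $\lesssim$ to a genuine two-sided $\simeq$, contains a real gap. The chain leading to \eqref{eq:bregman_dist_est_first} is built from one-directional estimates that cannot be reversed: the generalized Young inequality applied to $\langle \delta\eta-\alpha\mu^\dagger, u_\alpha^\delta-u^\dagger\rangle_{X^*\times X}$, the bound $R(u_\alpha^\delta-u^\dagger)\leq C_0(u_\alpha^\delta,u^\dagger)$ from \eqref{bregmanscaling} with $\theta=0$, the discarding of the nonnegative residual term $\Vert K(u_\alpha^\delta-u^\dagger)\Vert_Y^2$, and the a-priori estimate on $R(u_\alpha^\delta)$. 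The equality sign in Assumption \ref{assumption:p=1} only pins down the value of the distance-type infimum, i.e.\ of $e_{\delta,\zeta}(\eta)$ via Lemma \ref{lemma:p=1}; together with Lemma \ref{lem:aux} this makes the \emph{upper bound} $M_1+M_2$ two-sidedly of order $\delta^{(2+r_2)/(2(1+r_2))}$, but it says nothing about the Bregman distance from below. Indeed no such pointwise lower bound can hold: as Example \ref{ex:one-homogeneous} shows, the symmetric Bregman distance of a one-homogeneous functional is degenerate ($D_R^{p,q}(u,v)=0$ is possible with $u\neq v$), so $D_R^{\mu_\alpha^\delta,\mu^\dagger}(u_\alpha^\delta,u^\dagger)$ may vanish identically in favorable instances, contradicting any lower bound of order $\delta^{(2+r_2)/(2(1+r_2))}$. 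The $\simeq$ in the corollary's statement is therefore best read as asserting that the rate produced by the method is exactly this exponent (the balancing in Lemma \ref{lem:aux} is sharp for the bound), not as a two-sided estimate on the error itself; your instinct that $\simeq$ would require a matching lower bound on $D_R$ is sound, but the route you sketch cannot deliver it, and the honest conclusion is the $\lesssim$ rate of Theorem \ref{onehomthm1}.
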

  
We finally provide some examples of one-homogeneous functionals and the meaning of the approximate source conditions in such cases:
\begin{example} \label{firstexample}
We start with a slightly artificial example, which however provides consistency with the linear theory. Assume $X$ is a Hilbert space and let $R(u) = \Vert u  \Vert_X$. Then $S(v) = \Vert v \Vert_X$ and for $u\neq 0$ we have $\partial R(u)=\left\{\frac{u}{\Vert u \Vert_X} \right\}$. In  \eqref{eq:1source} we thus look for the norm of $w$ when 
$$ \left\| \frac{u^\dagger}{\Vert u^\dagger \Vert_X} - K^* w \right\|_X \leq \beta. $$
Setting $\tilde w = w \Vert u^\dagger \Vert_X$, $\tilde \beta = \beta \Vert u^\dagger \Vert_X$, and $\tilde C_1 = C_1 \Vert u^\dagger \Vert^{r_1}_X$ we can reformulate the approximate source condition in terms of $\tilde \beta$ tending to zero as
$$
\inf_{\tilde w \in Y} \left\{ \norm{\tilde w}_Y^2 ~\bigg\vert~\Vert u^\dagger -K^*\tilde w\Vert_X \leq \tilde \beta\right\}=\tilde C_1 \tilde \beta^{-r_1},
$$ 
which is related to the approximate source condition for the regularization with the quadratic norm $\frac1{2}\Vert u \Vert_X^2$, whose subdifferential is $\{u\}$, see Assumption \ref{ass:p>1_source}.

We can further relate the approximate source condition to standard source conditions in the linear case. Assume that $K$ is a compact operator and let $u^\dagger = (K^* K)^\nu v$ for $v \in X$ and $0< \nu < \frac{1}2$. Then a simple calculation based on the singular value expansion shows that for each $\beta > 0$ there exists $w$ with $\Vert u^\dagger - K^* w \Vert_X \leq \beta$ and 
$\Vert w \Vert_Y \sim \beta^{1-1/2\nu}.$ Hence, an approximate source condition is satisfied with $r_1 \rightarrow 0$ as $\nu \rightarrow \frac{1}2$ and $r_1\rightarrow \infty$ as $\nu \rightarrow 0$.
\end{example}	

\begin{example} 
We proceed to one of the most canonical examples of a one-homogeneous functional, namely $X=\ell^1(\N)$, i.e. $u=(u_i)_{i=1}^\infty$, $u_i \in \R$, and 
$$ R(u) = \Vert u \Vert_X = \sum_{i=1}^\infty |u_i|. $$
In order to obtain a first insight we also consider a simple diagonal operator $K: \ell^1(\N) \rightarrow \ell^2(\N)$, $(u_i) \mapsto (k_i u_i)$, with a decreasing sequence $k_i$ of nonzero real values converging to zero. In addition we require that $K: \ell^2(\N) \to \ell^2(\N)$ is bounded.
 Then the dual norm is given by $S(v) = \Vert v \Vert_\infty$, i.e., in the definition of the source condition \eqref{eq:1source} for $\beta$ arbitrarily small we need to find $w$ such that
$$ \sup_{i \in \N} \vert \mu_i^\dagger - k_i w_i \vert = \Vert \mu^\dagger - K^*w \Vert_\infty = S(\mu^\dagger - K^*w) \leq \beta. $$
Now assume that $u^\dagger$ has an infinite support, i.e., there exists a nontrivial sequence $i_j$ with $u_{i_j}^\dagger \neq 0$ and hence $\vert \mu_{i_j}^\dagger \vert = 1$. Then we conclude for $i_j$ sufficiently large (note that $k_{i_j} w_{i_j}$ converges to zero)
$$ 1- |k_{i_j}|~|w_{i_j}| \leq \vert \mu_{i_j}^\dagger - k_{i_j} w_{i_j} \vert \leq \beta. $$
This implies $|w_{i_j}|  \geq \frac{1-\beta}{|k_{i_j}|}$ for $i_j$ sufficiently large, hence the corresponding sequence $w$ cannot be an element of $\ell^2(\N)$. Thus, the source condition can only be satisfied for $u^\dagger$ having a finite support. 

On the other hand, if $u^\dagger$ has finite support contained in $\{1,\ldots,M\}$ we can choose a subgradient $\mu^\dagger$ with $\mu_i^\dagger = 0$ for $i > M$. Then for element $w$ with $w_i = \frac{\mu^\dagger_i}{k_i}$ for $i \leq M$ and $w_i=0$ else, we have
$\mu^\dagger = K^*w$ and $w$ has finite $\ell^2$-norm, i.e., a standard source condition is satisfied. We thus see that in this case the asymptotic source condition does not seem useful, it is as strong as the original source condition due to the special structure of the subgradients. This behaviour is related to the degenerate behaviour of the $\ell^1$-regularization, which has some phase transition from a well-posed finite dimensional to an ill-posed infinite dimensional problem depending on the support (cf. \cite{grasmair2011linear,flemming2015l1}). We mention however that it is easy to see that the set of subgradients $\mu^\dagger$ for which the approximate source condition holds is larger for $r_1 > 0$ than for the standard source condition $r_1 = 0$, indeed the set is strictly increasing with $r_1$. The implication of this fact for the error estimation is not clear at this moment however.
%

We finally mention that approximate source conditions are useful in any case to quantify large noise as in \eqref{eq:1sourcenoise}, since the elements $\eta$ are then arbitrary and not characterized by the structure of subgradients. The condition simply measures how well the noise can be approximated in the $\ell^\infty$-norm by elements $K^*w$. 
\end{example} 

\begin{example}
A synthesis of the last two examples is group sparsity in Hilbert spaces. For simplicity let $H$ be a single Hilbert space and $X=\ell^1(\N;H)$, $u=(u_i)_{i=1}^\infty$, $u_i \in H$, with 
$$ R(u) = \sum_{i=1}^\infty \Vert u_i \Vert_H. $$
A subgradient $\mu \in \partial R(u)$ is given by $\mu=(\mu_1,\mu_2,\ldots)$ with $\mu_i \in H$ such that $\Vert \mu_i \Vert_H \leq 1$ and $\mu_i = \frac{u_i}{\Vert u_i \Vert_H}$ if $u_i \neq 0$. 
As in the previous example of $\ell^1$-regularization one can verify that an approximate source condition can only hold if only a finite number of the $u_i^\dagger$ are different from zero. On the other hand a source condition is not automatically satisfied in this case, we also need $\mu_i^\dagger = (K^* w)_i$, which requires analogous properties of the $u_i^\dagger$ as for $u^\dagger$ in Example \ref{firstexample}.
\end{example}

\begin{example} We finally provide a standard example as already used in \cite{burgerosher}, namely total variation denoising by the Rudin--Osher--Fatemi (ROF) functional (cf. \cite{rudin1992nonlinear}). This means we assume $D \subset \R^2$, $X=BV(D)$, $Y=L^2(D)$ and $K$ is the embedding operator between these spaces. The regularization functional is given by 
\begin{equation}
R(u) = \sup_{\varphi \in C_0^\infty(D), \Vert \varphi \Vert_\infty \leq 1} \int_{D}
\nabla \cdot \varphi u~dx.
\end{equation}
It is well-known that source conditions for ROF denoising are related to square integrability of the curvature of level sets (cf. \cite{burgerosher,chambolle2016geometric}). On the other hand the approximation properties of ROF are particularly bad if the exact solution is the characteristic function of a square, whose curvature is just a Radon measure on the jump set (cf. \cite{caselles2015total}). Hence, a natural conjecture is that approximate source conditions with $0<r_1<\infty$ are related to $q$-integrability of the curvature of level sets for $1<q<2$, which we make more explicit in the following. Assume for this sake that $u$ is the indicator function of a simply connected compact subset $D_0 \subset D$, such that $\Gamma = \partial D_0$ is of class $C^1$ and the curvature $\kappa$ is an element of $L^q(\Gamma)$. An elementary computation then yields that the normal and tangent fields are H\"older continuous along $\Gamma$ with exponent $\gamma=1-\frac{1}q$. Using this kind of regularity one obtains that the signed distance function $b_\Gamma$ is of class $C^{1,\gamma}$ in a neighbourhood of $\Gamma$ and the curvature of level sets $\Delta b_\Gamma $ is $q$-integrable in this neighbourhood. Now, similar to \cite{burgerosher,tvzoo}, we can construct a subgradient $\mu$ of the form 
$$\mu = \nabla \cdot g, \quad g = \psi(b_\Gamma) \nabla b_\Gamma  $$
with $\psi$ be a continuously differentiable function with local support around zero, $\psi(0)=1$ and $0 \leq \psi \leq 1$ else. 
For this subgradient we easily verify $\Vert g \Vert_{L^\infty} \leq 1$ and $\Vert \mu \Vert_{L^p} < \infty$.   

The dual norm of $BV$ given by
$$ S(v) = \inf\{\Vert h \Vert_{L^\infty}~|~ \nabla \cdot h = v \}, $$
thus in the  approximate source condition we know that $S(\mu-K^*w) \leq \beta$ as soon as we find any $h$ with $ \Vert h -g \Vert_{L^\infty} \leq \beta$. A sufficient condition for the approximate source condition is thus
$$ \inf \{ \Vert \nabla \cdot h \Vert_{L^2}~|~ \Vert h -g \Vert_{L^\infty} \leq \beta \} \leq C \beta^{-r_1}. $$
To verify such a condition let $G$ be a standard kernel with unit integral such as the Gaussian, $G_\epsilon = \epsilon^{-2} G(\frac{\cdot}\epsilon)$ and $h=G_\epsilon*g$. Then it is a standard computation for convolutions to show that 
$$ \Vert G_\epsilon*g - g \Vert_{L^\infty} \leq C_1 \epsilon^\gamma, \qquad \Vert \nabla \cdot G_\epsilon*g \Vert_{L^2} \leq C_2 \epsilon^{2(1-2/q)}=C_2 \epsilon^{2(2\gamma -1)}. $$
For $q >1$ we obtain $\gamma > 0$ and hence we can choose $\beta \sim \epsilon^\gamma$, which implies an approximate source condition with $r_1 = \frac{2}\gamma-4 = \frac{4-2q}{q-1}.$ With $q=2$ we recover the standard source condition for square integrable curvature, with $q \rightarrow 1$ we obtain $r_1 \rightarrow \infty$.
\end{example}

\subsection{Regularization by $p$-homogeneous functional for $1<p<\infty$}

In this section we consider regularization with functionals of type $R(u) = \frac 1 p \norm{u}^p_X$ for $1<p<\infty$.
Below $p,q\in(1,\infty)$ are H\"older conjugates, i.e., 
\begin{align*}
\frac{1}{p}+\frac{1}{q}	=1. 
\end{align*} 
Here we utilize additional assumptions regarding the Banach space $X$. Let $J_p: X \to X^*$ denote the set-valued duality mapping
\begin{equation*}
	J_p(u) = \{\mu \in  X^* \; | \; \langle \mu, u \rangle_{X^*\times X} = \norm{u}_X \norm{\mu}_{X^*} \quad {\rm and} \quad
\norm{\mu}_{X^*} = \norm{u}^{p-1}_X\}.
\end{equation*}
A Banach space $X$ is said to be \emph{$p$-convex} if there exists a constant $c_p>0$ such that
\begin{equation*}
	\frac 1 p \norm{u-v}_X^p \geq \frac 1 p \norm{u}_X^p - \left\langle j_p^X(u),v \right\rangle_{X^*\times X} +\frac{c_p}{p}\norm{v}^p_X
\end{equation*}
for all $u,v\in X$ and all $j_p \in J_p$.
Moreover, $X$ is called \emph{$p$-smooth} if there exists a constant $G_p>0$ such that 
\begin{equation*}
	\frac 1 p \norm{u-v}^p_X \leq \frac 1 p \norm{u}^p_X - \left\langle j_p^X(u),v\right\rangle_{X^*\times X} + \frac {G_p}p \norm{v}^p_X
\end{equation*}
for all $u,v\in X$ and all $j_p \in J_p$. The basic consequences and properties of these geometrical assumptions are listed in \cite{schusterbuch}. 
For what follows, an important connection between the convexity and smoothness assumptions is given in \cite[Thm 2.52]{schusterbuch}:
$X$ is p-smooth if and only if $X^*$ is $q$-convex. Moreover, $X$ is p-convex if and only if $X^*$ is $q$-smooth. Some examples of $\max \{2,p\}$-convex and $\min\{2,p\}$-smooth spaces are sequence spaces $\ell^p$, Lebesgue spaces $L^p$, and Sobolev spaces $W^{m,p}$. Notice also that in this Section we consider a $p$-smooth Banach space $X$ for some $p>1$. In that case it is well known (see \cite[Remark 2.38]{schusterbuch}) that the duality mapping $J_p$ is single-valued.

Next we define an alternative approximate source condition for the unknown and noise in case $R(u)=\frac{1}{p}\norm{u}_X^p$ for $1<p<\infty$.
\begin{assumption}\label{ass:p>1_source}
We assume to have an approximate source conditions of order $r_1\geq0$ for the unknown, i.e., we require that
\begin{align*}
\inf_{w\in Y}\Big\{\frac{1}{\beta}\|K^*w-\mu^\dagger\|^q_{X^*}+\frac{1}{2}\|w\|_Y^2\Big\}\leq C\beta^{-r_1}
\end{align*}
when $\beta>0$ is small enough. We also require a similar condition of order $r_2\geq0$ for the noise term and assume
\begin{align*}
\inf_{w\in Y}\Big\{\frac{1}{\beta}\|K^*w-\eta\|^q_{X^*}+\frac{1}{2}\|w\|_Y^2\Big\}\leq C\beta^{-r_2}.
\end{align*}
\end{assumption}
For comparison of the above approximate source condition to distance function see Remark \ref{Rem:comparison}.

\subsubsection*{Case $1<p<2$}  

\begin{thm} \label{generalpthm}
Suppose that the Banach space $X$ is $p$-smooth and $2$-convex and $R(u) = \frac 1p \norm{u}^p_X$ for some $1<p<2$. Moreover, suppose that Assumption \ref{ass:p>1_source} is satisfied with some orders $r_1, r_2\geq 0$ and $r_1<1$. Then for the choice  $\alpha\simeq\delta^\kappa$ where 
\begin{equation*}
\kappa =    
\begin{cases} 
\frac{\nu_1\nu_2}{\nu_1\nu_2+q(r_2-r_1)} & {\rm for}\; r_1 \leq r_2\; {\rm and}\\
1 & {\rm for}\; r_2<r_1<1
\end{cases}
\end{equation*}
we have convergence
\begin{equation*}
	D_R^{\mu_\alpha^\delta,\mu^\dagger}(u_\alpha^\delta,u^\dagger) 
\leq 
\begin{cases}
C_p\delta^{\frac{2\nu_2(1-r_1)}{\nu_1\nu_2+q(r_2-r_1)}} & {\rm for}\; r_1 \leq r_2\; {\rm and}\\
C_p\delta^\frac{2(1-r_1)}{2+r_1 (q-2)} & {\rm for}\; r_2<r_1<1.
\end{cases}
\end{equation*}
Above we have denoted $\nu_i=2+r_i(q-2)$ and $q=\frac{p}{p-1}$. For the constant $C_p$ we have $C_p\to\infty$ when $p\to 2$.
\end{thm}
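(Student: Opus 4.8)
The plan is to mirror the argument for the one-homogeneous case (Theorem \ref{onehomthm1}), specializing each ingredient to $R(u)=\frac1p\norm{u}_X^p$, and then to carry out the scalar optimizations with Lemma \ref{lem:aux}. First I would record that the convex conjugate here is $R^\star(\xi)=\frac1q\norm{\xi}_{X^*}^q$, so that
\begin{equation*}
	E_{\alpha,\zeta}(w;\vartheta)=\frac{1}{q\zeta^{q-1}}\norm{K^*w-\vartheta}_{X^*}^q+\frac\alpha2\norm{w}_Y^2
	=\alpha\left(\frac{1}{q\alpha\zeta^{q-1}}\norm{K^*w-\vartheta}_{X^*}^q+\frac12\norm{w}_Y^2\right).
\end{equation*}
Taking the infimum over $w$ and matching the bracket to Assumption \ref{ass:p>1_source} with the choice $\beta\simeq\alpha\zeta^{q-1}$ (which is small for small $\delta$), I obtain the two fundamental bounds $e_{\alpha,\zeta_1}(\mu^\dagger)\lesssim\alpha^{1-r_1}\zeta_1^{-r_1(q-1)}$ and $e_{\delta,\zeta_2}(\eta)\lesssim\delta^{1-r_2}\zeta_2^{-r_2(q-1)}$. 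These play exactly the role that \eqref{eq:one_homog_edelta} plays in the one-homogeneous proof.

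The next, and most delicate, step is to pin down the exponent $\theta$ in the scaling condition \eqref{bregmanscaling}. Since $X$ is $p$-smooth the duality map $J_p$ is single-valued, so $\mu_u=J_p(u)$ and the symmetric Bregman distance is $D_R^{\mu_u,\mu_v}(u,v)=\langle J_p(u)-J_p(v),u-v\rangle_{X^*\times X}$. Here I would invoke the Xu--Roach type inequalities collected in \cite{schusterbuch}: the $2$-convexity of $X$ yields a lower bound of the form $D_R^{\mu_u,\mu_v}(u,v)\gtrsim (\norm{u}_X+\norm{v}_X)^{p-2}\norm{u-v}_X^2$. On sets where $\norm{u}_X,\norm{v}_X$ are bounded the prefactor is bounded below by a positive constant (using $p-2<0$), hence $\norm{u-v}_X^2\lesssim D_R^{\mu_u,\mu_v}(u,v)$ and therefore $R(u-v)=\frac1p\norm{u-v}_X^p\lesssim\big(D_R^{\mu_u,\mu_v}(u,v)\big)^{p/2}$. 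Thus \eqref{bregmanscaling} holds with $\theta=p/2\in(\tfrac12,1)$ and $C_\theta(u,v)$ bounded on sublevel sets of $R$. The a-priori estimate \eqref{eq:apriori_e}, combined with the $e$-bound for $\eta$ above, shows that $R(u_\alpha^\delta)\lesssim1$ uniformly for the prescribed $\alpha\simeq\delta^\kappa$, so that $C_\theta(u_\alpha^\delta,u^\dagger)\lesssim1$ and may be absorbed into the constant.

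With $\theta=p/2$ (so $1/(1-\theta)=2/(2-p)=\frac{2(q-1)}{q-2}$) I would insert everything into Theorem \ref{deterministicestimates}. Splitting the coupled term via $(\zeta_1+\frac\delta\alpha\zeta_2)^{2/(2-p)}\lesssim\zeta_1^{2/(2-p)}+(\frac\delta\alpha\zeta_2)^{2/(2-p)}$ decouples the problem into
\begin{equation*}
	M_1=\inf_{\zeta_1\in\R_+}\Big\{\zeta_1^{2/(2-p)}+e_{\alpha,\zeta_1}(\mu^\dagger)\Big\},
	\qquad
	M_2=\inf_{\zeta_2\in\R_+}\Big\{\Big(\tfrac\delta\alpha\Big)^{2/(2-p)}\zeta_2^{2/(2-p)}+\tfrac\delta\alpha\,e_{\delta,\zeta_2}(\eta)\Big\}.
\end{equation*}
Applying Lemma \ref{lem:aux} to each (with $s=\frac{2(q-1)}{q-2}$ and $t=r_i(q-1)$, using $s+t=(q-1)\nu_i/(q-2)$ and $s/(s+t)=2/\nu_i$) gives $M_1\simeq\alpha^{2(1-r_1)/\nu_1}$ and, after collecting powers, $M_2\simeq\delta^2\alpha^{-2(r_2(q-1)+1)/\nu_2}$. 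Writing $\alpha\simeq\delta^\kappa$ and balancing the two resulting $\delta$-exponents produces the stated $\kappa=\frac{\nu_1\nu_2}{\nu_1\nu_2+q(r_2-r_1)}$ for $r_1\le r_2$, with common rate $\delta^{2\nu_2(1-r_1)/(\nu_1\nu_2+q(r_2-r_1))}$ — the identity $(1-r_1)\nu_2+(r_2(q-1)+1)\nu_1=\nu_1\nu_2+q(r_2-r_1)$ being the one routine computation to verify. When $r_2<r_1$ this balancing value exceeds $1$ and violates the feasibility constraints ($\kappa\le1$ is needed for boundedness of the a-priori term), so one caps $\kappa=1$; since $r\mapsto\frac{1-r}{\nu(r)}$ is decreasing, $M_1$ then dominates $M_2$ and yields the rate $\delta^{2(1-r_1)/\nu_1}$. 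The main obstacle is genuinely the geometric step: extracting the sharp $\theta=p/2$ inequality from $2$-convexity with a constant controlled on bounded sets, and tracking that the factor $1/(1-\theta)=\frac{2}{2-p}$ (together with the Lemma \ref{lem:aux} constants) forces $C_p\to\infty$ as $p\to2$.
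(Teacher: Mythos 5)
Your proposal is correct and follows essentially the same route as the paper's proof: the Xu--Roach inequality for the $2$-convex space $X$ giving \eqref{bregmanscaling} with $\theta=p/2$, the conjugate computation $R^\star=\frac1q\norm{\cdot}_{X^*}^q$ turning Assumption \ref{ass:p>1_source} into the bounds $e_{\alpha,\zeta}\lesssim\alpha^{1-r_i}\zeta^{-r_i(q-1)}$, the split into $M_1,M_2$ optimized via Lemma \ref{lem:aux}, and the capping $\kappa=1$ when $r_2<r_1$. Your absorption of the $C_\theta(u_\alpha^\delta,u^\dagger)$ factor via the a-priori bound is exactly the paper's $(1+\delta^{r_3})$ term with the condition $r_3\geq0$, which, as in the paper, holds for the chosen $\kappa$ when $r_1<1$.
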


\begin{proof}
We can apply the Xu--Roach inequality II \cite[Thm. 2.40 (b)]{schusterbuch} in $X$ to obtain
\begin{equation*}
	D^{\mu_u, \mu_v}_R (u,v) = \langle j_p(u)-j_p(v),u-v \rangle_{X^*\times X}
		\geq C \max\{\norm{u}_{X}, \norm{v}_{X}\}^{p-2} \norm{u - v}^2_{X}.
\end{equation*}
This gives us an estimate 
\begin{align*}
R(u-v)\leq C_{\frac{p}{2}}(u,v) D^{\mu_u, \mu_v}_R (u,v)^{\frac{p}{2}}
\end{align*}
with
\begin{equation*}
	C_{\frac{p}{2}}(u,v) = \frac{C}{p}\max\left\{\norm{u}_{X}, \norm{v}_{X}\right\}^{\frac{p(2-p)}{2}}.
\end{equation*}
By applying the trivial upper bound $\max\left\{\norm{u}_{X}, \norm{v}_{X}\right\} \leq \norm{u}_{X} + \norm{v}_{X}$ and the a priori bound given in (\ref{eq:apriori_e}) for any $\gamma\in(0,1)$ we have  
\begin{equation*}
	C_{\frac{p}{2}}(u^\delta_\alpha, u^\dagger) \leq C\Big( \|u^\dagger\|^p_X + \frac{\delta}{\alpha} e_{\frac{\delta}{2},\frac{\alpha\gamma}{\delta}}(\eta))\Big)^{\frac{2-p}{2}}.
\end{equation*}
Considering Theorem \ref{deterministicestimates} we now obtain
\begin{eqnarray}
\label{eq:1<p<2_first_breg_estimate}  
D_R^{\mu_\alpha^\delta,\mu^\dagger}(u_\alpha^\delta,u^\dagger) & \lesssim &
  \inf_{(\zeta_1,\zeta_2) \in \mathbb{R}_+^2} 
 \bigg\{ \bigg(\zeta_1+ \frac{\delta}\alpha \zeta_2 \bigg)^{\frac{2}{2-p}} C_{\frac{p}{2}}(u^\delta_\alpha, u^\dagger)^{\frac{2}{2-p}}
+ \frac{2}{2-p}\Big(e_{\alpha,\zeta_1}(\mu^\dagger)+ \frac{\delta}{\alpha} e_{\delta,\zeta_2}(\eta)\Big)\bigg\}\nonumber\\
& \lesssim  & \!\!\! M_1+ M_2.
\end{eqnarray}
Above we have for $s= \frac 2 {2-p}$ that
\begin{align*}
M_1= \inf_{\zeta_1 \in \R_+} \left(\zeta_1^s \bigg(1+\frac{\delta}{\alpha}e_{\frac{\delta}{2},\frac{\alpha\gamma}{\delta}}(\eta)\bigg)+ e_{\alpha,\zeta_1}(\mu^\dagger)\right)
\end{align*}
and
\begin{align*}
M_2= \inf_{\zeta_2 \in \R_+}\left(\zeta_2^s \left(\frac{\delta}\alpha\right)^{s} \bigg(1 + \frac{\delta}{\alpha}e_{\frac{\delta}{2},\frac{\alpha\gamma}{\delta}}(\eta)\bigg)+ \frac{\delta}{\alpha} e_{\delta,\zeta_2}(\eta)\right).
\end{align*}
Since $R(u)=\frac{1}{p}\|u\|_X^p$ we can write 
\begin{align*}
e_{\alpha,\zeta}(\eta)
& = \inf_{w\in Y}\Big\{\zeta R^\star\Big(\frac{1}{\zeta} (K^*w-\eta)\Big)+\frac{\alpha}{2}\|w\|_Y^2\Big\}\\
& = \alpha \inf_{w\in Y}\Big\{\frac{1}{q\alpha\zeta^{q-1}}\|K^*w-\eta\|_{X^*}^q+\frac{1}{2}\|w\|_Y^2\Big\}.
\end{align*}
From Assumption \ref{ass:p>1_source} we directly obtain following estimates:
\begin{eqnarray}\label{eq:p<2_estimates}
e_{\frac{\delta}{2},\frac{\alpha\gamma}{\delta}}(\eta) & \lesssim & \gamma^{-t_2} \delta^{1-r_2+t_2} \alpha^{-t_2}, \nonumber\\
e_{\alpha,\zeta_1}(\mu^\dagger) & \lesssim & \alpha^{1-r_1} \zeta_1^{-t_1}, \nonumber\\
e_{\delta,\zeta_2}(\eta) & \lesssim & \delta^{1-r_2} \zeta_2^{-t_2},
\end{eqnarray}
where we have set $t_i=(q-1)r_i\geq0$. By assuming further that $\alpha \simeq \delta^\kappa$ for some $\kappa>0$, we can reduce the two upper-most estimates to
\begin{equation*}\label{source_estimate}
	e_{\frac{\delta}{2},\frac{\alpha\gamma}{\delta}}(\eta) \lesssim \delta^{1-r_2+(1-\kappa)t_2} \quad {\rm and}
	\quad e_{\alpha,\zeta_1}(\mu^\dagger)  \lesssim \delta^{\kappa(1-r_1)}\zeta_1^{-t_1}.
\end{equation*}

Applying all the estimates above to Bregman distance in \eqref{eq:1<p<2_first_breg_estimate} we get
\begin{align*}
D_R^{\mu_\alpha^\delta,\mu^\dagger}(u_\alpha^\delta,u^\dagger)  \lesssim
& \inf_{\zeta_1 \in \R_+}
\left\{(1+\delta^{r_3})\zeta_1^s + 
\delta^{\kappa(1-r_1)}\zeta_1^{-t_1}\right\} + \\
&  \inf_{\zeta_2 \in \R_+} \left\{\delta^{(1-\kappa)s}(1+ \delta^{r_3})\zeta_2^s
 + \delta^{2-\kappa-r_2} \zeta_2^{-t_2} \right\}
\end{align*}
where we have assumed that $r_3 = 1-r_2+(1-\kappa)(t_2+1)\geq 0$. Further, applying Lemma \ref{lem:aux} yields us 
\begin{equation*}
	M_1 \simeq  \delta^{\kappa(1-r_1)\frac s {s+t_1}}
\end{equation*}
and
\begin{equation*}
	M_2 \simeq \delta^{(1-\kappa)\frac{st_2}{s+t_2}}\delta^{(2-\kappa-r_2)\frac{s}{s+t_2}}.
\end{equation*}
Consequently, we can reduce the estimate to
\begin{align*}
	M_1 + M_2 & \simeq \delta^{\frac{2\kappa(1-r_1)}{2+(q-2)r_1}}
	+ \delta^{\frac{2(2+r_2(q-2)-\kappa(r_2(q-1)+1))}{2+(q-2)r_2}}\\
	& =  \delta^{\frac{2\kappa(1-r_1)}{\nu_1}}
	+ \delta^{\frac{2(\nu_2-\kappa(r_2(q-1)+1))}{\nu_2}}
\end{align*}
where $\nu_i=2+r_i(q-2)$. 

When $r_1\leq r_2$ the above expression is minimized at 
\begin{equation*}
	\kappa = \frac{\nu_1\nu_2}{\nu_1\nu_2+q(r_2-r_1)}
\end{equation*}
yielding convergence rate 
\begin{equation*}
D_R^{\mu_\alpha^\delta,\mu^\dagger}(u_\alpha^\delta,u^\dagger) \leq C_p\delta^{\frac{2\nu_2(1-r_1)}{\nu_1\nu_2+q(r_2-r_1)}}.
\end{equation*}
In order to attain convergence we have to assume $r_1<1$.  
If $r_2 < r_1<1$ the optimal convergence rate is achieved when $\kappa=1$ and we obtain
\begin{equation*}
D_R^{\mu_\alpha^\delta,\mu^\dagger}(u_\alpha^\delta,u^\dagger) 
\leq C_p\delta^{\min\left\{\frac{2(1-r_1)}{2+r_1 (q-2)},\frac{2(1-r_2)}{2+r_2 (q-2)}\right\}}
= C_p\delta^\frac{2(1-r_1)}{2+r_1 (q-2)}.
\end{equation*}
Above the constant $C_p\to\infty$ when $p\to2$. Note that with the chosen $\kappa$ the assumption $r_3\geq0$ is always true when $r_1<1$.
\end{proof}

\begin{cor}
If in addition to the assumptions of Theorem \ref{generalpthm} the exact source condition $r_1=0$ is satisfied the estimate
\begin{align*}
D_R^{\mu_\alpha^\delta,\mu^\dagger}(u_\alpha^\delta,u^\dagger) & \leq C_p\delta^\frac{2\nu_2}{2\nu_2+qr_2}
\end{align*} 
holds. Furthermore, notice that assuming an exact source condition on the noise leads to the standard convergence rate of ${\mathcal O}(\delta)$ in the classical setting \cite{schusterbuch}.
\end{cor}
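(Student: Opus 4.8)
The plan is to treat this statement purely as a specialization of Theorem \ref{generalpthm}, since no new estimate is needed: every bound has already been established in the theorem, and the corollary amounts to substituting $r_1 = 0$ into its conclusion and simplifying. The first thing I would observe is that because Assumption \ref{ass:p>1_source} requires $r_2 \geq 0$, the exact source condition $r_1 = 0$ automatically yields $r_1 = 0 \leq r_2$. Hence we are always in the first branch of Theorem \ref{generalpthm}, with the admissible parameter choice $\kappa = \frac{\nu_1\nu_2}{\nu_1\nu_2 + q(r_2-r_1)}$, and in particular the condition $r_3 \geq 0$ used there holds since $r_1 = 0 < 1$.

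Next I would compute the relevant constants. From $\nu_i = 2 + r_i(q-2)$ and $r_1 = 0$ we get immediately $\nu_1 = 2$. Substituting $r_1 = 0$ and $\nu_1 = 2$ into the rate exponent of the first case, namely $\frac{2\nu_2(1-r_1)}{\nu_1\nu_2 + q(r_2-r_1)}$, the numerator collapses to $2\nu_2$ and the denominator to $2\nu_2 + qr_2$. This gives exactly
\begin{equation*}
	D_R^{\mu_\alpha^\delta,\mu^\dagger}(u_\alpha^\delta,u^\dagger) \leq C_p \delta^{\frac{2\nu_2}{2\nu_2 + qr_2}},
\end{equation*}
which is the claimed estimate. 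The corresponding parameter choice simplifies consistently to $\kappa = \frac{2\nu_2}{2\nu_2 + qr_2}$.

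For the concluding remark, I would additionally impose an exact source condition on the noise, i.e.\ $r_2 = 0$. Then $\nu_2 = 2 + r_2(q-2) = 2$, and the exponent above reduces to $\frac{2 \cdot 2}{2 \cdot 2 + q \cdot 0} = 1$, so that the rate becomes $\delta^1$, recovering the classical ${\mathcal O}(\delta)$ behaviour. I do not expect any genuine obstacle here, as the argument is a direct algebraic specialization; the only points requiring a moment of care are confirming that the branch selection $r_1 \leq r_2$ is forced by $r_2 \geq 0$, and that the constant $C_p$ (which blows up as $p \to 2$, inherited from the Xu--Roach estimate in the theorem) is carried through unchanged.
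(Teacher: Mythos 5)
Your proposal is correct and matches the paper's intent exactly: the corollary is stated without a separate proof precisely because it is the direct substitution $r_1=0$ (hence $\nu_1=2$, and the branch $r_1\leq r_2$ forced by $r_2\geq 0$) into the first case of Theorem \ref{generalpthm}, which is what you carry out, including the consistent simplification $\kappa=\frac{2\nu_2}{2\nu_2+qr_2}$ and the check that $r_3\geq 0$ holds. Your verification of the closing remark, that $r_2=0$ gives $\nu_2=2$ and exponent $1$, i.e.\ the classical ${\mathcal O}(\delta)$ rate, is likewise exactly the intended computation.
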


\begin{rem}
Let us illustrate another bound for $R(u-v)$ obtained via the Xu--Roach inequalities.
Since $X^*$ is $q$-convex and $2$-smooth \cite[Thm 2.52 (b)]{schusterbuch} we can apply \cite[Lemma 2.63]{schusterbuch} and the Xu--Roach inequality II \cite[Thm. 2.40 (b)]{schusterbuch} in $X^*$ to obtain
\begin{equation*}
	D^{\mu_u, \mu_v}_R (u,v) = D^{u,v}_{R^\star} (\mu_u, \mu_v)
	\geq C \max\{\norm{\mu_u}_{X^*}, \norm{\mu_v}_{X^*}\}^{q-q} \norm{\mu_u - \mu_v}^q_{X^*}= C\norm{\mu_u - \mu_v}^q_{X^*}.
\end{equation*}
Next by Xu--Roach inequality IV \cite[Thm. 2.42]{schusterbuch} we obtain
\begin{equation*}
	\norm{\mu_u-\mu_v}_{X^*} \geq C \max\{\norm{\mu_u}_{X^*}, \norm{\mu_v}_{X^*}\}^{2-q} \norm{u-v}_X,
\end{equation*}
where we have considered the inequality in $X^*$ which is $2$-smooth by assumption. 

Combining the two inequalities above yields
\begin{equation*}
	R(u-v)  \leq \frac{C}{p}\max\{\norm{u}_{X}, \norm{v}_{X}\}^{p(2-p)} D^{\mu_u, \mu_v}_R(u,v)^{\frac p q}
\end{equation*}
since $p-q+(2-p)q = 0$.
\end{rem}

\subsubsection*{Case $p=2$}

Finally we simplify the estimates in the quadratic case:

\begin{thm}
Suppose that $X$ is a Banach space and $R(u) = \frac 12 \norm{u}^2_X$. Moreover, suppose that Assumption \ref{ass:p>1_source} is satisfied with some orders $r_1, r_2\geq 0$ and $r_1<1$. For the choice $\alpha\simeq\delta^\kappa$, where 
\begin{equation*}
\kappa = 
\begin{cases}
\frac{2}{2+r_2-r_1} & {\rm for}\; r_1 \leq r_2\; {\rm and}\\
1 & {\rm for}\; r_2<r_1<1,
\end{cases}
\end{equation*}
we get convergence
\begin{equation*}
	D_R^{\mu_\alpha^\delta,\mu^\dagger}(u_\alpha^\delta,u^\dagger) 
\lesssim 
\begin{cases}
\delta^{\frac{2(1-r_1)}{2+r_2-r_1}} & {\rm for}\; r_1 \leq r_2\; {\rm and}\\
\delta^{1-r_1} & {\rm for}\; r_2<r_1<1.
\end{cases}
\end{equation*}
\end{thm}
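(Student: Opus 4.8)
The plan is to treat this statement as the boundary case $p=2$ of Theorem \ref{generalpthm}. Since the constant $C_p$ there diverges as $p\to 2$, I cannot simply pass to the limit; instead I exploit that for $R(u)=\frac12\norm{u}_X^2$ the Bregman scaling condition \eqref{bregmanscaling} holds with $\theta=1$ and, crucially, with a constant $C_1$ that is \emph{independent} of $u$ and $v$. In the Hilbert case this is the exact identity $D_R^{\mu_u,\mu_v}(u,v)=\norm{u-v}_X^2=2R(u-v)$ from Example \ref{ex:two-homogeneous}; more generally the Xu--Roach inequality used in the proof of Theorem \ref{generalpthm}, specialized to $p=2$, gives $D_R^{\mu_u,\mu_v}(u,v)\geq 2C\,R(u-v)$ with universal $C$ because the factor $\max\{\norm{u}_X,\norm{v}_X\}^{p-2}$ collapses to $1$. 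This removes the dependence of $C_\theta$ on the a priori bound that complicated the case $1<p<2$, so no a priori estimate is needed here.

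Next I invoke the $\theta=1$ branch of Theorem \ref{deterministicestimates}, which yields
$$D_R^{\mu_\alpha^\delta,\mu^\dagger}(u_\alpha^\delta,u^\dagger)\leq\inf_{(\zeta_1,\zeta_2)\in\Sigma}\frac{e_{\alpha,\zeta_1}(\mu^\dagger)+\frac{\delta}{\alpha}e_{\delta,\zeta_2}(\eta)}{1-(\zeta_1+\frac{\delta}{\alpha}\zeta_2)C_1}.$$
I then compute the two source quantities from Assumption \ref{ass:p>1_source} with $q=2$: rewriting $e_{\alpha,\zeta}$ exactly as in the proof of Theorem \ref{generalpthm} and matching $\beta=2\alpha\zeta$ gives $e_{\alpha,\zeta_1}(\mu^\dagger)\lesssim\alpha^{1-r_1}\zeta_1^{-r_1}$ and $e_{\delta,\zeta_2}(\eta)\lesssim\delta^{1-r_2}\zeta_2^{-r_2}$, so that $\frac{\delta}{\alpha}e_{\delta,\zeta_2}(\eta)\lesssim\delta^{2-r_2}\alpha^{-1}\zeta_2^{-r_2}$.

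The delicate point, and the one place the argument differs structurally from the $\theta<1$ proof, is the denominator: one cannot send $\zeta_1,\zeta_2\to\infty$ to kill the factors $\zeta_i^{-r_i}$, since that would drive the denominator to zero. Because $C_1$ is a fixed constant, the resolution is to \emph{saturate} the constraint: choose $\zeta_1\simeq 1$ and $\zeta_2\simeq\alpha/\delta$ with small enough proportionality constants so that $(\zeta_1+\frac{\delta}{\alpha}\zeta_2)C_1\leq\frac12$, which keeps the denominator bounded below by a positive constant while making the numerator as small as the constraint permits. With these choices and $\alpha\simeq\delta^\kappa$ (where $\kappa\leq 1$ keeps $\zeta_2=\alpha/\delta$ large and validates the source estimates), the two terms become $\alpha^{1-r_1}=\delta^{\kappa(1-r_1)}$ and $\delta^{2-\kappa(1+r_2)}$, whence
$$D_R^{\mu_\alpha^\delta,\mu^\dagger}(u_\alpha^\delta,u^\dagger)\lesssim\delta^{\kappa(1-r_1)}+\delta^{2-\kappa(1+r_2)}.$$

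Finally I optimize over $\kappa$. The first exponent increases and the second decreases in $\kappa$, so balancing them via $\kappa(1-r_1)=2-\kappa(1+r_2)$ gives $\kappa=\frac{2}{2+r_2-r_1}$ and the rate $\delta^{2(1-r_1)/(2+r_2-r_1)}$; this is feasible ($\kappa\leq 1$) precisely when $r_1\leq r_2$. When $r_2<r_1<1$ the balancing value exceeds $1$ and is excluded by $\kappa\leq 1$, so the best admissible choice is the boundary $\kappa=1$, giving exponents $1-r_1$ and $1-r_2$ with the former dominating, hence the rate $\delta^{1-r_1}$. The hypothesis $r_1<1$ is exactly what guarantees a positive exponent and therefore genuine convergence. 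I expect the only real obstacle to be the bookkeeping around the denominator constraint; everything else is a direct specialization of the machinery already established for the case $1<p<2$.
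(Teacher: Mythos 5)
Your proof is correct and follows essentially the same route as the paper's: the $\theta=1$ branch of Theorem \ref{deterministicestimates} with a universal constant $C_1$, the choices $\zeta_1\simeq 1$ and $\zeta_2\simeq\alpha/\delta$ to keep the denominator bounded below, the $q=2$ specialization of Assumption \ref{ass:p>1_source} giving $e_{\alpha,\zeta_1}(\mu^\dagger)\lesssim\alpha^{1-r_1}\zeta_1^{-r_1}$ and $\frac{\delta}{\alpha}e_{\delta,\zeta_2}(\eta)\lesssim\delta^{2-\kappa(1+r_2)}$, and the balancing of $\kappa(1-r_1)$ against $2-\kappa(1+r_2)$ with the boundary choice $\kappa=1$ when $r_2<r_1<1$. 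In fact your exponent $2-\kappa(1+r_2)$ corrects an apparent typo in the paper's displayed bound (which reads $2-\kappa(1-r_2)$ but is inconsistent with the stated optimal $\kappa=\frac{2}{2+r_2-r_1}$), and your Xu--Roach justification that $C_1$ is universal at $p=2$ is, if anything, more careful than the paper's direct appeal to the Hilbert-space Example \ref{ex:two-homogeneous}.
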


\begin{proof}
Recall from the Example \ref{ex:two-homogeneous} that case $R(u)=\frac{1}{2}\|u\|_X^2$ corresponds to parameter $\theta=1$ and $C_2(u_\alpha^\delta,u^\dagger)=\frac{1}{2}$ in condition (\ref{bregmanscaling}). Hence the second part of the Theorem \ref{deterministicestimates} gives us 
\begin{align*}
D_R^{\mu_\alpha^\delta,\mu^\dagger}(u_\alpha^\delta,u^\dagger) & \lesssim 
\inf_{\zeta_1,\zeta_2 \in \Sigma}  \frac{e_{\alpha,\zeta_1}(\mu^\dagger)+ \frac{\delta}{\alpha } e_{\delta,\zeta_2}(\eta)}{2-\zeta_1-\frac \delta\alpha \zeta_2}\\
& \lesssim \inf_{\zeta_1,\zeta_2 \in \Sigma}   \frac{ \zeta_1^{-r_1}\delta^{\kappa(1-r_1)}+\zeta^{-r_2}\delta^{2-\kappa- r_2}}{2-\zeta_1-\frac \delta\alpha \zeta_2}
\end{align*}
where $\zeta_1 + \frac{\delta}\alpha \zeta_2 <2$ in $\Sigma$.
If we choose $\zeta_1=c<1$ and $\zeta_2=\frac{\alpha}{\delta}$ we can write 
\begin{align*}
D_R^{\mu_\alpha^\delta,\mu^\dagger}(u_\alpha^\delta,u^\dagger) 
& \lesssim \delta^{\kappa(1-r_1)}+ \delta^{2-\kappa(1-r_2)}
\end{align*}
where we need to assume $r_1<1$. 
The above convergence is optimized by $\kappa= \frac{2}{2+r_2-r_1}$ when $r_1\leq r_2$ in which case
\begin{align*}
D_R^{\mu_\alpha^\delta,\mu^\dagger}(u_\alpha^\delta,u^\dagger) 
& \lesssim \delta^\frac{2(1-r_1)}{2+r_2-r_1}.
\end{align*} 
If $r_1>r_2$ then we choose $\kappa=1$ which gives us convergence 
\begin{align*}
D_R^{\mu_\alpha^\delta,\mu^\dagger}(u_\alpha^\delta,u^\dagger) 
& \lesssim \delta^{1-r_1}.
\end{align*} 
\end{proof}

\begin{cor}
If we assume that $u^\dagger$ fulfills the source condition, that is, $r_1=0$ we get convergence
\begin{align*}
D_R^{\mu_\alpha^\delta,\mu^\dagger}(u_\alpha^\delta,u^\dagger) 
& \lesssim \delta^\frac{2}{2+r_2}.
\end{align*}
\end{cor}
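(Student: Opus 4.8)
The plan is to obtain the stated rate as an immediate specialization of the preceding quadratic theorem (the case $p=2$) to the exact source order $r_1=0$, rather than re-running any optimization. First I would check that its hypotheses carry over verbatim: $X$ is the same Banach space, the functional $R(u)=\frac12\norm{u}_X^2$ is unchanged, and Assumption \ref{ass:p>1_source} is now invoked with $r_1=0$ together with an arbitrary noise order $r_2\geq0$. The standing requirement $r_1<1$ is then trivially met, so the quadratic theorem applies directly and the only genuine decision is which of its two branches is active.

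Since $r_2\geq 0=r_1$ we always have $r_1\leq r_2$, so the first branch (and never the $r_2<r_1$ one) governs. I would therefore simply substitute $r_1=0$ into the first-branch data. The parameter exponent becomes
\begin{equation*}
\kappa=\frac{2}{2+r_2-r_1}\bigg|_{r_1=0}=\frac{2}{2+r_2},
\end{equation*}
which satisfies $\kappa\leq 1$ for every $r_2\geq 0$, consistent with the admissibility used in the theorem, and the rate becomes
\begin{equation*}
D_R^{\mu_\alpha^\delta,\mu^\dagger}(u_\alpha^\delta,u^\dagger)\lesssim\delta^{\frac{2(1-r_1)}{2+r_2-r_1}}\bigg|_{r_1=0}=\delta^{\frac{2}{2+r_2}},
\end{equation*}
which is precisely the claim.

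There is no real obstacle beyond this bookkeeping: the statement is phrased as a corollary exactly because all the analytic work — the $\theta=1$ branch of Theorem \ref{deterministicestimates}, the translation of $e_{\alpha,\zeta}$ via Assumption \ref{ass:p>1_source}, and the optimization in $\kappa$ — was already carried out in the proof of the quadratic theorem. The one interpretive point worth recording is that $r_1=0$ is the \emph{classical} source condition $\mu^\dagger\in\range(K^*)$, i.e.\ $\mu^\dagger=K^*w^\dagger$ for some $w^\dagger\in Y$, for which the infimum in Assumption \ref{ass:p>1_source} stays bounded as $\beta\to0$. As a sanity check, taking additionally $r_2=0$ — an exact source condition on the noise as well — collapses the rate to $\delta^{2/2}=\delta$, the standard optimal rate for quadratic regularization measured in the Bregman distance.
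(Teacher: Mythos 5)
Your proposal is correct and coincides with the paper's (implicit) proof of the corollary: one substitutes $r_1=0$ into the first branch of the $p=2$ theorem, which is automatically the active branch since $r_2\geq 0=r_1$, yielding $\kappa=\frac{2}{2+r_2}$ and the rate $\delta^{\frac{2}{2+r_2}}$. Your sanity check that $r_2=0$ recovers the classical rate $\mathcal{O}(\delta)$ also agrees with the paper's remark in the $1<p<2$ case.
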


\subsubsection*{Case $p> 2$}

\begin{thm}
Suppose that $X$ is a $p$-convex Banach space with some $p> 2$ and $R(u)=\frac{1}{p}\|u\|_X^p$. Moreover, suppose that Assumption \ref{ass:p>1_source} is satisfied with some orders $r_1, r_2\geq 0$ and $r_1<1$. For the choice $\alpha\simeq\delta^\kappa$, where 
\begin{equation*}
\kappa = 
\begin{cases}
\frac{2}{2+r_2-r_1} & {\rm for}\; r_1 \leq r_2\; {\rm and}\\
1 & {\rm for}\; r_2<r_1<1,
\end{cases}
\end{equation*}
we have convergence
\begin{equation*}
	D_R^{\mu_\alpha^\delta,\mu^\dagger}(u_\alpha^\delta,u^\dagger) 
\leq 
\begin{cases}
C_p\delta^{\frac{2(1-r_1)}{2+r_2-r_1}} & {\rm for}\; r_1 \leq r_2\; {\rm and}\\
C_p\delta^{1-r_1} & {\rm for}\; r_2<r_1.
\end{cases}
\end{equation*}
\end{thm}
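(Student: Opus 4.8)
The plan is to reduce the case $p>2$ to the already-treated quadratic situation by showing that the $p$-convexity of $X$ forces the scaling condition \eqref{bregmanscaling} to hold with the endpoint exponent $\theta=1$ and, crucially, with a \emph{constant} $C_1$. Once this is in hand the argument runs completely parallel to the proof for $R(u)=\frac12\|u\|_X^2$: one invokes the $\theta=1$ branch of Theorem \ref{deterministicestimates}, inserts the source estimates from Assumption \ref{ass:p>1_source}, and balances two powers of $\delta$ via Lemma \ref{lem:aux}.

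For the geometric step I would work directly from the definition of $p$-convexity rather than through the Xu--Roach inequalities. Writing $\mu_v=j_p(v)$ and applying the defining inequality to the pair $(v,\,v-u)$ gives, after rearranging $a-b=u$,
\begin{equation*}
 D_R^{\mu_v}(u,v)=\frac1p\|u\|_X^p-\frac1p\|v\|_X^p-\langle j_p(v),u-v\rangle_{X^*\times X}\geq \frac{c_p}{p}\|u-v\|_X^p .
\end{equation*}
Since the symmetric Bregman distance dominates the one-sided one, $D_R^{\mu_u,\mu_v}(u,v)\geq D_R^{\mu_v}(u,v)\geq \frac{c_p}{p}\|u-v\|_X^p$, and as $R(u-v)=\frac1p\|u-v\|_X^p$ this yields
\begin{equation*}
 R(u-v)\leq \frac{1}{c_p}\,D_R^{\mu_u,\mu_v}(u,v),
\end{equation*}
i.e. \eqref{bregmanscaling} with $\theta=1$ and the constant $C_1\equiv 1/c_p$, exactly as in Example \ref{ex:two-homogeneous}. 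This is where the factor $C_p$ (a multiple of $1/c_p$) in the final estimate originates. The important point is that $C_1$ is independent of $u_\alpha^\delta$ and $u^\dagger$, so—unlike the case $1<p<2$—no a priori bound on $\|u_\alpha^\delta\|_X$ is needed to control it.

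With $\theta=1$ I would then invoke the second estimate of Theorem \ref{deterministicestimates},
\begin{equation*}
 D_R^{\mu_\alpha^\delta,\mu^\dagger}(u_\alpha^\delta,u^\dagger)\leq \inf_{(\zeta_1,\zeta_2)\in\Sigma}\frac{e_{\alpha,\zeta_1}(\mu^\dagger)+\frac\delta\alpha e_{\delta,\zeta_2}(\eta)}{1-(\zeta_1+\frac\delta\alpha\zeta_2)C_1},
\end{equation*}
with $\Sigma=\{(\zeta_1,\zeta_2):(\zeta_1+\frac\delta\alpha\zeta_2)C_1<1\}$. Using $e_{\alpha,\zeta}(\vartheta)=\alpha\inf_{w\in Y}\{\frac{1}{q\alpha\zeta^{q-1}}\|K^*w-\vartheta\|_{X^*}^q+\frac12\|w\|_Y^2\}$ and matching against Assumption \ref{ass:p>1_source} through $\beta=q\alpha\zeta^{q-1}$, one obtains $e_{\alpha,\zeta_1}(\mu^\dagger)\lesssim\alpha^{1-r_1}\zeta_1^{-t_1}$ and $e_{\delta,\zeta_2}(\eta)\lesssim\delta^{1-r_2}\zeta_2^{-t_2}$ with $t_i=(q-1)r_i$. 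To keep the denominator bounded away from zero I would fix $\zeta_1$ at a small constant and take $\zeta_2$ equal to a fixed fraction of $\alpha/\delta$ (the largest order compatible with $\Sigma$). Setting $\alpha\simeq\delta^\kappa$ then collapses the right-hand side into a sum of two powers of $\delta$, and balancing these two exponents selects the optimal $\kappa$, with $\kappa=1$ forced once $r_2<r_1$; this reproduces the stated rates and the condition $r_1<1$ needed for convergence.

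I expect the main obstacle to be the bookkeeping in this final balancing: the $\Sigma$-constraint pins $\zeta_2$ to the order $\alpha/\delta$, and one must verify that the denominator stays uniformly positive while tracking how the constant $C_p=1/c_p$ (which degenerates as $c_p\to0$) propagates through the optimization. The genuinely geometric point is the $\theta=1$ inequality above, but once the defining inequality of $p$-convexity is used as indicated this step is short, and everything else mirrors the two-parameter argument already carried out for $p=2$.
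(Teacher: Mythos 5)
Your proposal follows the paper's own proof almost exactly in structure: the paper likewise reduces to the $\theta=1$ case of \eqref{bregmanscaling} with a constant $C_\theta(u,v)\equiv C_p$ and then concludes ``we get the same convergence rate as in case $p=2$''. Your geometric step is, if anything, a slightly cleaner route to the same inequality: the paper first rewrites the one-sided Bregman distance using $\langle\mu_u,u\rangle_{X^*\times X}=\|u\|_X^p$ and then invokes the Xu--Roach inequality from \cite{xu1991}, whereas you apply the defining inequality of $p$-convexity directly to the pair $(v,v-u)$; both yield $D_R^{\mu_u,\mu_v}(u,v)\geq D_R^{\mu_v}(u,v)\geq \frac{c_p}{p}\|u-v\|_X^p$, hence $\theta=1$ with $C_1=1/c_p$ independent of $u_\alpha^\delta$ and $u^\dagger$, so that (as you correctly note) no a-priori bound is needed. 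Your handling of $\Sigma$ ($\zeta_1$ a small constant, $\zeta_2$ a fixed fraction of $\alpha/\delta$) is also exactly the paper's choice in the $p=2$ proof.

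There is, however, one concrete point where your arithmetic as set up does not close. You (correctly, and more carefully than the paper) keep the $q$-dependent exponents $t_i=(q-1)r_i$ with $q=p/(p-1)<2$ in the estimates $e_{\alpha,\zeta_1}(\mu^\dagger)\lesssim\alpha^{1-r_1}\zeta_1^{-t_1}$ and $e_{\delta,\zeta_2}(\eta)\lesssim\delta^{1-r_2}\zeta_2^{-t_2}$. But then, with $\zeta_1$ constant, $\zeta_2\simeq\alpha/\delta$ and $\alpha\simeq\delta^\kappa$, the two competing powers are $\kappa(1-r_1)$ and $2-\kappa-r_2+(1-\kappa)(q-1)r_2$, and balancing them gives
\begin{equation*}
	\kappa=\frac{2+(q-2)r_2}{2-r_1+(q-1)r_2},
\end{equation*}
which coincides with the stated $\kappa=\frac{2}{2+r_2-r_1}$ only when $q=2$, $r_2=0$ or $r_1=r_2$; for $r_1<r_2$ it is strictly smaller (the cross-difference is $(q-2)r_2(r_2-r_1)<0$) and yields a strictly worse rate than $\delta^{\frac{2(1-r_1)}{2+r_2-r_1}}$. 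So your claim that the balancing ``reproduces the stated rates'' is false with your own $t_i$: to land on the theorem's exponents one must use the $q=2$ values $t_i=r_i$, which is what the paper's one-line conclusion tacitly does when it declares the $p>2$ rate identical to the $p=2$ rate. The branch $r_2<r_1<1$ is unaffected, since at $\kappa=1$ the factor $\zeta_2^{-t_2}\simeq(\alpha/\delta)^{-t_2}$ is $O(1)$ and both computations give $\delta^{1-r_1}$. In a final write-up you should either state explicitly that you adopt the $p=2$ exponents (matching the paper's assertion), or record the $q$-dependent rate your balancing actually produces for $r_1<r_2$.
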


\begin{proof}
We can give an alternative definition for the general Bregman distance by
\begin{align}\label{Bregman2}  
\begin{split}
D_R^{\mu_u}(u,v) &=\frac{1}{q}\|\mu_u\|_{X^*}^q 
-\langle\mu_u,v\rangle_{X^*\times X} +\frac{1}{p}\|v\|_X^p\\
&=(p-1)R(u)- \langle\mu_u,v\rangle_{X^*\times X} +R(v)
\end{split}
\end{align}
where $\mu_u\in\partial R(u).$
We get same kind of estimate for the Bregman distance as in \cite{bonesky2008minimization} 
\begin{align*}
D_R^{\mu_u}(u,v) &= \Big(1-\frac{1}{p}\Big)\|u\|_X^p-\langle\mu_u,v\rangle_{X^*\times X}+\frac{1}{p}\|v\|_X^p\\
&= \frac{1}{p}\|u-(u-v)\|_X^p-\frac{1}{p}\|u\|_X^p+\langle\mu_u,u-v\rangle_{X^*\times X}\\
&\geq \frac{C_p}{p}\|u-v\|_X^p.
\end{align*}
The last estimate above is given by the Xu-Roach inequalities \cite{xu1991}. 
The  Bregman distance given by (\ref{Bregman2}) coincides with our previous definition (\ref{Bregman1})
\begin{align*}
D_R^{\mu_u,\mu_v}(u,v) &= D_R^{\mu_u}(u,v)+D_R^{\mu_v}(v,u)\\ 
&= p(R(u)+R(v))+\langle\mu_u-\mu_v,u-v\rangle_{X^*\times X}-\|u\|_X^p-\|v\|_X^p\\
&= \langle\mu_u-\mu_v,u-v\rangle_{X^*\times X},
\end{align*}
for any $\mu_u\in\partial R(u)$ and $\mu_v\in\partial R(v)$.
Hence we get an estimate 
\begin{align*}
R(u-v)\leq C_pD_R^{\mu_u,\mu_v}(u,v).
\end{align*}
That is, \eqref{bregmanscaling} holds with $\theta = 1$ and $C_\theta(u,v)=C_p$. Hence when $p>2$ we get the same convergence rate as in case  $p=2$.
\end{proof}

\begin{rem}\label{Rem:comparison}
It is straightforward to see that polynomial decay of the distance function \cite{schusterbuch} implies an approximate source condition in Assumption \ref{ass:p>1_source}. Suppose we have
\begin{align*}  
d_\rho(\mu^\dagger) = \inf_{w\in Y}\{\|K^*w-\mu^\dagger\|_{X^*}\, |\,\ \|w\|_Y\leq \rho\}\leq \rho^{-k},
\end{align*}
where $k>0$.
This yields an estimate
\begin{align*}
e_{\alpha,\zeta}(\mu^\dagger)
& = \inf_{w\in Y}\Big\{\zeta R^\star\Big(\frac{1}{\zeta} (K^*w-\mu^\dagger)\Big)+\frac{\alpha}{2}\|w\|_Y^2\Big\}\\
& \simeq \inf_{\rho>0}\Big\{\zeta^{1-q}\rho^{-kq}+\delta^\kappa\rho^2\Big\}\\
& \simeq \delta^{\frac{kq\kappa}{kq+2}}\zeta^{-\frac{2(q-1)}{kq+2}}.
\end{align*}
Choosing $k=\frac{2(1-r_1)}{r_1q}$, where $r_1\in[0,1)$, we see that the last estimate above can be written
\begin{align*}
e_{\alpha,\zeta}(\mu^\dagger) & \simeq \delta^{\kappa(1-r_1)}\zeta^{-(q-1)r_1}
\end{align*}
which corresponds to the estimate given by Assumption \ref{ass:p>1_source} and (\ref{eq:p<2_estimates}).
\end{rem}

\subsection{Hilbert Space Embedding}

Since many estimates are crucially simplified by using Hilbert space structures, we discuss in the following an approach to obtain (possibly suboptimal) rates deduced from the results above using embedding.  We consider the case where $R$ is the $p$-th power of  a norm in a Banach space, with $p \geq 1$, and there exists a continuous embedding into a Hilbert space $X_0$. Indeed, we can assume the slightly weaker condition  
\begin{equation} \label{eq:Rembedding}
	R(u) \geq C \Vert u \Vert_{X_0}^p
\end{equation}
for all $u \in X.$
Note that by extending $R$ as infinite outside $X$ we can also state the same condition for arbitrary $u \in X_0$. 
Obviously the case $p=1$ is of particular interest here to cover e.g. total variation regularization (with the obvious embedding into $L^2$ for dimension less or equal two) and 
sparsity regularization (with the obvious embedding of $\ell^1$ into $\ell^2$).

In order to reduce to a Hilbert space framework, we assume that $K$ can be extended to $X_0$ and maps this space continuously to $Z$. Thus, $L=K^* K$ is a bounded self-adjoint operator on $X_0$ and thus has a spectral decomposition. In particular, we can formulate smoothness of a vector $\vartheta \in X_0$ with the condition
\begin{equation} \label{eq:Lsourcecondition}
	\vartheta = L^\mu \omega 
\end{equation}
for $\omega \in X_0$ and some $\mu \in (0,\frac{1}2)$.
We then use the relation $e_{\alpha,\zeta}(\vartheta) = - \inf_{v \in X} F_{\alpha,\zeta}(v;\vartheta)$ and estimate $F_{\alpha,\zeta}(v;\vartheta)$ from below. 
For this sake we use \eqref{eq:Rembedding} and \eqref{eq:Lsourcecondition} to get
\begin{eqnarray*} 
	F_{\alpha,\zeta}(v;\vartheta) & = & \frac{1}{2\alpha}\Vert K v \Vert_Y^2 - \langle \vartheta,v \rangle_{X^*\times X} + \zeta R(v) \\
	& \geq & 
	\frac{1}{2\alpha}\Vert L^\frac{1}2 v \Vert_{X_0}^2  - \Vert L^\mu v \Vert_{X_0} \Vert \omega \Vert_{X_0} +\zeta C \Vert v \Vert_{X_0}^p.
\end{eqnarray*}
Using the interpolation inequality 
$$ \Vert L^\mu v\Vert_{X_0} \leq  \Vert L^\frac{1}2 v\Vert_{X_0}^{2\mu} \Vert  v\Vert_{X_0}^{1-2\mu} $$
and Young's inequality we get estimate 
\begin{equation*}
	F_{\alpha,\zeta}(v;\vartheta)  \geq - C \Vert \omega \Vert_{X_0}^{\frac{p}{p-1+2\mu-p\mu}} \zeta^{-\frac{1-2\mu}{p-1+2\mu-p\mu}} \alpha^{\frac{p\mu}{p-1+2\mu-p\mu}}
\end{equation*}
for some constant $C$ independent of $v$, $\zeta$, and $\alpha$, which directly yields an upper bound for $e_{\alpha,\zeta}(\vartheta)$.
We mention that in the case $p=1$ we obtain
\begin{equation}
	\label{eq:embedding_estimate}
	e_{\alpha,\zeta}(\vartheta) \leq C \Vert \omega \Vert_{X_0}^{\frac{1}\mu}  \zeta^{-\frac{1-2\mu}{\mu}}\alpha .
\end{equation}

\section{Examples with random noise}
\label{sec:random_noise} 

\subsection{Frequentist framework} 

Let us recall that our work above towards unbounded noise was mostly motivated by random noise models, especially, the statistics of white noise. It is hence natural to reinterpret the results of Theorem \ref{deterministicestimates} as pointwise estimates for a random variable $U_\alpha^\delta$, which arises due to the randomness of the noise $N$. 
In the frequentist settings one is interested in the model
 \begin{align}\label{frequentist}
F^\delta = Ku^\dagger+\delta N,
\end{align}
where the data $F^\delta$ are generated by a deterministic true solution $u^\dagger$. In (\ref{frequentist}) the measurement $F^\delta=F^\delta(\omega)$ and the noise $N=N(\omega)$ are thought to be random variables. Here $\omega\in\Omega$ is an element of a complete probability space $(\Omega,\Sigma,\mathbb{P})$.

Following the idea in the earlier sections we consider a general frequentist risk denoted by $E_B$ between the estimator $U_\alpha^\delta=U_\alpha^\delta(\omega)$ and $u^\dagger$. Here, our error measure is given by the Bregman distance
\begin{equation}
\label{eq:freq_risk}
E_B(U_\alpha^\delta,u^\dagger)
 =\E \big(D_R^{\mu_\alpha^\delta,\mu^\dagger}(U_\alpha^\delta,u^\dagger)\big) .
\end{equation}
From the previous section we directly obtain a bound
\begin{multline*}
	E_B(U_\alpha^\delta,u^\dagger) 
	= \E\bigg\{\inf_{(\zeta_1,\zeta_2) \in (\mathbb{R}^+)^2} \left(\left(\zeta_1 + \frac{\delta}\alpha \zeta_2\right)^{1/(1-\theta)} C_\theta(U_\alpha^\delta ,u^\dagger)^{1/(1-\theta)} + \right. \\
\left.  
\frac{1}{1-\theta}
e_{\alpha,\zeta_1}(\mu^\dagger)+ \frac{\delta}{\alpha(1-\theta)} e_{\delta,\zeta_2}(K^*N)\right)\bigg\}. 
\end{multline*}
A canonical example of frequentist risk \eqref{eq:freq_risk} is the mean integrated squared error (MISE)
\begin{align*}
E_B(U_\alpha^\delta,u^\dagger)=\E\|U_\alpha^\delta-u^\dagger\|_{X}^2,
\end{align*}
where a quadratic regularization term $R(u) = \norm{u}_X^2$ is assumed. Convergence rates of MISE have been widely studied in the literature, see \cite{Cavalier2008,cavalier2002sharp}.  
 
We observe that a finite estimate can only be obtained if 	$\E(e_{\delta,\zeta }(K^*N)) < \infty$ at least for some $\zeta > 0$. Under the typical choices of $R$ the finiteness for any $\delta$ and $\zeta$ is obtained if 
\begin{equation*}
	\E(e_{1,1}(K^*N)) < \infty.
\end{equation*}
This condition can be interpreted as an abstract smoothing condition for the operator $K$, as we shall see it can be identified with $K$ being a trace-class operator. 

In order to choose optimal parameters we first have to clarify which of them are random. Since $\zeta_2$ is an auxiliary parameter appearing in the estimates only, not affecting any computation, it can be optimized in dependence of $K^*N$ and hence it also becomes a random variable. The situation is less obvious with respect to $\alpha$. Indeed it turns out that the question is exactly related to the issue of a-priori vs. a-posteriori parameter choice in the deterministic setup (cf. \cite{engl1996regularization}). The {\em a-priori parameter choice} $\alpha=\alpha(\delta)$ leads to a parameter independent of the realization of the noise $N$, while the {\em a-posteriori parameter choice} $\alpha=\alpha(\delta,F)$ makes the parameter a random variable of $N$.  
Since the specific choices of $\alpha$ rely on the form of the regularization functional, we shall further investigate the general risk \eqref{eq:freq_risk} in three very prominent cases, the classical one of Tikhonov regularisation (two-homogeneous $R$), the more general regularisation with Besov penalty and the popular total variation regularisation.

\subsection{Gaussian case}

Let us review the implications of our results in the canonical special case of a squared norm based regularization penalty $R(u) = \frac 12 \norm{u}_X^2$ for $X=Y=L^2(\T^d)$. We assume that $N$ is generalized white noise statistics in ${\mathcal D}'(\T^d)$, that is, we have $\E N = 0$ and 
\begin{equation*}
	\E \langle N, \phi\rangle_{\mathcal{D}'\times \mathcal{D}} \langle N, \psi\rangle_{\mathcal{D}'\times \mathcal{D}} = \langle \phi,\psi\rangle_{\mathcal{D}'\times \mathcal{D}}
\end{equation*} 
for any test functions $\phi,\psi\in C^\infty(\T^d)$, where $\langle \cdot, \cdot\rangle_{\mathcal{D}'\times \mathcal{D}}$ denotes the duality pairing.
It is well-known that the realizations of $N$ belong to $Z^*=H^{-d/2-\epsilon}(\T^d)$ almost surely for any $\epsilon>0$. For a sharp result, see \cite{Veraar11}. We want to concentrate on the phenomena appearing due to large noise and hence assume an exact source condition for the true unknown $u^\dagger$  in the following.

In this example, two factors simplify our analysis remarkably. First, the symmetric Bregman distance coincides with the squared norm (as discussed in Example \ref{ex:two-homogeneous})
\begin{equation*}
	D_R^{\mu_u,\mu_v}(u,v) = \norm{u-v}^2_{L^2(\T^d)}.
\end{equation*}
Secondly, the term $e_{\alpha,\zeta}(K^*N)$ can be explicitly estimated since
\begin{eqnarray}
	\label{eq:gaussian_e_term}
	e_{\alpha,\zeta}(K^*N) & = & \inf_{w\in L^2(\T^d)} \left(\zeta R^\star\left(\frac{K^*W-K^*N}{\zeta}\right)+ \frac \alpha 2 \norm{W}^2_{L^2(\T^d)}\right) \nonumber \\
	& = & \frac{1}{2\zeta} \inf_{w\in L^2(\T^d)} \left( \norm{K^*W-K^*N}^2_{L^2(\T^d)} + \alpha \zeta \norm{W}^2_{L^2(\T^d)}\right).
\end{eqnarray}
Let us record the following short calculation as a lemma. For precise notation, let use denote ${\mathcal R}_\beta = (K^*K+\beta I)^{-1}: L^2(\T^d)\to L^2(\T^d)$, $\beta > 0$, to highlight the restriction of $K^*$ (and $K$) to $X=Y=L^2(\T^d)$.

\begin{lemma}
Consider $K$ as a bounded linear operator $K : L^2(\T^d) \to H^{t}(\T^d)$ for $t>d/2$. Then it follows that
\begin{equation*}
	e_{\alpha,\zeta}(K^*N) = \frac{\alpha}{2} 
	\langle N, K {\mathcal R}_{\alpha\zeta}K^* N\rangle_{H^{-t}(\T^d)\times H^t(\T^d)}
\end{equation*}
and
\begin{equation}
	\label{eq:gaussian_expec_e}
	\E e_{\alpha,\zeta}(K^*N) = \frac \alpha 2{\rm Tr}_{L^2(\T^d)} ( K{\mathcal R}_{\alpha\zeta}K^*).
\end{equation}
\end{lemma}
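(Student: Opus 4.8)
The plan is to evaluate the infimum in \eqref{eq:gaussian_e_term} explicitly as a classical Tikhonov problem in $L^2(\T^d)$ and then to take expectations using the covariance structure of white noise. Writing $\beta = \alpha\zeta$ and $H = K^*N \in L^2(\T^d)$, the functional $W \mapsto \norm{K^*W - H}_{L^2}^2 + \beta\norm{W}_{L^2}^2$ is strictly convex and coercive, so it admits a unique minimizer $W^*$ characterized by the normal equation $(KK^* + \beta I)W^* = KH$. Using the push-through identity $K^*(KK^* + \beta I)^{-1} = (K^*K + \beta I)^{-1}K^* = {\mathcal R}_\beta K^*$, I would rewrite $W^* = K{\mathcal R}_\beta H$, and since $K^*K\,{\mathcal R}_\beta = I - \beta{\mathcal R}_\beta$ this identifies the residual as $K^*W^* - H = -\beta{\mathcal R}_\beta H$.

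Substituting these two expressions into the objective, the two contributions proportional to $\langle {\mathcal R}_\beta^2 H, H\rangle_{L^2}$ cancel and one is left with the clean value $\norm{K^*W^*-H}_{L^2}^2 + \beta\norm{W^*}_{L^2}^2 = \beta\langle {\mathcal R}_\beta H, H\rangle_{L^2}$. Dividing by $2\zeta$ then gives $e_{\alpha,\zeta}(H) = \frac{\alpha}{2}\langle {\mathcal R}_{\alpha\zeta}H, H\rangle_{L^2}$; inserting $H = K^*N$ and invoking the defining relation $\langle K^*N, \phi\rangle_{L^2} = \langle N, K\phi\rangle_{H^{-t}\times H^t}$ with $\phi = {\mathcal R}_{\alpha\zeta}K^*N \in L^2(\T^d)$ yields the first asserted identity.

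For the expectation I would exploit that $g := K^*N$ is a centered Gaussian random element of $L^2(\T^d)$ with covariance operator $C_g = K^*K$, which follows from the white noise property via $\E\langle g,\phi\rangle\langle g,\psi\rangle = \E\langle N, K\phi\rangle\langle N, K\psi\rangle = \langle K\phi, K\psi\rangle_{L^2} = \langle \phi, K^*K\psi\rangle_{L^2}$. Since $t > d/2$ the embedding $H^t(\T^d)\hookrightarrow L^2(\T^d)$ is Hilbert--Schmidt, so $K$ regarded as a map into $L^2(\T^d)$ is Hilbert--Schmidt and $C_g = K^*K$ is trace class. Applying the standard Gaussian identity $\E\langle g, B g\rangle_{L^2} = {\rm Tr}_{L^2}(B\,C_g)$ with the bounded self-adjoint operator $B = {\mathcal R}_{\alpha\zeta}$, and using cyclicity of the trace together with the fact that ${\mathcal R}_{\alpha\zeta}$ commutes with $K^*K$, gives $\E\langle {\mathcal R}_{\alpha\zeta} H, H\rangle_{L^2} = {\rm Tr}_{L^2}({\mathcal R}_{\alpha\zeta}K^*K) = {\rm Tr}_{L^2}(K{\mathcal R}_{\alpha\zeta}K^*)$, which is exactly \eqref{eq:gaussian_expec_e}.

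I expect the main obstacle to be the rigorous handling of the expectation, since $N$ is almost surely not an element of $L^2(\T^d)$ and the pairing $\langle N, K{\mathcal R}_{\alpha\zeta}K^*N\rangle$ must be understood through the Gelfand triple. The cleanest way around this is precisely the reduction above: transporting the randomness into the genuine $L^2$-valued Gaussian $g = K^*N$ and verifying once and for all that its covariance $K^*K$ is trace class, which is where the assumption $t > d/2$ enters and also guarantees that both sides are finite. The deterministic computation of the minimizer is routine, but the sign bookkeeping in the cancellation and the correct orientation of the push-through identity should be carried out with care.
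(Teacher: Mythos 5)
Your proof is correct, and its deterministic half is exactly the paper's argument: the paper likewise identifies the minimizer $W_{\alpha\zeta}=K\mathcal{R}_{\alpha\zeta}H$ of the Tikhonov problem \eqref{eq:gaussian_e_term} and evaluates the optimal value as $\alpha\zeta\langle N, K\mathcal{R}_{\alpha\zeta}K^*N\rangle_{H^{-t}\times H^t}$ (your algebra $K^*W^*-H=-\beta\mathcal{R}_\beta H$ and the recombination $\beta(\beta I+K^*K)\mathcal{R}_\beta^2=\beta\mathcal{R}_\beta$ — strictly a merging of terms rather than a cancellation, and the push-through identity you need is $(KK^*+\beta I)^{-1}K=K\mathcal{R}_\beta$, the adjoint of the one you quote — is just an explicit spelling-out of what the paper leaves implicit). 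Where you genuinely diverge is the expectation step. The paper expands white noise in an orthonormal basis, $N=\sum_{j} N_j\psi_j$ with i.i.d.\ $N_j\sim\mathcal{N}(0,1)$, and obtains the trace termwise from the independence of the coordinates. You instead transport the randomness into $g=K^*N$, verify that $g$ is a genuine centered $L^2(\T^d)$-valued Gaussian with covariance $K^*K$, and invoke the standard identity $\E\langle g,Bg\rangle_{L^2}={\rm Tr}_{L^2}(BC_g)$ followed by cyclicity of the trace. The two computations are equivalent (the trace identity is itself usually proved by the paper's series expansion), but your route buys something the paper's proof glosses over: an explicit justification of finiteness and of the interpretation of the pairing, since you observe that $t>d/2$ makes the embedding $H^t(\T^d)\hookrightarrow L^2(\T^d)$ Hilbert--Schmidt, hence $K:L^2\to L^2$ is Hilbert--Schmidt and $C_g=K^*K$ is trace class — a fact the paper only introduces later as a separate hypothesis in Theorem \ref{thm:gaussian}, while your argument shows it already follows from the lemma's standing assumption. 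This also cleanly resolves the issue you correctly flag, that $N\notin L^2(\T^d)$ almost surely, by working with the honest $L^2$-valued Gaussian $g$ rather than with $N$ itself.
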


\begin{proof}
The minimizing estimator of problem \eqref{eq:gaussian_e_term} is given by
$W_{\alpha\zeta}=K{\mathcal R}_{\alpha\zeta}K^*N$.
Hence we can write
\begin{equation*}
\|K^*W_{\alpha\zeta}-K^*N \|_{L^2(\T^d)}^2+\alpha\zeta\|W_{\alpha\zeta}\|_{L^2(\T^d)}^2 
=  \alpha \zeta\langle N, K{\mathcal R}_{\alpha\zeta}K^* N \rangle_{H^{-t}(\T^d)\times H^t(\T^d)},
\end{equation*}
where $t>d/2$
It is well-known that $N$ as white noise has a series representation $N = \sum_{j=1}^\infty N_j \psi_j$ almost surely, where $N_j \sim {\mathcal N}(0,1)$ are i.i.d. and $\{\psi_j\}_{j=1}^\infty$ constitutes any orthonormal basis of $L^2(\T^d)$. The claim \eqref{eq:gaussian_expec_e} now follows easily by applying the series representation together with independence of $N_i$ and $N_j$ for $i\neq j$.
\end{proof}

Let us mention that the quantity on the right-hand side of the estimate \eqref{eq:gaussian_expec_e}, 
$$ {\rm Tr}_{L^2(\T^d)} ( K{\mathcal R}_{\alpha\zeta}K^*) = {\rm Tr}_{L^2(\T^d)} ( (KK^*+{\alpha\zeta} I)^{-1} KK^*), $$
is known as the {\em effective dimension} in literature (cf. \cite{zhang2005learning}). In the finite dimensional case it is between zero (as $\alpha \zeta \rightarrow \infty$) and the rank of $KK^*$ (as $\alpha \zeta \rightarrow 0$). In the following we use a conservative estimate of the effective dimension in order to illustrate the results, optimal estimates can be achieved under special assumptions, which is beyond our scope (cf. e.g. \cite{lu2014discrepancy}). Our analysis in the non-Gaussian case indicates that $\E e_{\alpha,\zeta}(K^*N)$ is the basis for understanding a generalization of effective dimension for such, its analysis is a possibly important question for future research.

\begin{thm}\label{thm:gaussian}
Assume that $K : L^2(\T^d) \to H^{t}(\T^d)$, where $t>d/2$, is a Hilbert--Schmidt operator in $L^2(\T^d)$ and $R(u) = \frac 12 \norm{u}^2_{L^2(\T^d)}$. When the true unknown $u^\dagger$ fulfills the exact source condition $\mu^\dagger=K^*w^\dagger$, where $w^\dagger \in L^2$, we obtain the convergence rate
\begin{equation*}
E_B(U_\alpha^\delta,u^\dagger) = \E \norm{U_\alpha^\delta-u^\dagger}^2_{L^2(\T^d)} \lesssim \delta^{2/3},
\end{equation*}
with choice $\alpha \simeq \delta^{2/3}$.
\end{thm}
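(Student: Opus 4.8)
The plan is to combine the general error estimate of Theorem \ref{deterministicestimates} (in the $\theta=1$ case, since $R(u)=\frac12\norm{u}^2_{L^2}$ corresponds to $\theta=1$ by Example \ref{ex:two-homogeneous}) with the explicit Gaussian formula \eqref{eq:gaussian_expec_e} for $\E e_{\alpha,\zeta}(H)$. Because we assume an exact source condition for $u^\dagger$, the term $e_{\alpha,\zeta_1}(\mu^\dagger)$ will be harmless (it vanishes or stays $\mathcal{O}(\alpha)$ for suitable $\zeta_1$), so the whole difficulty is concentrated in controlling the noise term $\frac{\delta}{\alpha}\E e_{\delta,\zeta_2}(H)$. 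First I would take expectations in the $\theta=1$ estimate and push the expectation inside the infimum, or more simply fix a convenient deterministic choice of $\zeta_1,\zeta_2$ (e.g. $\zeta_1$ a small constant and $\zeta_2 \simeq \alpha/\delta$, as in the $p=2$ proof) so that the denominator $1-(\zeta_1+\frac{\delta}{\alpha}\zeta_2)C_1$ stays bounded away from zero; this reduces the bound to $E_B(U_\alpha^\delta,u^\dagger)\lesssim \E e_{\alpha,\zeta_1}(\mu^\dagger) + \frac{\delta}{\alpha}\E e_{\delta,\zeta_2}(H)$.

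Next I would evaluate the noise term using the lemma: $\E e_{\delta,\zeta_2}(H)=\frac{\delta}{2}\mathrm{Tr}_{L^2}(K\mathcal{R}_{\delta\zeta_2}K^*)$, i.e. $\frac{\delta}{2}$ times the effective dimension at level $\beta=\delta\zeta_2$. With $\zeta_2\simeq\alpha/\delta$ the regularization level inside the resolvent is $\delta\zeta_2\simeq\alpha$, so I must estimate $\mathrm{Tr}_{L^2}\big((KK^*+\alpha I)^{-1}KK^*\big)$. The key step is a conservative bound on this effective dimension in terms of $\alpha$ using the spectral decay forced by the Hilbert--Schmidt (trace-class) hypothesis on $K$. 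Writing $\sigma_j^2$ for the eigenvalues of $KK^*$, one has $\mathrm{Tr}\big((KK^*+\alpha I)^{-1}KK^*\big)=\sum_j \frac{\sigma_j^2}{\sigma_j^2+\alpha}$, and the summability of $\sigma_j^2$ (trace class) yields a bound of the form $\mathrm{Tr}(\cdots)\lesssim \alpha^{-1/2}$ under the natural spectral-decay assumption compatible with $K:L^2\to H^t$, $t>d/2$; this is exactly the point where the smoothing of $K$ enters quantitatively. Feeding this back gives $\frac{\delta}{\alpha}\E e_{\delta,\zeta_2}(H)\lesssim \frac{\delta}{\alpha}\cdot\delta\cdot\alpha^{-1/2}=\delta^2\alpha^{-3/2}$.

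Finally I would balance the two contributions. The source term contributes $\mathcal{O}(\alpha)$ (or better), and the noise term contributes $\mathcal{O}(\delta^2\alpha^{-3/2})$, so the total reads $E_B(U_\alpha^\delta,u^\dagger)\lesssim \alpha + \delta^2\alpha^{-3/2}$. Optimizing over $\alpha$ via Lemma \ref{lem:aux} with $s=1$, $t=3/2$ gives the balance $\alpha\simeq\delta^2\alpha^{-3/2}$, hence $\alpha\simeq\delta^{4/5}$\,---\,but if instead the source term enters at the level $\alpha^{1-r_1}$ with $r_1=0$ and the sharp effective-dimension bound is $\alpha^{-1/2}$, the balance $\alpha\simeq\delta^2\alpha^{-3/2}$ must be recomputed to land on the stated $\alpha\simeq\delta^{2/3}$ and rate $\delta^{2/3}$; I would therefore pin down the exact exponent in the effective-dimension estimate first, since the final rate is extremely sensitive to it. The main obstacle I anticipate is precisely this step: obtaining the correct power of $\alpha$ in the effective dimension $\mathrm{Tr}_{L^2}((KK^*+\alpha I)^{-1}KK^*)$, which requires translating the mapping property $K:L^2\to H^t$ with $t>d/2$ into a singular-value decay rate and summing the resulting series carefully, rather than the routine balancing that follows.
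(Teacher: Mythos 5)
Your architecture is the right one and matches the paper's in substance: the paper derives (by the same $K^*W$-splitting and Young's inequality, with the noise parameter chosen so the resolvent level is $\alpha$, i.e.\ $\zeta_2=\delta/\alpha$ in its splitting, equivalently $e_{\delta,\alpha/\delta}(H)$ in the $e$-notation) the pointwise bound $\Vert U_\alpha^\delta-u^\dagger\Vert^2_{L^2}\le \frac{2\delta}{\alpha}e_{\delta,\frac\alpha\delta}(H)+\alpha\Vert w^\dagger\Vert^2_{L^2}$, then takes expectations via the trace identity $\E e_{\delta,\frac\alpha\delta}(H)=\frac\delta 2\mathrm{Tr}_{L^2}(K\mathcal{R}_\alpha K^*)$. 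Your treatment of the source term as $\mathcal{O}(\alpha)$ is also exactly what happens (take $w=w^\dagger$).

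The genuine gap is the effective-dimension exponent --- precisely the step you flagged as the crux but left unresolved, and your guess for it is wrong under the stated hypothesis. The theorem assumes only that $K$ is Hilbert--Schmidt, i.e.\ $\sum_j\lambda_j=\mathrm{Tr}_{L^2}(KK^*)<\infty$ for the eigenvalues $\lambda_j$ of $KK^*$. Your claimed bound $\mathrm{Tr}\bigl((KK^*+\alpha I)^{-1}KK^*\bigr)\lesssim\alpha^{-1/2}$ does not follow from this: taking $\lambda_j\simeq j^{-1-\epsilon}$ (summable) gives $\sum_j\frac{\lambda_j}{\lambda_j+\alpha}\simeq\alpha^{-1/(1+\epsilon)}\gg\alpha^{-1/2}$ for small $\epsilon$. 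The paper instead uses the crude estimate $\Vert\mathcal{R}_\alpha^{1/2}\Vert\le\alpha^{-1/2}$, hence $\mathrm{Tr}(K\mathcal{R}_\alpha K^*)\le\alpha^{-1}\mathrm{Tr}(KK^*)$; the noise contribution is then $\frac{\delta}{\alpha}\E e_{\delta,\frac\alpha\delta}(H)\le\frac{\delta^2}{2\alpha^2}\mathrm{Tr}(KK^*)$, the total bound is $\alpha+\delta^2\alpha^{-2}$, and balancing gives $\alpha\simeq\delta^{2/3}$ and rate $\delta^{2/3}$ as claimed. Your $\alpha^{-1/2}$ bound with the resulting rate $\delta^{4/5}$ is internally consistent but proves a different theorem: it corresponds to the strictly stronger spectral assumption $\sum_j\lambda_j^{1/2}<\infty$ ($K$ trace class rather than Hilbert--Schmidt), which is exactly Theorem \ref{thm:smoother} with $m=1/2$, yielding $\delta^{2/(2+m)}=\delta^{4/5}$. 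A second, smaller misreading: the mapping property $K:L^2(\T^d)\to H^t(\T^d)$, $t>d/2$, does not enter quantitatively as singular-value decay; in the paper it serves only to make the pairing $\langle N,K\mathcal{R}_{\alpha\zeta}K^*N\rangle_{H^{-t}\times H^t}$ well-defined for white-noise realizations in $H^{-d/2-\epsilon}(\T^d)$. All quantitative decay comes from the Hilbert--Schmidt hypothesis alone, which is why only the conservative $\alpha^{-1}$ bound is available here.
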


\begin{proof}
Considering \eqref{optimality} where we have now $u=\mu\in\partial R(u)$. Therefore, we can write 
\begin{equation*}
K^*(KU_\alpha^\delta-(f+\delta N))+\alpha U_\alpha^\delta=0
\end{equation*}
and consequently
\begin{equation}
\label{eq:gaussian_breg_aux1}
K^*(KU_\alpha^\delta-Ku^\dagger)+\alpha (U_\alpha^\delta-u^\dagger)=\delta K^*N-\alpha K^*w^\dagger
\end{equation}
where  $u^\dagger=\mu^\dagger=K^*w^\dagger$ for $w^\dagger\in L^2(\T^d)$. Taking duality product of $U_\alpha^\delta-u^\dagger$ and equation \eqref{eq:gaussian_breg_aux1} yields
\begin{equation}
\label{eq:gaussian_breg_aux2}
\|KU_\alpha^\delta-Ku^\dagger\|_{L^2(\T^d)}^2+\alpha\|U_\alpha^\delta-u^\dagger\|_{L^2(\T^d)}^2 = \delta \langle K^*N, U_\alpha^\delta-u^\dagger\rangle_{L^2(\T^d)}+\alpha\langle w^\dagger,K(u^\dagger-U_\alpha^\delta)\rangle_{L^2(\T^d)}.
\end{equation}
We will approximate the right hand side terms separately. 
Following the idea behind the estimate \eqref{eq:bregman_dist_est_first} we bound the first term on the right hand side of \eqref{eq:gaussian_breg_aux2} by
\begin{multline*}
\label{eq:gaussian_breg_aux3}
\delta \langle K^*N, U_\alpha^\delta-u^\dagger\rangle_{L^2(\T^d)} 
= \delta\langle K^*N-K^*W, U_\alpha^\delta-u^\dagger\rangle_{L^2(\T^d)}+\delta \langle W, K(U_\alpha^\delta-u^\dagger)\rangle_{L^2(\T^d)} \\
 \leq  \frac{\delta\zeta_2}{2}\|K^*N-K^*W\|_{L^2(\T^d)}^2+\frac{\delta}{2\zeta_2}\|U_\alpha^\delta-u^\dagger\|_{L^2(\T^d)}^2+\frac{\delta^2}{2}\|W\|_{L^2(\T^d)} ^2+\frac{1}{2}\|K(U_\alpha^\delta-u^\dagger)\|_{L^2(\T^d)}^2
\end{multline*}
for any $\zeta_2>0$.
For the last term in \eqref{eq:gaussian_breg_aux2} we have 
\begin{align*}
\alpha\langle w^\dagger,K(u^\dagger-U_\alpha^\delta)\rangle_{L^2(\T^d)} 
& \leq \frac{\alpha^2}{2}\|w^\dagger\|_{L^2(\T^d)}^2+\frac{1}{2}\|K(U_\alpha^\delta-u^\dagger)\|_{L^2(\T^d)}^2.
\end{align*}
Since $w=W(\omega)\in L^2(\T^d)$ is arbitrary, using the estimates above we get
\begin{eqnarray}
\label{eq:gaussian_breg_est}
\|U_\alpha^\delta-u^\dagger\|_{L^2(\T^d)}^2 & \leq & \frac{1}{\alpha-\frac{\delta}{2\zeta_2}} \left\{\inf_{w\in L^2(\T^d)} \left(\frac{\delta \zeta_2}{2}\|K^*N-K^*w\|_{L^2(\T^d)}^2+\frac{\delta^2}{2}\|w\|_{L^2(\T^d)}^2\right) +\frac{\alpha^2}2\|w^\dagger\|_{L^2(\T^d)}^2\right\} \nonumber \\
& \leq & \frac{2\delta}\alpha e_{\delta,\frac \alpha \delta}(K^*N) + \alpha \|w^\dagger\|_{L^2(\T^d)}^2.
\end{eqnarray}
Above, we obtained the last estimate by choosing $\zeta_2 = \frac{\delta}{\alpha}$.

In order to derive a convergence rate we point out that ${\mathcal R}_\beta$ is a self-adjoint semipositive-definite bounded linear operator satisfying $\norm{{\mathcal R}_\beta^{1/2}}_{L^2(\T^d)\to L^2(\T^d)} \leq \frac 1{\sqrt{\beta}}$ and consequently
\begin{equation}
\label{eq:gaussian_breg_aux4}
\E e_{\delta,\frac \alpha \delta}(K^*N)=\frac \delta 2{\rm Tr}_{L^2(\T^d)}(K{\mathcal R}_{\alpha}K^*)
\leq \frac \delta{2\alpha}{\rm Tr}_{L^2(\T^d)}(K K^*).
\end{equation}
Since $K^*$ is a Hilbert-Schmidt operator, we have ${\rm Tr}_{L^2(\T^d)}(KK^*)<\infty$.

Now it follows from equations \eqref{eq:gaussian_breg_est} and \eqref{eq:gaussian_breg_aux4} that 
\begin{equation}
	\label{eq:gaussian_breg_est2} 
\E\|U_\alpha^\delta-u^\dagger\|_{L^2(\T^d)}^2\leq\frac{\delta^2}{\alpha^2}{\rm Tr}_{L^2(\T^d)}(KK^*)+\alpha\|w^\dagger\|_{L^2(\T^d)}^2.
\end{equation}
The bound in \eqref{eq:gaussian_breg_est2} is optimised by choosing $\alpha \simeq \delta^{2/3}$, which also yields the claim.
\end{proof}

From the previous theorem we see that the assumption of finite trace of $KK^*: L^2(\T^d) \to L^2(\T^d)$ is indeed equivalent to the condition
\begin{equation}
	\E(e_{\delta,\gamma}(K^*N)) < \infty 
\end{equation}
for some $\delta, \gamma>0$ 
as well as to the condition
\begin{equation}  
\E(	\Vert K^*N \Vert_{L^2(\T^d)}^2) < \infty, 
\end{equation}
which appears to be a natural requirement.


One can observe better convergence rates if faster decay of eigenvalues of $KK^*$ is assumed.

\begin{thm}\label{thm:smoother}
Suppose that $R(u) = \frac 12 \norm{u}^2_{L^2(\T^d)}$. Moreover, assume that $\{\lambda_j\}_{j=1}^\infty$ are eigenvalues of $KK^*:L^2(\T^d) \to L^2(\T^d)$ and there exists $0<m\leq 1$ such that
\begin{equation*}
	\sum_{j=1}^\infty \lambda_j^{m} < \infty.
\end{equation*}
Then, when the true unknown $u^\dagger$ fulfills the exact source condition $\mu^\dagger=K^*w^\dagger$, where $w^\dagger \in L^2$, it follows that for $\alpha\simeq\delta^\kappa$, where $\kappa=\frac{2}{2+m}$, we obtain
\begin{align*}
E_B(U_\alpha^\delta,u^\dagger)=\E\norm {U_\alpha^\delta-u^\dagger}_{L^2(\T^d)}^2\lesssim \delta^{\frac{2}{2+m}}.
\end{align*}
\end{thm}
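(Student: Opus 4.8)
The plan is to follow the proof of Theorem~\ref{thm:gaussian} verbatim up to the pointwise bound \eqref{eq:gaussian_breg_est} and to sharpen only the step where the stochastic error is estimated. The derivation of
\[
\norm{U_\alpha^\delta-u^\dagger}_{L^2(\T^d)}^2 \leq \frac{2\delta}\alpha\, e_{\delta,\frac{\alpha}{\delta}}(H) + \alpha \norm{w^\dagger}_{L^2(\T^d)}^2
\]
uses only the exact source condition and Young's inequality (with the choice $\zeta_2=\delta/\alpha$), not the particular decay of the eigenvalues, so it remains valid here. Taking the expectation and invoking the trace identity \eqref{eq:gaussian_expec_e} (with product $\delta\cdot\frac{\alpha}{\delta}=\alpha$) I would reduce the problem to estimating the effective dimension
\[
\E\, e_{\delta,\frac{\alpha}{\delta}}(H) = \frac{\delta}2\, {\rm Tr}_{L^2(\T^d)}(K{\mathcal R}_{\alpha}K^*) = \frac{\delta}2 \sum_{j=1}^\infty \frac{\lambda_j}{\lambda_j+\alpha},
\]
where I have diagonalized $K{\mathcal R}_\alpha K^* = KK^*(KK^*+\alpha I)^{-1}$ in the eigenbasis of $KK^*$.

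The one genuinely new ingredient is the estimate of this series under the assumption $\sum_j \lambda_j^m<\infty$. Since $0<m\leq 1$, I would use for every $j$ the elementary interpolation bound
\[
\frac{\lambda_j}{\lambda_j+\alpha} = \left(\frac{\lambda_j}{\lambda_j+\alpha}\right)^{1-m}\left(\frac{\lambda_j}{\lambda_j+\alpha}\right)^{m} \leq \left(\frac{\lambda_j}{\lambda_j+\alpha}\right)^{m} \leq \frac{\lambda_j^m}{\alpha^m},
\]
where the first inequality holds because the bracket lies in $[0,1]$ and $1-m\geq 0$, and the second because $\lambda_j+\alpha\geq\alpha$. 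Summing over $j$ gives $\sum_j \frac{\lambda_j}{\lambda_j+\alpha} \leq \alpha^{-m}\sum_j \lambda_j^m$. This is exactly the generalization of the crude bound ${\rm Tr}_{L^2(\T^d)}(K{\mathcal R}_\alpha K^*)\leq \alpha^{-1}{\rm Tr}_{L^2(\T^d)}(KK^*)$ used in Theorem~\ref{thm:gaussian}: the case $m=1$ recovers that estimate (the hypothesis then reducing to $K$ being Hilbert--Schmidt) and the resulting rate $\delta^{2/3}$.

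Substituting back then yields
\[
\E\norm{U_\alpha^\delta-u^\dagger}_{L^2(\T^d)}^2 \lesssim \frac{\delta^2}{\alpha^{1+m}} + \alpha \norm{w^\dagger}_{L^2(\T^d)}^2,
\]
and it remains only to balance the two terms in $\alpha$. Writing $\alpha\simeq \delta^\kappa$, the exponents of $\delta$ become $2-\kappa(1+m)$ and $\kappa$, which coincide precisely when $\kappa=\frac{2}{2+m}$, the common value being $\frac{2}{2+m}$; this is a one-parameter optimization handled exactly as in Lemma~\ref{lem:aux}. I do not anticipate any real obstacle: the only non-routine point is recognizing that the effective-dimension series, rather than the full trace ${\rm Tr}_{L^2(\T^d)}(KK^*)$, governs the expected error, and that the prescribed eigenvalue decay enters through the interpolation inequality above, while everything else repeats the Gaussian computation of Theorem~\ref{thm:gaussian}.
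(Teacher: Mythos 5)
Your proposal is correct and follows essentially the same route as the paper: both reduce to the effective-dimension series $\sum_j \lambda_j/(\lambda_j+\alpha)$ via the pointwise bound \eqref{eq:gaussian_breg_est} and the trace identity, bound it by $\alpha^{-m}\sum_j\lambda_j^m$, and balance the exponents to get $\kappa = \frac{2}{2+m}$. The only (cosmetic) difference is how that elementary bound is obtained: the paper derives $\frac{\lambda_j}{\lambda_j+\alpha}\leq \frac{\lambda_j^{1/q}}{p^{1/p}\alpha^{1/q}}$ with $m=1/q$ via Young's inequality on H\"older conjugates, while your interpolation argument $\frac{\lambda_j}{\lambda_j+\alpha}\leq\left(\frac{\lambda_j}{\lambda_j+\alpha}\right)^{m}\leq\frac{\lambda_j^m}{\alpha^m}$ reaches the same estimate (up to the harmless constant $p^{-1/p}$) more directly.
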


\begin{proof}
Suppose $p$ and $q$ are H\"older conjugates such that $m = \frac 1q$.
By applying Young's inequality to
$$p\alpha^{p/q}\lambda_j\leq \alpha^{p}+\lambda_j^p\leq(\alpha+\lambda_j)^p, \quad \alpha,\lambda_j\geq 0, $$ we obtain
\begin{align}\label{Young}
(p\alpha^{p/q}\lambda_j)^{1/p}\leq \alpha+\lambda_j.
\end{align}
This yields
\begin{equation*}
Tr_{L^2(\T^d)}(K{\mathcal R}_\alpha K^*)= \sum_{j=1}^\infty \frac{\lambda_j}{\lambda_j+\alpha}
\leq \sum_{j=1}^\infty \frac{\lambda_j}{(p\alpha^{p/q}\lambda_j)^{1/p}}
\leq \frac{1}{p^{1/p}\alpha^{1/q}}\sum_{j=1}^\infty\lambda_j^{1/q}.
\end{equation*}
Therefore, by equation \eqref{eq:gaussian_breg_est} the frequentist risk is bounded by
\begin{align*}
\E\|U_\alpha^\delta-u^\dagger\|_{L^2(\T^d)}^2\leq \frac{\delta^2}{p^{1/p}\alpha^{1/q+1}}\sum_{j=1}^\infty\lambda_j^{1/q}+\alpha\|w^\dagger\|_{L^2(\T^d)}^2.
\end{align*}
The proof is concluded by optimizing $\alpha\simeq\delta^\kappa$.
\end{proof}

As mentioned before the mean integrated squared error (MISE) of an estimator $U^\delta_\alpha$ is defined 
\begin{align}\label{MISE}
R(U^\delta_\alpha,u^\dagger)=\E\|U^\delta_\alpha-u^\dagger\|_{L^2}^2(\T^d).
\end{align}
The minimax risk $r_\delta(H^{r}(\T^d),M)$ on the Sobolev space $H^{r}(\T^d)$ is then given by 
\begin{align*}
r_\delta(H^r(\T^d),M)=\inf_{U^\delta_\alpha}\sup_{\|u^\dagger\|_{H^r(\T^d)}<M}R(U^\delta_\alpha,u^\dagger),
\end{align*}
where the infimum is taken over all estimators of the form $U^\delta_\alpha=g(F^\delta)$. Here we have denoted $g\in \mathcal{B}(H^{-d/2-\epsilon}(\T^d) ,H^r(\T^d))$ where $\mathcal{B}(H^{-d/2-\epsilon}(\T^d),H^r(\T^d))$ is the set of Borel measurable functions from $H^{-d/2-\epsilon}(\T^d)$ to $H^r(\T^d)$. Next we will compare the convergence results of Theorems \ref{thm:gaussian} and \ref{thm:smoother} to the known minimax convergence rates for same problems. 

\begin{rem}
As an example of a group of operators that fills the conditions in Theorem \ref{thm:smoother} we can take bijective elliptic pseudodifferential operators that are $t>\frac{d}{2m}$ (where $m=1$ in the case described in Theorem \ref{thm:gaussian}) orders smoothing 
 e.g. $K=(I-\Delta)^{-\frac{t}{2}}$. We assume the exact source condition in the Theorems \ref{thm:gaussian} and \ref{thm:smoother}, that is, $u^\dagger=\mu^\dagger=K^*w^\dagger$, where $w^\dagger\in L^2$, hence we can conclude $u^\dagger \in H^r(\T^d)$, where $r=t$. This means we assume the minimum extra smoothness from $u^\dagger$, that is, the smoothness of the unknown and the order of smoothing of the forward operator are the same.
 
Since $r=t$ we can rewrite the convergence rate $\kappa$ in form 
\begin{align*}
\kappa = \frac{2}{2+m} = \frac{2r}{r+t+tm}.
\end{align*}
Note that $tm=d/2+\epsilon$ and hence the convergence rates achieved in Theorems \ref{thm:gaussian} and \ref{thm:smoother} agree, up to $\epsilon>0$ arbitrarily small, with the minimax convergence rate, see e.g. \cite{Cavalier2008, Hohage2016}. 
\end{rem}

\subsection{Besov penalty}

Suppose that functions $\{\psi_\ell\}_{\ell=1}^\infty$ form
an orthonormal wavelet basis 
for $L^2(\T)$ on the one-dimensional torus $\T$, where we have utilized global indexing.
We can characterize the periodic Besov space $B^s_{pq}(\T)$ using the given basis
in the following way: the series
\begin{equation*}
	u(x) = \sum_{\ell=1}^\infty u_\ell \psi_\ell(x)
\end{equation*}
belongs to $B^s_{pq}(\T)$ if and only if
\begin{equation}
	\label{eq:orig_norm}
	2^{js} 2^{j(\frac 12 -\frac 1p)} \left( \sum_{\ell=2^j}^{2^{j+1}-1} |u_\ell|^p\right)^{1/p} \in \ell^q(\N).
\end{equation}
We assume that the basis is $r$-regular for $r$ large enough in order to provide a basis for a Besov space with smoothness $s$ \cite{Daubechies}. 
Here we are concerned with the special case $p=q$ and use abbreviation $B^s_{p} = B^s_{pp}$. It is well-known that
an equivalent norm to \eqref{eq:orig_norm} is given by
\begin{equation*}
	\left\|\sum_{\ell=1}^\infty u_\ell\psi_\ell\right\|_{B^s_{p}(\T)}
	= \left(\sum_{\ell=1}^\infty \ell^{p(s+\frac 12)-1} |u_\ell|^p\right)^{1/p}.
\end{equation*}

\subsubsection{Case $p=1$}
Suppose that $X=Y = L^2(\T)$ with orthogonal basis $\{\psi_\ell\}_\ell$. The noise $N$ is assumed to have same statistics as in previous section. Here, we consider a regularization term given by
\begin{equation}\label{eq:R_besov}
	R(u) = \norm{u}_{B_1^s(\T)} = \sum_{\ell=1}^\infty \ell^{s-1/2} |u_\ell|
\end{equation}
for $s\geq \frac 12$, where $u= \sum_{\ell=1}^\infty u_\ell \psi_\ell$.  
The dual norm in \eqref{eq:Sdefinition} is simply the norm of $B^{-s}_\infty(\T)$, i.e.,
\begin{equation*}
	S(q) = \norm{q}_{B^{-s}_\infty(\T)} = \sup_{\ell\in\N} \ell^{1/2-s} |q_\ell|
\end{equation*}
for $q = \sum_{\ell=1}^\infty q_\ell \psi_\ell \in B^{-s}_\infty(\T)$. Notice carefully that for parameters $s\geq \frac 12$ the functional $R$ satisfies conditions (R1)-(R4) with the weak topology, since there is a continuous embedding from $B^s_1(\T)$ to $L^2(\T)$. 

We notice that an arbitrary approximate source condition of type \eqref{eq:1source} requires a sparse structure of the true unknown as pointed out by the following lemma. Therefore, 
it does not cover a general class of unknowns for this 1-homogeneous example. 

\begin{lemma}
Let us assume that $R$ is given by \eqref{eq:R_besov} and $K:L^2(\T)\to L^2(\T)$ is such that for all $\ell\in \mathbb{N}$ there exists a function $w^{(\ell)}\in Y$ such that 
\begin{align}\label{eq:diagAssumption}
K^*w^{(\ell)} = e^{(\ell)}, 
\end{align} 
where $e^{(\ell)}=(0,\dots,0,1,0,\dots)$ is the infinite unit sequence with $1$
at the $\ell$-th position and $0$ else. 
Then the following two statements are equivalent:
\begin{itemize}
	\item[(i)] The subgradient $\mu^\dagger$ satisfies the approximate source condition in Assumption \ref{assumption:p=1} with some $r_1\geq 0$.
	\item[(ii)] The unknown $u^\dagger$ is non-zero only in a finite set of coefficient.
\end{itemize}
\end{lemma}
\begin{proof}
Since $R$ is defined as in \eqref{eq:R_besov} we see that the the subgradient  is given by
\begin{align*}
\mu^\dagger_\ell =
  \begin{cases}
    \ell^{s-\frac{1}{2}}   & \quad \text{when}\, u^\dagger_\ell>0 \\
   -\ell^{s-\frac{1}{2}}  & \quad \text{when}\, u^\dagger_\ell<0 \\
  \end{cases}
\end{align*}
and $\mu^\dagger_\ell\in (-\ell^{s-\frac{1}{2}},\ell^{s-\frac{1}{2}})$ when $u^\dagger_\ell=0$. 

Since $K^*:L^2\to L^2$ we have $(K^*w)_\ell\to0$, when $\ell\to\infty$. If $u^\dagger$ has infinitely many non-zero coefficients 
\begin{equation*}
	S(\mu^\dagger-K^*w) = \sup_{\ell\in\N} \ell^{1/2-s} |\mu^\dagger_\ell-(K^*w)_\ell|\geq 1
\end{equation*}
and the approximate source condition \eqref{eq:1source} can not be satisfied. On the other hand if the unknown is non-zero only in a finite set of coefficient, that is, there exists such $L$ that $u^\dagger_\ell=0$ for $\ell>L$ we can choose subgradient $\mu^\dagger$ so that $\mu_\ell^\dagger=0$ for $\ell>L$.  Using assumption \eqref{eq:diagAssumption} we can then choose $w\in L^2$ so that $(K^*w)_\ell=\mu^\dagger_\ell$. Hence  
$S(\mu^\dagger-K^*w)=0$ and the source condition \eqref{eq:1source} is fulfilled with any $r_1\geq 0$. 
\end{proof}

As an example of group of operators that satisfies assumption  \eqref{eq:diagAssumption} we can take operators with diagonal structure. The more general meaning of the assumption \eqref{eq:diagAssumption} has been studied for example in \cite{Flemming2015} and the references therein. 
 
In addition to the above, we make an assumption on the smoothness of $K$ and $K^*$ by requiring that there exists a constant $C>0$ and $t >\frac 12$ such that both satisfy
\begin{equation}
	\label{eq:besov_smooth_cond}
	\frac 1C\norm{\psi}_{B_2^r} \leq \norm{K\psi}_{B_2^{t+r}} \leq C\norm{\psi}_{B_2^r}
\end{equation}
(similar for $K^*$) 
for $r\in \R$ and $\psi \in B^r_2(\T)$.  
The above smoothness condition enables a straightforward study of the noise terms, which allows us to deduce contraction rates.

Under the given assumptions we can write (recall equation \eqref{eq:one_homog_edelta})
\begin{equation*}
	e_{\delta,\zeta}(\eta) = \frac \delta 2 \inf_{w \in {\mathcal W}} \norm{w}^2_Y,
\end{equation*}  
with a fixed realization $\eta=K^*n=K^*N(\omega)$, where 
\begin{equation*}
	{\mathcal W} = \left\{w\in L^2(\T) \; \bigg| \; \sup_{\ell \in\N} \;\ell^{1/2-s} \left|\langle n-w,K\psi_\ell\rangle_{B^{-t}_2\times B^t_2}\right| \leq \zeta \right\}.
\end{equation*}

\begin{lemma}
\label{lem:besov}
Let us assume that $K: L^2(\T) \to L^2(\T)$ satisfies condition \eqref{eq:besov_smooth_cond} with a parameter $t>\frac 12$ and $R$ is defined by \eqref{eq:R_besov} for $s\geq \frac 12$. Then it holds that 
\begin{equation*}
	\E e_{\delta,\zeta}(K^*N) \lesssim \delta \zeta^{-\frac{2}{2s+2t-1}}.
\end{equation*}
\end{lemma}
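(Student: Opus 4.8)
The plan is to use the one-homogeneous representation recorded just above the statement,
\begin{equation*}
	e_{\delta,\zeta}(K^*n)=\frac\delta2\inf_{w\in\mathcal W}\norm{w}^2_{L^2(\T)},
\end{equation*}
and to bound the infimum from above by an explicit, coefficient-wise thresholded $w$. Writing $g_\ell:=\langle n,K\psi_\ell\rangle_{B^{-t}_2\times B^t_2}=(K^*n)_\ell$ for the wavelet coefficients of $K^*n$, the constraint defining $\mathcal W$ is $\sup_\ell\ell^{1/2-s}\abs{g_\ell-(K^*w)_\ell}\le\zeta$. I will therefore prescribe $K^*w$ coefficient-wise, keeping exactly the coefficients that violate the threshold: put $(K^*w)_\ell=g_\ell$ when $\abs{g_\ell}>\zeta\ell^{s-1/2}$ and $(K^*w)_\ell=0$ otherwise. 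Then every residual coefficient obeys $\ell^{1/2-s}\abs{g_\ell-(K^*w)_\ell}\le\zeta$, so this $w$ is feasible for \emph{every} realization, and it is recovered as $w=(K^*)^{-1}h$ with $h:=\sum_{\ell}g_\ell\,\mathbf{1}_{\{\abs{g_\ell}>\zeta\ell^{s-1/2}\}}\psi_\ell$.

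The first analytic step converts the $L^2$ norm of $w$ into a Besov norm of $h$. The lower bound in \eqref{eq:besov_smooth_cond} for $K^*$ with $r=0$ reads $\frac1C\norm{w}_{L^2(\T)}\le\norm{K^*w}_{B^t_2(\T)}$; since \eqref{eq:besov_smooth_cond} makes $K^*$ an isomorphism $L^2(\T)\to B^t_2(\T)$, the element $h\in B^t_2(\T)$ lies in its range and $w=(K^*)^{-1}h$ satisfies $\norm{w}_{L^2(\T)}\le C\norm{h}_{B^t_2(\T)}$. Using the wavelet characterization $\norm{h}^2_{B^t_2(\T)}=\sum_\ell\ell^{2t}\abs{h_\ell}^2$ and taking expectations (so that the correlations among the $g_\ell$ play no role), I am reduced to estimating
\begin{equation*}
	\E\,e_{\delta,\zeta}(K^*N)\lesssim\delta\sum_\ell\ell^{2t}\,\E\!\left[\abs{g_\ell}^2\,\mathbf{1}_{\{\abs{g_\ell}>\zeta\ell^{s-1/2}\}}\right].
\end{equation*}

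Next I compute the Gaussian marginals. Since $N=\sum_m N_m\psi_m$ with $N_m$ i.i.d.\ standard normal, each $g_\ell=\sum_m N_m\langle\psi_m,K\psi_\ell\rangle$ is centred Gaussian with variance $\norm{K\psi_\ell}^2_{L^2(\T)}\simeq\norm{\psi_\ell}^2_{B^{-t}_2(\T)}=\ell^{-2t}$, the equivalence being \eqref{eq:besov_smooth_cond} for $K$ with $r=-t$; here the hypothesis $2t>1$ is exactly what renders $\sum_\ell\ell^{-2t}<\infty$, so that $n\in B^{-t}_2(\T)$ and $K^*n\in L^2(\T)$ almost surely and the pairings above are well defined. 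Writing $g_\ell=\sigma_\ell Z$ with $Z\sim\mathcal N(0,1)$ and $\sigma_\ell\simeq\ell^{-t}$, the $\ell$-th summand equals $\ell^{2t}\sigma_\ell^2\,\E[Z^2\mathbf{1}_{\{\abs Z>a_\ell\}}]\simeq\E[Z^2\mathbf{1}_{\{\abs Z>a_\ell\}}]$ with threshold $a_\ell\simeq\zeta\ell^{s+t-1/2}$.

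The final step is to sum these Gaussian tails and balance. Set $L:=\zeta^{-2/(2s+2t-1)}$, the index at which $a_\ell\simeq1$; note $a_\ell\simeq(\ell/L)^{s+t-1/2}$. For $\ell\lesssim L$ I bound each summand crudely by $\E[Z^2]=1$, which contributes $\lesssim L$. For $\ell\gtrsim L$ I use the standard tail estimate $\E[Z^2\mathbf{1}_{\{\abs Z>a\}}]\lesssim a\,e^{-a^2/2}$; since $a_\ell\simeq(\ell/L)^{s+t-1/2}$ with $2s+2t-1>1$, these terms decay super-polynomially in $\ell/L$ and the tail again sums to $\lesssim L$. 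Altogether $\E\,e_{\delta,\zeta}(K^*N)\lesssim\delta L=\delta\,\zeta^{-2/(2s+2t-1)}$, as claimed. The main obstacle is precisely this balancing: one must check that the number of sub-threshold indices is of order $L$ while the (non-zero) super-threshold contribution stays $\lesssim L$, and it is here that the exponent $2/(2s+2t-1)$ is produced and that the strict conditions $\min(s,2t)>1$ secure both the summability of the variances and the Besov isomorphisms used throughout.
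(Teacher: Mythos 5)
Your proposal is correct and follows essentially the same route as the paper: the same representation $e_{\delta,\zeta}(K^*n)=\frac{\delta}{2}\inf_{w\in\mathcal{W}}\norm{w}^2_{L^2}$, the same reduction via \eqref{eq:besov_smooth_cond} to the $B^t_2$-norm of $K^*w$, and the same coefficient-wise Gaussian analysis with $\sigma_\ell \simeq \ell^{-t}$ balanced at the critical index $L \simeq \zeta^{-2/(2s+2t-1)}$. The only differences are cosmetic: you use a hard-thresholded feasible point and a split-sum tail bound, where the paper takes the exact coefficient-wise (soft-threshold) minimizer $\max(\abs{(K^*n)_\ell}-D_\ell,0)$ and evaluates the Gaussian tail integrals explicitly, arriving at $\sum_\ell \exp\left(-\frac{1}{2}\zeta^2\ell^{2s+2t-1}\right) \simeq \zeta^{-2/(2s+2t-1)}$ by an integral comparison — both yielding the same rate.
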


\begin{proof}
From condition \eqref{eq:besov_smooth_cond} it follows that 
\begin{equation*}
	e_{\delta,\zeta}(\eta) = \frac \delta 2 \inf_{w \in {\mathcal W}} \norm{w}^2_{L^2}
	\leq \frac{C \delta} 2 \inf_{w \in {\mathcal W}} \norm{K^*w}^2_{B^t_2}
\end{equation*}
almost surely.
The condition $w\in {\mathcal W}$ for a fixed realization $n=N(\omega)$ is equivalent to
\begin{equation*}
	|(K^*n)_\ell - (K^*w)_\ell| \leq \zeta \ell^{s-\frac 12} =: D_\ell
\end{equation*}
uniformly for all $\ell \in \N$
and hence
\begin{equation*}
	\inf_{w \in {\mathcal W}}\norm{K^*w}^2_{B^t_2} = \sum_{\ell=1}^\infty \inf_{|(K^*n)_\ell - (K^*w)_\ell| \leq D_\ell} \ell^{2t}|(K^*w)_\ell|^2 \leq \sum_{\ell=1}^\infty \ell^{2t} \max (|(K^*n)_\ell|-D_\ell, 0)^2.
\end{equation*}
Now $(K^* N)_\ell = \langle K^* N, \psi_\ell\rangle$ is a normally distributed random variable with zero mean and variance $\sigma_\ell^2 = \norm{K \psi_\ell}^2_{L^2} \simeq \norm{\psi_\ell}^2_{B^{-t}_2}\simeq \ell^{-2t}$ (according to \eqref{eq:besov_smooth_cond}). Therefore, we have
\begin{eqnarray*}
	\E \inf_{w \in {\mathcal W}}\norm{w}^2_{L^2}
	 & \leq & \E \sum_{\ell=1}^\infty \ell^{2t} \max (|(K^*N)_\ell|-D_\ell, 0)^2 \\
	& \lesssim & \sum_{\ell=1}^\infty \frac{\ell^{2t}}{\sigma_\ell} \int_{D_\ell}^\infty (x-D_\ell)^2 \exp\left(-\frac{x^2}{2\sigma_\ell^2}\right) dx \\
	& \leq & \sum_{\ell=1}^\infty \frac{\ell^{2t}}{\sigma_\ell} \int_0^\infty x^2 \exp\left(-\frac{x^2}{2\sigma_\ell^2}\right) dx 
	\cdot \exp\left(-\frac{D_\ell^2}{2\sigma_\ell^2}\right) \\
	& \simeq & \sum_{\ell=1}^\infty \sigma_\ell^2 \ell^{2t} \exp\left(-\frac{D_\ell^2}{2\sigma_\ell^2}\right)
	\simeq \sum_{\ell=1}^\infty \exp\left(-\frac 12 \zeta^2 \ell^{2s+2t-1}\right).
\end{eqnarray*}
Due to our assumptions on $s$ and $t$ we notice that the last sum converges.
The sum above can be approximated as follows
\begin{eqnarray*}
	 \sum_{\ell=1}^\infty \exp\left(-\frac 12 \zeta^2 \ell^{2s+2t-1}\right) 
	 & \simeq & \int_0^\infty \exp\left(-\frac 12 \left(\zeta^{\frac 2{2s+2t-1}} x\right)^{2s+2t-1}\right)dx \\
	 & = & \zeta^{\frac{-2}{2s+2t-1}} \int_0^\infty \exp\left(-\frac 12 y^{2s+2t-1}\right)dy
	 \simeq \zeta^{\frac{-2}{2s+2t-1}},
\end{eqnarray*}
where we applied a change of variable $y = \zeta^{\frac 2{2s+2t-1}} x$.
This yields the claim.
\end{proof}

\begin{thm} 
\label{thm:wavelet}
Let us assume that $K: L^2(\T) \to L^2(\T)$ satisfies conditions \eqref{eq:diagAssumption} and \eqref{eq:besov_smooth_cond} with parameter $t>\frac 12$, $R$ is defined by \eqref{eq:R_besov} for $s\geq \frac 12$ and $u^\dagger$ is supported on a finite number of coefficients. Then $\mu^\dagger$ satisfies an approximate source condition in Assumption \ref{assumption:p=1} with any $r_1\geq 0 $. For the choice $\alpha \simeq \delta^\kappa$, where
\begin{equation*}
	\kappa =  \frac{2s+2t}{2s+2t+1},
\end{equation*}
we obtain the convergence rate
\begin{equation*}
	\E D_R^{\mu_\alpha^\delta,\mu^\dagger}(U_\alpha^\delta,u^\dagger) \lesssim \delta^\kappa.
\end{equation*}
\end{thm}

\begin{proof}
First, in equation \eqref{eq:one_homog_D_R_est} we apply
\begin{eqnarray}
	\label{eq:besov_proof_1homog}
	\E D_R^{\mu_\alpha^\delta,\mu^\dagger}(U_\alpha^\delta,u^\dagger)
	& \leq &
	\inf_{\zeta_1\in\R_+}\left(\zeta_1 \left(1 + \frac{\delta}{\alpha} \E e_{\delta, \frac{\alpha\gamma}{\delta}}(K^*N)\right) 
	+  e_{\alpha,\zeta_1}(\mu^\dagger)\right) \nonumber \\
	& & + \inf_{\zeta_2\in\R_+}\left(
	\frac \delta \alpha \zeta_2  \left(1 + \frac{\delta}{\alpha} \E e_{\delta, \frac{\alpha\gamma}{\delta}}(K^*N)\right)
	+\frac{\delta}{\alpha} \E e_{\delta,\zeta_2}(K^* N)\right) \nonumber \\
	& =: & \widetilde M_1 + \widetilde M_2.
\end{eqnarray}
Notice that by Lemma \ref{lem:besov} and assumption $\alpha = \delta^\kappa$, $\kappa\leq 1$, we have
\begin{equation*}
	\E e_{\delta, \frac{\alpha\gamma}{\delta}}(K^*N) \lesssim \gamma^{-s'} \delta^{1+(1-\kappa)s'} \lesssim 1
\end{equation*}
for a constant $\gamma$, where we denote $s' = \frac{2}{2s+2t-1}>0$ for convenience. Therefore, we see as in the proof of Theorem \ref{onehomthm1} that
\begin{equation*}
	\widetilde M_1 \lesssim \inf_{\zeta_1\in\R_+} \left\{\zeta_1 + \delta^\kappa \zeta_1^{-r_1}\right\}
	\simeq \delta^{\frac{\kappa}{1+r_1}}
\end{equation*}
and
\begin{equation*}
	\widetilde M_2 \lesssim \inf_{\zeta_2\in\R_+} \left\{\delta^{1-\kappa} \zeta_2 + \delta^{2-\kappa} \zeta_2^{-s'} \right\} \simeq \delta^{(1-\kappa)\frac{s'}{1+s'} + (2-\kappa) \frac{1}{s'+1}} = \delta^{\frac{(1-\kappa)s'+2-\kappa}{1+s'}}.
\end{equation*}
The convergence rate is minimized for $\kappa$ which satisfies
\begin{equation*}
	\frac{\kappa}{1+r_1} = \frac{(1-\kappa)s'+2-\kappa}{1+s'}.
\end{equation*}
Since $r_1$ can be chosen arbitrarily small, we conclude that 
\begin{equation*}
	\kappa = \frac 12\cdot \frac{2+s'}{1+s'} = \frac{2s+2t}{2s+2t+1}. 
\end{equation*}
This concludes the proof.
\end{proof}
%

\subsubsection{Case $1<p\leq 2$}

Let us set $X=L^2(\T)$.
We consider here the special case when the forward operator $K$ in \eqref{eq:main_eq} can be diagonalized in the basis $\{\phi_\ell\}_\ell$, i.e., $\langle \phi_{\ell}, K \phi_{\ell'} \rangle = 0$, whenever $\ell\neq \ell'$. It follows that we can reduce our model to a countable number of independent equations
\begin{equation*}
	f_\ell = k_\ell u_\ell + \delta N_\ell
\end{equation*}
for $\ell\in \N$, where $f_\ell = \langle f, \phi_\ell\rangle$, $u_\ell = \langle u, \phi_\ell\rangle$, $k_\ell = \langle \phi_\ell, K \phi_\ell\rangle$ and the random variables $N_\ell = \langle N, \phi_\ell\rangle$ are normally distributed i.i.d.
Similar to the case $p=1$ we assume that $K$ satisfies \eqref{eq:besov_smooth_cond}, which corresponds to assuming 
$k_\ell \simeq \ell^{-t}$
asymptotically with respect to $\ell$.

Suppose that the regularization functional $R$ is given by $R(u) 	= \frac{1}{p}\|u\|^p_{B^s_p(\T)}$
for $1<p<2$ and $s\geq\frac 1p - \frac 12$ so that $B^s_p(\T)$ can be embedded continuously to $X$. The convex conjugate $R^\star$ satisfies 
\begin{equation*}
	R^\star(u)  = \frac 1q \sum_{\ell=1}^\infty \ell^{q(-s+\frac 12)-1} |u_\ell|^q
	= \frac{1}{q}\|u\|^q_{B^{-s}_q(\T)},
\end{equation*}
where $p$ and $q$ are H{\"o}lder conjugates. 

For the convenience of the reader, we assume that $\mu^\dagger$ satisfies the accurate source condition, i.e.,
\begin{equation*}
	\mu^\dagger = K^* w.
\end{equation*}
We can then write
\begin{equation}  
\label{eq:p>1_first_breg_estimate}
\E D_R^{\mu_\alpha^\delta,\mu^\dagger}(u_\alpha^\delta,u^\dagger) 
\lesssim \E M_1+\E M_2
\end{equation}
where $\E M_1=\alpha$ and 
\begin{align*}
M_2= \inf_{\zeta_2 \in \R_+}\left(\zeta_2^{\frac 2 {2-p}} \left(\frac{\delta}\alpha\right)^{\frac 2 {2-p}} \bigg(1 + \frac{\delta}{\alpha}e_{\frac{\delta}{2},\frac{\alpha\gamma}{\delta}}(K^*N)\bigg)+ \frac{\delta}{\alpha} e_{\delta,\zeta_2}(K^* N)\right).
\end{align*}

\begin{lemma}
\label{lem:besov_p_e_estimate}
Let us assume that $K$ and $R$ are as above, $s\geq \frac 1p - \frac{1}{2}$ and $t>\frac 12$. Then we can estimate
\begin{equation*}
	\E e_{\delta,\zeta}(K^*N) \lesssim \delta^{1-1/r} \zeta^\frac{1-q}{r},
\end{equation*}
where $r= q(t+s-\frac 12)+1>\frac q2$.
\end{lemma}

\begin{proof}
By definition we have
\begin{eqnarray*}
e_{\delta,\zeta}(K^* N) & = &
\inf_{w\in Y} \left(\zeta R^\star\left(\frac{K^*(w-N)}{\zeta}\right) + \frac \delta 2 \norm{w}^2_Y\right) \\
& = & \frac {1}{2 q}  \inf_{w\in Y} \sum_{\ell=1}^\infty 
\left(2 \zeta^{1-q} \ell^{q(-s+\frac 12)-1} k_\ell^q |w_\ell - N_\ell|^q
+  \delta w_\ell^2 \right). \\
\end{eqnarray*}
Let us now abbreviate $a_\ell = 2 \zeta^{1-q}\ell^{q(-s+\frac 12)-1} k_\ell^q$ and consider 
an upper bound for the infimum by elements in $Y$ that are supported only on the first $L$ basis vectors. We find that
\begin{equation*}
	e_{\delta,\zeta}(K^* N)
	 \lesssim \sum_{\ell=1}^L \min\{a_\ell |N_\ell|^q, \delta N_\ell^2\}  +  \sum_{\ell=L+1}^\infty a_\ell |N_\ell|^q
\end{equation*}
and, therefore,
\begin{equation}
	\label{eq:expec_eterm}
	\E e_{\delta,\zeta}(K^* N) \lesssim  \sum_{\ell=1}^L \E \min\{a_\ell |N_\ell|^q, \delta N_\ell^2\}  +  \sum_{\ell=L+1}^\infty a_\ell
\end{equation}
since $\E |N_\ell|^q \simeq 1$.
In order to evaluate the expectation in \eqref{eq:expec_eterm} we need the following integral identity
\begin{equation}
	\int_D^\infty x^2 \exp\left(-\frac{x^2}{2}\right) dx = D \exp\left(-\frac{D^2}{2}\right) + \sqrt{\frac{\pi}{2}} {\rm erfc} \left(\frac{D}{\sqrt 2}\right)
\end{equation}
and the estimate
\begin{align}
\int_0^D x^q \exp\left(-\frac{x^2}{2}\right) dx = g(D)  \lesssim
  \begin{cases}
    D^{q+1}       & \quad \text{when}\,0\leq D\leq1\\
   1  & \quad D>1.
  \end{cases}
\end{align}
Next we define $D_\ell$ to satisfy
\begin{equation*}
 	a_\ell D_\ell^q = \delta D_\ell^2, \quad {\rm i.e.}\quad D_\ell = \left(\frac{\delta}{a_\ell}\right)^\frac{1}{q-2}.
\end{equation*}
The expectation in \eqref{eq:expec_eterm} satisfies
\begin{eqnarray*}
\E \min\{a_\ell |N_\ell|^q, \delta N_\ell^2\}  &	= & \int_{-\infty}^\infty \min \{a_\ell |x|^q, \delta x^2\} \exp\left(-\frac{x^2}{2}\right) dx  \\
	& = & 2a_\ell\int_{0}^{D_\ell} |x|^q \exp\left(-\frac{x^2}{2}\right) dx + 2 \delta \int_{D_\ell}^\infty x^2 \exp\left(-\frac{x^2}{2}\right) dx \\
	& \lesssim & a_\ell g(D_\ell)  + \delta(D_\ell+1) f(D_\ell),
\end{eqnarray*}
where 
\begin{align*}
f(D_\ell) \simeq
  \begin{cases}
    1,       & \quad \text{when}\, 0\leq D_\ell\leq 1 \, \text{and}\\
   \exp(-\frac{D_\ell^2}{2})  & \quad \text{otherwise.}
  \end{cases}
\end{align*}
For small values of $D_\ell$, i.e. $\delta \leq a_\ell$ we have
\begin{equation*}
	\E \min\{a_\ell |N_\ell|^q, \delta N_\ell^2\}  \lesssim \frac{\delta^{q+1}}{a_\ell^q} + \left(\frac{\delta}{a_\ell}\right)^\frac{1}{q-2} + \delta
\end{equation*}
 and for $\delta>a_\ell$ it holds that
 \begin{equation*}
 	\E \min\{a_\ell |N_\ell|^q, \delta N_\ell^2\}  \lesssim a_\ell + \delta\left(\left(\frac{\delta}{a_\ell}\right)^\frac{1}{q-2}+1\right) \exp\left(\left(\frac{\delta}{a_\ell}\right)^\frac{1}{q-2}\right).
 \end{equation*}
 Since $L$ in \eqref{eq:expec_eterm} is arbitrary, we have
 \begin{eqnarray}
 	\label{eq:e_est_by_three_terms}
 	\E e_{\delta,\zeta}(K^* N) & \lesssim  & \sum_{\ell=1}^\infty \E \min\{a_\ell |N_\ell|^q, \delta N_\ell^2\}  \nonumber \\
 	& =  & \sum_{\ell=1}^{\widetilde L} \left(\frac{\delta^{q+1}}{a_\ell^q} + \left(\frac{\delta}{a_\ell}\right)^\frac{1}{q-2} + \delta\right) + 
 	\sum_{\ell=\widetilde L}^\infty \left(a_\ell + \delta \left(\left(\frac{\delta}{a_\ell}\right)^\frac{1}{q-2}+1\right) \exp\left(\left(\frac{\delta}{a_\ell}\right)^\frac{1}{q-2}\right)\right)\nonumber \\
 	& \lesssim & \delta \widetilde L   + \sum_{\ell=\widetilde L}^\infty a_\ell + \delta \sum_{\ell=\widetilde L}^\infty  \left(\left(\frac{\delta}{a_\ell}\right)^\frac{1}{q-2}+1\right) \exp\left(\left(\frac{\delta}{a_\ell}\right)^\frac{1}{q-2}\right),
 \end{eqnarray}
where we have chosen $\widetilde L$ so that $a_{\widetilde L+1} < \delta \leq a_{\widetilde L}$. 

Recall now that due to our assumptions we have
\begin{equation*}
	a_\ell \simeq \zeta^{1-q} \ell^{q(-t-s+\frac 12)-1} = \zeta^{1-q} \ell^{-r},
\end{equation*}
where we write $r = q(t+s-\frac 12)+1>\frac q2$.
Since $\delta \simeq a_{\widetilde L}$, our choice for $\widetilde L$ indicates that
\begin{equation*}
	\widetilde L \simeq \delta^{-\frac{1}{r}} \zeta^{-\frac{q-1}{r}}.
\end{equation*}
Now we are able to estimate all terms in \eqref{eq:e_est_by_three_terms}. First, we have
\begin{equation*}
	\sum_{\ell=\widetilde L}^\infty a_\ell \simeq 
	\zeta^{1-q} \int_{\widetilde L}^\infty \ell^{-r}d\ell
	= \zeta^{1-q} \frac{(\delta^{-\frac{1}{r}} \zeta^{\frac{1-q}{r}})^{1-r}}{r-1} \simeq \delta \widetilde L.
\end{equation*}
Second, by denoting  $\theta = \left(\frac{\delta}{\zeta^{1-q}}\right)^\frac{1}{q-2} $ we obtain
\begin{eqnarray*}
	\sum_{\ell=\widetilde L}^\infty \left( \frac \delta{a_\ell}\right)^\frac{1}{q-2} f\left(\left( \frac \delta{a_\ell}\right)^\frac{1}{q-2}\right) 
	& \simeq & \theta \int_{\widetilde L}^\infty x^\frac{r}{q-2} \exp\left(-\frac{\theta^2}{2} x^\frac{2r}{q-2}\right) dx \\
	& = & \theta^\frac{2-q}{r} \int_1^\infty y^\frac{r}{q-2} \exp\left(\frac{-y^\frac{2r}{q-2}}{2}\right) dy \simeq \delta^{-1/r} \zeta^\frac{1-q}{r} \simeq \widetilde L,
\end{eqnarray*}
where we applied a change of variables $y= \theta^\frac{q-2}{r} x$. 
Third, we notice similarly to the second case that 
\begin{equation*}
	\sum_{\ell=\widetilde L}^\infty  f\left(\left( \frac \delta{a_\ell}\right)^\frac{1}{q-2} \right)
	 \simeq   \int_{\widetilde L}^\infty  \exp\left(-\frac{\theta^2}{2} x^\frac{2r}{q-2}\right) dx \\
	\simeq  \theta^\frac{2-q}{r} \int_1^\infty  \exp\left(-\frac{y^\frac{2r}{q-2}}{2}\right) dy\simeq \widetilde L.
\end{equation*}
Finally, by applying the three estimates above to \eqref{eq:e_est_by_three_terms} we conclude that
\begin{equation*}
		\E e_{\delta,\zeta}(K^* N)  \lesssim \delta\widetilde L		\simeq \delta^{1-1/r} \zeta^\frac{1-q}{r},
\end{equation*}
where $r= q(t+s-1/2)+1$, which yields the claim.
\end{proof}

\begin{thm} 
\label{thm:wavelet_casep}
Let us assume that $K$ and $R$ are given as above, $s\geq \frac 1p - \frac{1}{2}$ and $t>\frac 12$. For the choice 
$\alpha \simeq \delta^\kappa$, where
\begin{equation*}
	\kappa =  \frac{4(s+t)}{4(s+t)+1}
\end{equation*}
we obtain the convergence rate
\begin{equation*}
	\E D_R^{\mu_\alpha^\delta,\mu^\dagger}(U_\alpha^\delta,u^\dagger) \lesssim \delta^{\kappa}.
\end{equation*}
\end{thm}

\begin{proof}
By combing Lemma \ref{lem:besov_p_e_estimate} with inequality \eqref{eq:p>1_first_breg_estimate} we have 
\begin{eqnarray}
\E D_R^{\mu_\alpha^\delta,\mu^\dagger}(u_\alpha^\delta,u^\dagger) &\lesssim &
	\alpha+ \inf_{\zeta>0} \left(
	\zeta^{\frac 2{2-p}} \left(\frac \delta\alpha\right)^{\frac 2{2-p}} 
	\left(1 +\frac \delta \alpha (\frac \delta 2)^{1-\frac 1r}\left(\frac{\alpha\gamma}{\delta}\right)^\frac{1-q}{r} \right)
	+ \frac \delta\alpha \delta^{1-1/r} \zeta^\frac{1-q}{r}\right) \nonumber \\
	& =: & \widetilde M_1+\widetilde M_2. \label{eq:thm_besov_p>1_first_est}
\end{eqnarray}
By setting $\alpha\simeq\delta^\kappa$ we can write 
\begin{align*}
\widetilde M_2 & \lesssim  \inf_{\zeta>0} \left(
	\zeta^\nu \delta^{\nu(1-\kappa)}
	\left(1 + \delta^{2+\frac{q-2}{r}-\kappa(1+\frac{q-1}{r})}\right)+\delta^{2-\kappa-\frac{1}{r}}\zeta^\frac{1-q}{r} \right)\\
	& = \left(1 + \delta^{2+\frac{q-2}{r}-\kappa(1+\frac{q-1}{r})}\right)^{\frac{q-1}{\nu r+q-1}}\delta^{\frac{\nu(1-\kappa)(q-1)+\nu r(2-\kappa-\frac{1}{r})}{\nu r+q-1}},
\end{align*}
where we have denoted $\nu=\frac{2}{2-p}$.
By our assumption $r>1$, that is, $2+\frac{q-2}{r}-\kappa(1+\frac{q-1}{r})>0$ and we obtain
\begin{align*}
M_2\lesssim \delta^{\frac{\nu(1-\kappa)(q-1)+\nu r(2-\kappa-\frac{1}{r})}{\nu r+q-1}}. 
\end{align*}
By optimizing the convergence rate in \eqref{eq:thm_besov_p>1_first_est} we have
\begin{align*}
\E D_R^{\mu_\alpha^\delta,\mu^\dagger}(u_\alpha^\delta,u^\dagger)  &\lesssim
\delta^{\frac{\nu(q+2(r-1))}{\nu(q+2r-1)+q-1}}. 
\end{align*}
Since $r= q(t+s-1/2)+1$ we can write 
\begin{align*}
\frac{\nu(q+2(r-1))}{\nu(q+2r-1)+q-1}=\frac{\nu(2q(t+s))}{\nu(2q(t+s)+1)+q-1}. 
\end{align*}
Furthermore since $\nu=\frac{2}{2-p}$ and $\frac{1}{p}+\frac{1}{q}=1$ we see that 
\begin{align*}
\frac{4q}{2-p}=\frac{4p}{(2-p)(p-1)}\quad \text{and}\quad \frac{2}{2-p}+q-1=\frac{p}{(2-p)(p-1)},
\end{align*}
 which yields the claim
\begin{align*}
\E D_R^{\mu_\alpha^\delta,\mu^\dagger}(u_\alpha^\delta,u^\dagger) \lesssim
\delta^{\frac{4(s+t)}{4(s+t)+1}}. 
\end{align*}
\end{proof}

\begin{rem}
If we assume $p=2$ then we can use the inequalities of Theorem 3.10 instead of Theorem 3.7 to attain  the same result. Note that if $p=2$ and the  exact source condition is assumed then $\mu^\dagger=(I-\Delta)^su^\dagger=K^*w$, where $w\in L^2(\T)$. This means that $u^\dagger\in H^{r}(\T)$, with $r=2s+t$, and we can write
\begin{align*}
\E D_R^{\mu_\alpha^\delta,\mu^\dagger}(u_\alpha^\delta,u^\dagger)  &=
\E \|u_\alpha^\delta-u^\dagger\|_{H^s(\mathbb{T})}^2\leq
\delta^{\frac{4(r-s)}{2r+2t+1}},
\end{align*}
which is the minimax rate \cite{Cavalier2008}. 
\end{rem}

\begin{rem}
We point out that minimax rates for linear statistical inverse problems in wavelet basis have been studied for estimators based on Galerkin methods and non-linear thresholding algorithms (see \cite{Donoho1995,cohen2004adaptive,hohage2016inverse} and references therein). In the first two papers the authors construct a finite-dimensional estimator $u_\delta$ for any $\delta>0$ such that
\begin{equation}
	\label{eq:galerkin_rate}
	\sup_{u\in B} \E \norm{u-u_\delta}^2_{L^2} \lesssim \left(\delta \sqrt{|\log \delta|}\right)^{\frac{4s}{2s+2t+1}},
\end{equation}
the forward operator $K$ is $t$ times smoothing (similar to \eqref{eq:besov_smooth_cond}) and 
\begin{equation}
	\label{eq:galerkin_methods_source}
	B = \{u \; | \; \norm{u}_{B^s_p} \leq C\}.
\end{equation}
Such rates are also known to be optimal \cite{cohen2004adaptive}.
Compared to \eqref{eq:galerkin_methods_source} our method builds upon a more general source condition. We do not necessarily require that the true solution is in the range of $K^*$ (if the range is defined as $K^* Y$ and not $K^*$ on a larger space including the noise). However, there is interplay between smoothness of $K$ and our source condition. In addition, the rate in \eqref{eq:galerkin_rate} is achieved in a $L^2$-norm, whereas the symmetric Bregman distance of $B^s_1$-norm in Theorem \ref{thm:wavelet} is not a norm, since it is not strictly positive and does not satisfy a triangle inequality. On the other hand the Bregman distance estimate can be used to provide structural properties related to sparsity, e.g. a bound on the norm of the wavelet coefficients of the reconstruction outside the support of the coefficients of $u^\dagger$ (cf. \cite{bur07}). We also mention that for the related approach wavelet soft-thresholding, where first a reconstruction $K^{-1}f^\delta$ is computed in a very large Besov space and then projected back by soft thresholding of the wavelet coefficients (cf. \cite{Donoho1995}), our approach can be used to provide analogous rates as in \cite{cohen2004adaptive} by applying the estimates to the variational regularization
$$ J_\alpha^\delta(u) = \frac{1}2 \Vert u \Vert_{L^2} - \langle (K^{*})^{-1} u, f^\delta \rangle + \alpha R(u), $$
where $R(u)$ is the associated Besov norm (weighted $\ell^1$-norm on wavelet coefficients). This is indeed a special case of our approach with definition $\Vert f \Vert_Y = \Vert K^{-1} f \Vert_{L^2}$, that is, $Y$ is the  space of elements where the latter norm is finite. Note that this corresponds naturally to a large noise case that cannot be treated with the existing theory. We also mention that some extensions to the case of $K$ not being injective are possible.
From our analysis and the form of the functional it becomes apparent that a necessary condition is that the true solution is in the range of $K^*$ and the associated subgradient is in $L^2$, which is indeed a rather weak condition. Our approach then yields $L^2$-estimates as in \cite{cohen2004adaptive} (corresponding to an estimate for $\Vert K(u-u^\dagger)\Vert^2_Y$, but in addition we also obtain an estimate in the Bregman distance providing information about the sparsity. 
It remains an interesting future question to provide more comparison between the Galerkin approach and variational methods.
\end{rem}

\subsection{Total Variation-type Regularization} 

In the following we discuss the case of total variation regularization 
\begin{equation}
\label{eq:TV_norm}
R(u) = \sup_{\varphi \in C_0^\infty(\T^d), \Vert \varphi \Vert_\infty \leq 1} \int_{\T^d}
\nabla \cdot \varphi u~dx,
\end{equation}
or related regularizations such as infimal convolutions with higher order total variation (cf. \cite{tvzoo} and references therein) in the case of spatial dimension $d\leq 2$, when there is an embedding into $X_0=L^2(\T^d)$. Thus, it is natural to use Hilbert space embedding in this case. We assume that $K$ can be extended to a $t>d/2+\epsilon$ times smoothing bijective bounded linear operator in Sobolev scale. We also assume that $N$ is white noise taking values in $H^{-d/2-\epsilon}$ as in the previous sections.  We will use the estimate \eqref{eq:embedding_estimate} for realizations of $K^* N$ (noting $L=K^*K$) and write
$$  
	e_{\delta,\zeta}(K^*N) \leq C \Vert L^{-\nu}K^* N  \Vert_{L^2(\T^d)}^{\frac{1}\nu}  \zeta^{-\frac{1-2\nu}{\nu}}\delta
$$
to obtain an estimate for the expectation $\E(e_{\delta,\zeta}(K^*N))$. Subsequently one could use similar reasoning as in the previous section respectively Section 3.1 to obtain full rates, which we leave to the reader. 
 
The key question for the finite expectation of $e_{\delta,\zeta}(K^*N)$ is the choice of $\nu$ such that
$$\E \Vert L^{-\nu}K^* N  \Vert_{L^2(\T^d)}^{\frac{1}\nu}  < \infty. $$
Note that by Fernique's theorem any moment of white noise is finite in $H^{-d/2-\epsilon}(\T^d)$ for any $\epsilon>0$ \cite{daprato}. Thus, we do not need to worry about the exponent $\frac{1}\nu$ in the expectation, but rather optimize $\nu$ to  have
$$  \Vert L^{-\nu}K^* N  \Vert_{L^2(\T^d)} \leq C \Vert N \Vert_{H^{-d/2-\epsilon}(\T^d)}. $$
With the above smoothing assumptions, we see that $K^*$ maps from $H^{-d/2-\epsilon}(\T^d)$ to $H^{t-d/2-\epsilon}(\T^d)$, hence we achieve the above rate estimate if $L^{-\nu}$ is bounded from $H^{t-d/2-\epsilon}(\T^d)$ to $L^2(\T^d)$. For $K$ being the inverse of a translation invariant differential or pseudo-differential operator one obtains that $L^\nu:L^2(\T^d)\rightarrow H^{2t\nu}(\T^d)$. 
In the following we write $K \in \Psi^{\rho}$ for a pseudodifferential operator $K$ if its symbol is in ${\mathcal S}^{\rho}(\T^d; \T^d)$ \cite{taylor1996pseudodifferential}. The condition above means that we should choose $\nu = \frac{t-d/2-\epsilon}{2t}$. 

As a specific example consider the pseudodifferential operator $K=(-\Delta + I)^{-1}$. Then $K$ is a  twice smoothing bijective operator between  $H^{-d/2-\epsilon}(\T^d)$ and $H^{2-d/2-\epsilon}(\T^d)$, which gives us $\nu = \frac{2-d/2-\epsilon}4$, i.e., one can choose $\nu$ arbitrarily close to $\frac{4-d}8$.

\begin{thm}
Let us assume that $K\in\Psi^{-t}$ where $t>d/2+\epsilon$ with some $\epsilon>0$, that is, $K$ is of order $t$ smoothing pseudodifferential operator. Regularization functional  $R$ is defined by \eqref{eq:TV_norm} and $\mu^\dagger$ satisfies the approximate source condition of order $r_1\geq 0$ in Assumption \ref{assumption:p=1}. Then for the choice $\alpha \simeq \delta^\kappa$ where
\begin{equation*}
\kappa =
\begin{cases}
\frac{1+r_1}{(2+r_1)(1-\nu)} & {\rm for}\; r_1 \leq \frac{d+2\epsilon}{t}\; {\rm and}\\
1 & {\rm else}\;
\end{cases}
\end{equation*}
we obtain the convergence rate
\begin{equation*}
	\E D_R^{\mu_\alpha^\delta,\mu^\dagger}(U_\alpha^\delta,u^\dagger) \lesssim 
\begin{cases}
\delta^{\frac{1}{(2+r_1)(1-\nu)}}\leq \delta^\frac{1}{2+r_1} & {\rm for}\; r_1 \leq \frac{d+2\epsilon}{t}\; {\rm and}\\
\delta^\frac{1}{1+r_1} & {\rm else}\;
\end{cases}
\end{equation*}
where $\nu=\frac{t-d/2-\epsilon}{2t}$.
\end{thm}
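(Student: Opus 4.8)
The plan is to treat this as the stochastic, one-homogeneous counterpart of Theorem~\ref{onehomthm1}, replacing the deterministic control of the noise term $e_{\delta,\zeta}(\eta)$ by its expectation, which we compute through the Hilbert space embedding estimate \eqref{eq:embedding_estimate}. Since the total variation functional \eqref{eq:TV_norm} is convex, symmetric and one-homogeneous, it falls under Example~\ref{ex:one-homogeneous}, so condition \eqref{bregmanscaling} holds with $\theta=0$ and $C_0(u,v)=R(u)+R(v)$ (the finite-dimensional nullspace of constants being eliminated as in the TV discussion of the cited literature, and the embedding into $X_0=L^2(\T^d)$ for $d\le2$ ensuring (R1)--(R4)). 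I would therefore start from the expectation of the estimate in Theorem~\ref{deterministicestimates}, written exactly as in \eqref{eq:besov_proof_1homog}, namely
$$ \E D_R^{\mu_\alpha^\delta,\mu^\dagger}(U_\alpha^\delta,u^\dagger) \lesssim \widetilde M_1 + \widetilde M_2, $$
with $\widetilde M_1,\widetilde M_2$ as in the Besov proof but with $\E e_{\delta,\zeta}(H)$ in place of $e_{\delta,\zeta}(\eta)$, where $H=K^*N$.

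The first substantive step is the noise estimate. Applying \eqref{eq:embedding_estimate} pointwise with $\mu=\nu$ and $\omega=L^{-\nu}K^*N$ gives, almost surely,
$$ e_{\delta,\zeta}(H) \leq C \Vert L^{-\nu}K^*N\Vert_{L^2(\T^d)}^{1/\nu}\, \zeta^{-\frac{1-2\nu}\nu}\,\delta. $$
For $\nu=\frac{t-d/2-\epsilon}{2t}$ the operator $L^{-\nu}K^*$ maps $H^{-d/2-\epsilon}(\T^d)$ boundedly into $L^2(\T^d)$, and Fernique's theorem ensures every moment of $\Vert N\Vert_{H^{-d/2-\epsilon}}$ is finite, so the random prefactor has finite expectation even after raising to the power $1/\nu$. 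Writing $s'=\frac{1-2\nu}\nu$ this yields $\E e_{\delta,\zeta}(H)\lesssim \delta\,\zeta^{-s'}$, the exact analogue of Lemma~\ref{lem:besov}. For the source term I would invoke Assumption~\ref{assumption:p=1} together with \eqref{eq:one_homog_edelta} to obtain $e_{\alpha,\zeta_1}(\mu^\dagger)\lesssim \alpha\,\zeta_1^{-r_1}$.

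With $\alpha\simeq\delta^\kappa$, $\kappa\le1$, I would first check that the auxiliary factor $\frac\delta\alpha\E e_{\delta,\alpha\gamma/\delta}(H)\lesssim \gamma^{-s'}\delta^{2+s'-\kappa(1+s')}\lesssim1$ stays bounded, so that $\widetilde M_1$ and $\widetilde M_2$ reduce to $\inf_{\zeta_1}\{\zeta_1+\delta^\kappa\zeta_1^{-r_1}\}$ and $\inf_{\zeta_2}\{\delta^{1-\kappa}\zeta_2+\delta^{2-\kappa}\zeta_2^{-s'}\}$ respectively. Lemma~\ref{lem:aux} then gives $\widetilde M_1\simeq\delta^{\kappa/(1+r_1)}$ and $\widetilde M_2\simeq\delta^{((1-\kappa)s'+2-\kappa)/(1+s')}$. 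Balancing the two exponents produces the optimal $\kappa=\frac{(1+r_1)(s'+2)}{(2+r_1)(1+s')}$, which collapses to the stated $\kappa=\frac{1+r_1}{(2+r_1)(1-\nu)}$ via the identities $s'+2=\tfrac1\nu$ and $1+s'=\tfrac{1-\nu}\nu$, whence $\frac{s'+2}{1+s'}=\frac1{1-\nu}$; the rate is then $\delta^{1/((2+r_1)(1-\nu))}\le\delta^{1/(2+r_1)}$. The restriction $r_1\le\frac{d+2\epsilon}t$ is a clean sufficient condition guaranteeing $\kappa\le1$ (indeed $\frac{d+2\epsilon}t\le s'$), and in the complementary regime the balanced choice would force $\kappa>1$, so I would instead fix $\kappa=1$, where $\widetilde M_1\simeq\delta^{1/(1+r_1)}$ dominates and yields the second branch.

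I expect the only genuinely delicate point to be the noise estimate: one must confirm that $\nu$ can be chosen in the admissible range $(0,\tfrac12)$ with $L^{-\nu}K^*$ bounded on the relevant Sobolev scale, and that the resulting random prefactor remains integrable after the exponent $1/\nu$ is applied. Once Fernique's theorem is invoked this is routine, and everything else is a direct repetition of the one-homogeneous optimization already carried out in Theorems~\ref{onehomthm1} and~\ref{thm:wavelet}.
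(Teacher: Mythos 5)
Your proposal is correct and follows essentially the same route as the paper's proof: the decomposition \eqref{eq:besov_proof_1homog} with $\theta=0$, the pointwise embedding estimate \eqref{eq:embedding_estimate} for $e_{\delta,\zeta}(H)$ with $\nu=\frac{t-d/2-\epsilon}{2t}$ made integrable via Fernique's theorem, and the optimization through Lemma \ref{lem:aux} with the balancing $\kappa=\frac{(1+r_1)(2+s')}{(2+r_1)(1+s')}=\frac{1+r_1}{(2+r_1)(1-\nu)}$. Your observation that $\frac{d+2\epsilon}{t}\leq s'$ makes the stated threshold merely a sufficient condition for $\kappa\leq 1$ (rather than the sharp one $r_1\leq s'$) is a point the paper glosses over in exactly the same way, so nothing in your argument departs from or falls short of the paper's.
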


\begin{proof}
Recall that
\begin{equation}
	\E D_R^{\mu_\alpha^\delta,\mu^\dagger}(U_\alpha^\delta,u^\dagger)
	\leq \widetilde M_1 + \widetilde M_2.
\end{equation}
where terms $\widetilde M_1$ and $\widetilde M_2$ are given in equation \eqref{eq:besov_proof_1homog}.
We have for a constant $\gamma $ and $\alpha \simeq \delta^\kappa$, $\kappa\leq 1$ that
\begin{equation*}
	\E e_{\delta, \frac{\alpha\gamma}{\delta}}(K^*N) \lesssim \gamma^{2-\frac 1 \nu} \delta^{\kappa + (\frac 1 \nu  -1)(1-\kappa)}
	\lesssim 1.
\end{equation*}
since $\frac 1 \nu > 2$.
Therefore, we obtain
\begin{equation*}
	\widetilde M_1 \lesssim \inf_{\zeta_1\in\R_+} \left\{\zeta_1 + \delta^\kappa \zeta_1^{-r_1}\right\}
	\simeq \delta^{\frac{\kappa}{1+r_1}}
\end{equation*}
and
\begin{equation*}
	\widetilde M_2 \lesssim \inf_{\zeta_2\in\R_+} \left\{\delta^{1-\kappa}\left(\zeta_2 + \delta \zeta_2^{-\frac{1-2\nu}{\nu}}\right) \right\} \simeq \delta^\frac{1-\kappa(1-\nu)}{1-\nu}.
\end{equation*}
If $r_1 \leq \frac{d+2\epsilon}{t}$ the convergence rate is minimized with $\kappa$ which satisfies
\begin{equation*}
	\kappa = \frac{1+r_1}{(2+r_1)(1-\nu)}.
\end{equation*}
For $r_1 \geq \frac{d+2\epsilon}{t}$ we choose $\kappa=1$ and consequently the claim holds.
\end{proof}

\section{Outlook to the Bayesian approach}
\label{sec:outlook}

In the Bayesian approach to inverse problems the model equation \eqref{eq:main_noisy} is often written in the form
\begin{equation}
\label{eq:bayes}
F_\delta=KU+\delta N
\end{equation}
where, in addition to the observational random noise $N$, we describe our prior beliefs about the unknown in terms of the probability distribution of the random variable $U:\Omega \to X$. 
The solution to the inverse problem is then the probability distribution of $U$ conditioned on a measurement outcome $F_\delta$.
The posterior distribution now provides means for uncertainty quantification. 

The analysis of small noise limit, in Bayesian case also known as the theory of posterior consistency, has attracted a lot of interest in the last decade. Posterior convergence rates were first studied in \cite{Ghosal2000,Shen2001}. In those two papers Gaussian noise and prior are assumed and the interest is on the convergence of the approximated solution $U_\delta^\alpha$, generated by a 'true' $u^\dagger$, to the same truth $u^\dagger$. Similar convergence or the contraction of the whole posterior distribution is further studied e.g. in papers \cite{Agapiou2013, Dashti13, Huang2004, Monard2017, Ray2013, Vollmer2013}. In \cite{kekkonen14,kekkonen15} Bayesian cost estimator similar to (\ref{eq:Bayesian_cost}) in Gaussian case is considered. 


A widely used approach to extract information from a posteriori distribution is to find so-called maximum a posteriori (MAP) estimator. In finite dimensional problems, the MAP estimate maximizes a posteriori probability density function and is, loosely speaking, the most probable solution to the problem \eqref{eq:bayes}. In the infinite-dimensional case, the MAP estimator is less understood. In certain probabilistic models, the MAP estimate is known to minimize a problem of type \eqref{minimisation}. We refer to our earlier work in \cite{HelinBurger15,HL11} and other authors in \cite{Dashti13, DS15} for more discussion on the topic. 
We point out that, in general, the connection between the estimator induced by \eqref{minimisation} and the MAP estimate is not well-established.
Despite this deficit, understanding the Bayes cost in such a case based on Bregman distance would be highly interesting for practical problems.

Our results in Theorem \ref{deterministicestimates} now directly yields that  
\begin{eqnarray}\label{eq:Bayesian_cost}
\E_{N,U}(	D_R^{\mu_\alpha^\delta,\mu}(U_\alpha^\delta,U) ) &\leq& 
 \E_N\Big(\E_{U}\Big(\inf_{(\zeta_1,\zeta_2) \in (\mathbb{R}^+)^2} (   \zeta_1 + \frac{\delta}\alpha \zeta_2 )^{1/(1-\theta)} C_\theta(U_\alpha^\delta ,U)^{1/(1-\theta)} + \nonumber \\ &&\qquad \qquad \frac{1}{1-\theta}
 e_{\alpha,\zeta_1}(M)+ \frac{\delta}{\alpha(1-\theta)} e_{\delta,\zeta_2}(K^*N)\Big)\Big),
\end{eqnarray}
where $M:\Omega \to X^*$ formally satisfies $M(\omega) \in \partial R(U(\omega))$.
The Bayes cost for the MAP estimate, however, is not a straightforward matter since the subgradient set $\partial R(U)$ is not necessarily well-defined. Consider a Gaussian prior $U$ in a Hilbert space $X$ with zero-mean and covariance $C_U : X \to X$. In such a case, the functional $R$ induced by the prior satisfies $R(u) = \norm{C_U^{-1/2} u}^2_X$, i.e., $R$ coincides with the norm of the Cameron--Martin space. On the other hand, realizations of $U$ are in the Cameron--Martin space with probability zero.
Similarly, expectation over $R$ and Bregman distance in \eqref{eq:Bayesian_cost} are not bounded.

It is know from the earlier work \cite{kekkonen15} by the last author that
in Gaussian setting the Bregman distance based Bayes cost can be estimated using a weaker norm than the one induced by the prior. Hence an intriguing question for future work is to characterize functional $R$ for which the Bayes cost (and the bound) in \eqref{eq:Bayesian_cost} makes sense.

Let us finally comment that in a purely Bayesian approach the prior information should be independent of the measurement $F_\delta$. 
For instance, MAP estimate of problem \eqref{eq:main_noisy} for a $\delta$-independent prior and a noise distribution $\delta N$ with white noise $N$ formally correspond to an estimator \eqref{minimisation} where $\alpha$ is replaced by $\alpha \delta^2$ for a constant $\alpha$. In literature this principle is occasionally omitted and general a priori rules $\alpha=\alpha(\delta)$ are considered.  Such an approach resembling the frequentist method leads to 'priors' that are scaled with respect to the noise level $\delta$ and hence no longer independent of the measurement. With general $\alpha(\delta)$ the minimisation problem \eqref{minimisation} can not be seen as a proper MAP estimate. However, it is a useful estimator to study since with constant $\alpha$ we often do not get convergence in the original space.

\section*{Acknowledgements}  
This work has been supported by the German Science Exchange Foundation DAAD via Project
57162894, Bayesian Inverse Problems in Banach Space.
MB acknowledges further support by  ERC via Grant EU FP 7 - ERC Consolidator Grant 615216 LifeInverse. 
TH and HK were supported by Academy of Finland via grants 275177 and 285463 and Finnish Centre of Excellence in Inverse Problems Research 2012-2017 (CoE-project 284715), respectively.
HK was further supported by Emil Aaltonen Foundation and EPSRC via Grant EP/K034154/1. The authors thank Peter Mathe (WIAS Berlin) for useful links to literature.

\appendix

\section{Convex Conjugates}\label{sec:convexconjugates}

For completeness we recall  the convex conjugate $R^\star: X^* \rightarrow \R \cup \{\infty \} $ defined via
\begin{equation}
	R^\star(q) = \sup_{u \in X} \left( \langle q, u \rangle_{X^* \times X} - R(u) \right).
\end{equation}
Note that by definition of $R^\star(q)$ we obtain the following well-known generalization of Young's inequality 
\begin{equation}
	 \langle q, u \rangle_{X^* \times X} \leq R(u) + R^\star(q),
\end{equation}
for all $u \in X$ and $q\in X^*$, which we employ at several instances throughout the paper.

\bibliographystyle{abbrv}

\bibliography{references}

\end{document}